\newtheorem*{thma}{Théorème A}
\newtheorem*{thmb}{Théorème B}
\theoremstyle{plain}
\newtheorem{thm}{Théorème}
\newtheorem{lem}[thm]{Lemme}
\newtheorem{cor}{Corollaire}
\theoremstyle{remark}
\newtheorem{rem}{Remarque}
\def\QQ{\mathbb{Q}}
\def\ZZ{\mathbb{Z}}
\begin{document}

\title{Capitulation des $2$-classes d'idéaux de type $(2, 4)$}

\author{Abedelmalek. Azizi}
\thanks{Recherche soutenue par l'Académie Hassan II des Sciences et
Techniques et l'URAC6, Maroc.}
\address{Département de Mathématiques\\
Faculté des Sciences\\
Université Mohammed 1\\
Oujda\\
Maroc}
\email{abdelmalekazizi@yahoo.fr}
\author{Mohammed. Taous}
\address{Département de Mathématiques, Faculté des Sciences et Techniques, Université Moulay Ismail, Errachidia, Maroc.}
\email{taousm@hotmail.com}

\keywords{$2$-groupe métacyclique, capitulation, corps de classes de Hilbert}

\subjclass[2010]{11R11, 11R29, 11R32, 11R37, 20F05}

\maketitle

\begin{abstract}
In this paper, we establish two main results which give conditions
necessary and sufficient for a $ 2 $-group metabelian such that $G/G'$ is of type $(2, 4)$ either metacyclic or not. If $G$ is the Galois group of $\mathbf{k}_2^{(2)}/\mathbf{k}$  where $\mathbf{k}_2^{(2)}$ is the  the Second Hilbert $2$-class field of a number field $\mathbf{k}$, we will get results on the problem of capitulation.

\end{abstract}

\selectlanguage{francais}

\begin{abstract}
 Dans ce travail, nous allons établir deux résultats principaux qui donnent les conditions
nécessaires et suffisantes pour que un $2$-groupe métabélien tel que $G/G'$ est de type $(2, 4)$, soit métacyclique ou non. Si $G$ est le groupe de Galois $\mathbf{k}_2^{(2)}/\mathbf{k}$, où $\mathbf{k}_2^{(2)}$ le deuxième $2$-corps de classes de Hilbert d'un corps de nombre $\mathbf{k}$, nous obtiendrons des résultats sur le problème de capitulation.
\end{abstract}

\section{Introduction}
Soient $x$, $y$ et $z$ des éléments d'un groupe $G$. \textit{le
commutateur} de $x$ et de $y$ est l'élément: $$[x,\,
y]=x^{-1}y^{-1}xy=x^{-1}x^y.$$ On peut facilement montrer
les propriétés suivantes :
\begin{align}
[xy, z]& = [x, z]^y[y, z].\label{JNT4}\\
[x ,yz]& = [x, z][x, y]^z\label{JNT5}.
\end{align}
Si $X$ et $Y$ sont deux sous-ensembles de $G$, on désigne par
$[X,\,Y]$ le groupe engendré par les commutateurs $[x, y]$ où $x\in
X$  et $y\in Y$; et  $G'=[G, G]$ désignera \textit{le groupe dérivé}
(ou \textit{le groupe des commutateurs}) de $G$ et par $\gamma_i(G)$
le i-ème terme de la série centrale descendante de $G$ définie par
$\gamma_1(G)=G$ et $\gamma_{i+1}(G)=[\gamma_i(G), G]$. On dit que $G$
est \textit{nilpotent} s'il existe un entier $c$ tel que
$\gamma_{c+1}(G)=1$. Le plus petit entier qui vérifie cette égalité
est appelé \textit{classe de nilpotence}  de $G$. Il est bien connu
qu'un $p$-groupe $G$ est nilpotent. On appelle \textit{exposant}
d'un groupe abélien le plus grand des ordres de ses éléments. Rappelons qu'un groupe $G$ est
\textit{métabélien} si son groupe dérivé $G'=[G,
G]$ est abélien est \textit{métacyclique} si il possède
un sous-groupe cyclique normal $H$, tel que le quotient $G/H$ est
cyclique. notons aussi par $d(G)$, Le rang de $G$, c'est est le nombre minimal
de générateurs de $M$\\
\indent Dans la section \ref{JNT1}, nous allons établir deux résultats principaux (théorème \ref{2:015} p. \pageref{2:015} et théorème \ref{2:014} p. \pageref{2:014}),  qui donnent les conditions nécessaires et suffisantes pour que  un $2$-groupe métabélien tel que $G/G'$ est de type $(2, 4)$ (c'est-à-dire isomorphe à $\ZZ/2\ZZ\times\ZZ/4\ZZ$),
soit métacyclique ou non. cette nouvelle caractérisation est diffèrent à celle donner par N. Blackburn (théorème \ref{017}); elle va nous aider à étudier le problème de capitulation des $2$-classes d'idéaux de type $(2, 4)$. Le résultat principal est le théorème suivant:
\begin{thma}
Soient $G$ un $2$-groupe tel que $G/G'$ est de type $(2, 4)$ et $M$
le sous-groupe maximal de $G$ tel que $M/G'$ est de type $(2, 2)$.
Alors $M/M'$ est de type $(2, 2, 2)$ ou $(2,2^m)$. Plus précisément, les propriétés suivantes sont équivalentes:
\begin{enumerate}[\rm\indent 1.]
\item $G$ est métacyclique;
\item $M/M'$ est de type $(2, 2^m)$ avec $m\geq 1$;
\item $d(M)=2$.
\end{enumerate}
\end{thma}
\indent Soient $\mathbf{k}$ un corps de nombres de degré fini sur $\QQ$, $p$
un nombre premier, $\mathrm{C}_\mathbf{k}$ (resp. $\mathrm{C}_{\mathbf{k}, p}$) le
groupe (resp. le $p$-groupe) de classes de $k$. On note
$\mathbf{k}^{(1)}$ le corps de classes de Hilbert de $\mathbf{k}$ au
sens large. Soit  $\mathbf{k}^{(n)}$ (pour $n$ un entier naturel) la
suite de corps de classes de Hilbert définie par:
$\mathbf{k}^{(0)}=\mathbf{k}$ et
$\mathbf{k}^{(n+1)}=(\mathbf{k}^{(n)})^{(1)}$. Nous définissons de
la même façons la tour des $p$-corps de classes de Hilbert de
$\mathbf{k}$. Il suffit de remplacer les $\mathbf{k}^{(n)}$ par les
$\mathbf{k}^{(n)}_p$ et les corps par les $p$-corps. Soient
$\mathbb{M}$ une extension cyclique non ramifiée de $\mathbf{k}$,
$\mathrm{C}_{\mathbf{k},\mathbb{M}}$ le sous-groupe de $\mathrm{C}_{\mathbf{k}}$
associé à $\mathbb{M}$ par la théorie du corps de classes, et
$j_{\mathbf{k} \rightarrow \mathbb{M}}$ le morphisme de
$\mathrm{C}_{\mathbf{k}}$ vers $\mathrm{C}_{\mathbb{M}}$ qui fait correspondre à la
classe d'un idéal $\mathcal{A}$ de $\mathbf{k}$ l'idéal engendré par
$\mathcal{A}$ dans $\mathbb{M}$ et
$\mathcal{N}_{\mathbb{M}/\mathbf{k}}$ la norme de
$\mathbb{M}/\mathbf{k}$. Taussky dit que
\begin{enumerate}[\indent$\bullet$]
\item  M est de type $(A)$ $\Leftrightarrow$ $|\ker(j_{\mathbf{k} \rightarrow \mathbb{M}})\cap\mathcal{N}_{\mathbb{M}/\mathbf{k}}(\mathrm{C}_{\mathbb{M}})|>1$,
\item  M est de type $(B)$ $\Leftrightarrow$ $|\ker(j_{\mathbf{k}
\rightarrow
\mathbb{M}})\cap\mathcal{N}_{\mathbb{M}/\mathbf{k}}(\mathrm{C}_{\mathbb{M}})|=1$.
\end{enumerate}
D'après la théorie du corps de classes le noyau de $j_{\mathbf{k} \rightarrow \mathbb{M}}$ s'identifie au noyau du transfer ${\rm V}_{G\rightarrow H}: G/G'\rightarrow H/H'$ où $G=\mathrm{Gal}(\mathbf{k}_2^{(2)}/\mathbf{k})$ et $H=\mathrm{Gal}(\mathbb{M}_2^{(2)}/\mathbb{M})$. Pour calculer le noyau du transfer ${\rm V}_{G\rightarrow H}$, on va utiliser la formule suivante (\cite{Mi89}): Pour $g\in G$, si on pose $f=[\langle
g\rangle.H:H]$ et  $\{x_1, x_2,
\ldots,x_t\}$ un ensemble de représentants de $G/\langle
g\rangle H$, alors on a
\begin{equation}\label{JNT7}
{\rm V}_{G\rightarrow H}(gG')=\prod_{i=1}^tx_i^{-1}g^fx_i.H'.
\end{equation}
\indent Si le $2$-groupe de classes de $\mathbf{k}$ est de type $(2, 4)$, la structure de $G$ est classiquement peut déterminer dans des cas particuliers, en étudiant le type de capitulation de $\mathbf{k}$ dans les trois extensions quadratiques non ramifiées de $\mathbf{k}$. Pour cela il est nécessaire de trouver le système fondamentale de ces corps, ce qui n'est pas toujours possible. Mais dans plusieurs cas, le type de capitulation nous ne donne pas des informations sur  $G$. Comme $G/G'$ est de type $(2, 4)$, nous appliquons le théorème A et nous trouvons un nouveau critère utile pour déterminer le type de $G$:
\begin{thmb}
On garde les notations précédentes. Supposons que $G$ est d'ordre $>16$, alors les propriétés suivantes sont équivalentes:
\begin{enumerate}[\rm\indent 1.]
\item $G$ est métacyclique;
\item Le $2$-groupe de classes de $\mathbf{K_{3,  2}}$ est de type $(2, 2^m)$ avec $m\geq 3$;
\item Le $2$-nombre de classes de $\mathbf{K_{3,  2}}$ est $> 8$;
\item Le $2$-groupe de classes de $\mathbf{K_{1,  4}}$ est cyclique d'ordre $>2$.
\end{enumerate}
\end{thmb}
Contrairement à la méthode traditionnelle, nous avons caractériser la métacycliqulicité de $G$. Par exemple dans le travail de E. Benjamin, C. Snyder \cite{BeSn94}, si $|\ker(j_{\mathbf{k} \rightarrow \mathbf{K}_{i,  2}})\cap\mathcal{N}_{\mathbb{K}_{i,  2}/\mathbf{k}}(\mathrm{C}_{\mathbb{K}_{i,  2}})|=2$, pour chaque $\mathbb{K}_{i,  2}$ extension quadratique non ramifiée de $\mathbf{k}$ (on dit que la capitulation est de type $2A2A2A$), nous ne pouvons rien dire sur la structure de $G$. Pour nous si le $2$-rang du groupe de classes de $\mathbf{K_{3,  2}}$ est égal à $2$, nous obtiendrons que $G$ est métacyclique, puis nous calculons des noyaux de transfer, ce qui nous permet de résoudre le problème de capitulation d'une façons complète.
\section{Sur les $2$-groupes dont l'abélianisé est de type $(2, 4)$}
\label{JNT1}
Nous commençons par donner un théorème qui nous sera utile par la suite: N. Blackburn a montré dans \cite{Bl581} que :
\begin{thm} \label{2:005}
Soit $G$ un $p$-groupe. Si $G/G'\simeq (p^n, p^m)$ tel que $n\leq
m$, alors
\begin{enumerate}[\indent\rm 1.]
\item $\gamma_2(G)/\gamma_3(G)$ est cyclique d'ordre $\leq$ à $p^n$;
\item l'exposant de $\gamma_{i+1}(G)/\gamma_{i+2}(G)$ divise l'exposant de
$\gamma_i(G)/\gamma_{i+1}(G)$.
\end{enumerate}
\end{thm}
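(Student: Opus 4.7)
The plan is to prove both parts by exploiting the bilinearity of the commutator map modulo deeper terms of the lower central series of $G$.

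For part (1), since $G/G'$ has rank $2$, the Burnside basis theorem gives $G=\langle a,b\rangle$ with $\bar a,\bar b$ of orders $p^n$ and $p^m$ in $G/G'$. Reducing the identities \eqref{JNT4} and \eqref{JNT5} modulo $\gamma_3(G)=[\gamma_2(G),G]$, every ``parasitic'' factor $[[x,z],y]$ (with $[x,z]\in\gamma_2(G)$) lies in $\gamma_3(G)$, so the commutator induces a well-defined biadditive pairing
$$[\,,\,]\colon G/G'\times G/G'\longrightarrow \gamma_2(G)/\gamma_3(G)$$
(descent to $G/G'$ on each factor uses $[G',G]\subseteq\gamma_3(G)$). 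Its image generates the target, which is therefore cyclic, generated by the class of $[a,b]$. To bound the order, bilinearity gives $[a^{p^n},b]\equiv [a,b]^{p^n}\pmod{\gamma_3(G)}$; but $a^{p^n}\in G'$, so $[a^{p^n},b]\in [G',G]=\gamma_3(G)$, whence $[a,b]^{p^n}\in\gamma_3(G)$ and $\bigl|\gamma_2(G)/\gamma_3(G)\bigr|\leq p^n$.

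For part (2), I would establish, by induction on $i$, a surjective bilinear map
$$\gamma_i(G)/\gamma_{i+1}(G)\otimes_{\ZZ} G/G'\twoheadrightarrow \gamma_{i+1}(G)/\gamma_{i+2}(G),\qquad (\bar x,\bar g)\longmapsto \overline{[x,g]}.$$
Surjectivity is immediate from $\gamma_{i+1}(G)=[\gamma_i(G),G]$. Well-definedness and bilinearity both rest on the standard inclusion $[\gamma_i(G),\gamma_j(G)]\subseteq\gamma_{i+j}(G)$: it makes the pairing vanish on $\gamma_{i+1}(G)\times G$ and on $\gamma_i(G)\times G'$, and it forces the parasitic terms arising from \eqref{JNT4} and \eqref{JNT5} into $\gamma_{i+2}(G)$. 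Since the exponent of $A\otimes_{\ZZ} B$ divides $\gcd(\exp A,\exp B)$ for finite abelian $A,B$, it follows that $\exp\bigl(\gamma_{i+1}(G)/\gamma_{i+2}(G)\bigr)$ divides $\exp\bigl(\gamma_i(G)/\gamma_{i+1}(G)\bigr)$, as required.

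The main obstacle is a clean proof of $[\gamma_i(G),\gamma_j(G)]\subseteq\gamma_{i+j}(G)$, on which both parts rest. This classical fact is deduced from P. Hall's three-subgroup lemma (equivalently, the Hall--Witt identity) together with an induction on $i+j$; once it is available, everything else reduces to the direct manipulations with \eqref{JNT4} and \eqref{JNT5} sketched above.
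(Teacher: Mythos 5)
Your argument is correct, but there is nothing in the paper to compare it with: Theorem \ref{2:005} is not proved there, it is quoted as a result of N.~Blackburn (cited from \cite{Bl581}), so any proof you give is necessarily your own route. What you propose is the standard modern argument and it holds up. For part (1), the commutator pairing $G/G'\times G/G'\to\gamma_2(G)/\gamma_3(G)$ is indeed well defined and biadditive (the parasitic conjugation terms from \eqref{JNT4} and \eqref{JNT5} lie in $[\gamma_2(G),G]=\gamma_3(G)$, and descent uses $[G',G]=\gamma_3(G)$), its image generates $\gamma_2(G)/\gamma_3(G)$ since $G'$ is generated by commutators, and with $G=\langle a,b\rangle$ this quotient is generated by the class of $[a,b]$; the bound $[a,b]^{p^n}\equiv[a^{p^n},b]\equiv 1\bmod\gamma_3(G)$ then gives order at most $p^n$. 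For part (2), the surjection from $\bigl(\gamma_i(G)/\gamma_{i+1}(G)\bigr)\otimes_\ZZ\bigl(G/G'\bigr)$ onto $\gamma_{i+1}(G)/\gamma_{i+2}(G)$ is the classical mechanism and yields the divisibility of exponents as you state. Two small remarks: the inclusion you single out as the main obstacle is needed only in the cases $[\gamma_i(G),\gamma_2(G)]\subseteq\gamma_{i+2}(G)$ (one application of the three-subgroup lemma) and $[\gamma_{i+1}(G),\gamma_i(G)]\subseteq[\gamma_{i+1}(G),G]=\gamma_{i+2}(G)$, which is trivial, so the burden is lighter than the full $[\gamma_i(G),\gamma_j(G)]\subseteq\gamma_{i+j}(G)$; and no induction on $i$ is really needed in part (2) beyond that inclusion -- the construction is uniform in $i$.
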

Si $G$ est un groupe non réduit à un élément, un sous-groupe $H$ de
$G$ est dit \textit{maximal} si le seul sous-groupe de $G$, distinct
de $G$ et contenant $H$ est $H$ lui-même. Le \textit{sous-groupe de
Frattini} $\Phi(G)$ d'un groupe G est l'intersection de tous ses
sous-groupes maximaux. Par exemple si $G$ est un $2$-groupe, alors
$\Phi(G)=G^2$ (\cite{Hu67}). Si
$G$ désigne un $2$-groupe d'ordre $2^n$ tel que
$\gamma_1(G)/\gamma_2(G)$ est de type (2, 4), la situation est schématisée par le
diagramme suivant :
\begin{figure}[H]
$$ \xymatrix{
   & \gamma_3(G) \ar@{->}[d] & \\
   & G'\ar@{->}[ld]\ar@{->}[d]\ar@{->}[rd]\\
  N_1\ar@{->}[rd]&\ar@{->}[ld] N_3\ar@{->}[d]\ar@{->}[rd]  & N_2\ar@{->}[ld]\\
  H\ar@{->}[rd]& M \ar@{->}[d]  & K\ar@{->}[ld]\\
  &G
  }
$$
\caption{\label{Fig1}}
\end{figure}

Nous allons montrer dans cette section des nouveaux résultats pour
ce groupe $G$, mais nous allons tout d'abord rappeler brièvement des
résultats pour le cas où $G'$ est cyclique non-trivial. Soit $c$ la
classe de nilpotence de G, alors
\begin{equation}
|G|=[G:G']\prod_{i=2}^{c}[\gamma_{i}(G):\gamma_{i+1}(G)].
\end{equation}
Le théorème précédent entraîne que
$[\gamma_{i}(G):\gamma_{i+1}(G)]=2$ pour $2\leq i\leq c$, par
conséquent $c=n-2$. Rappelons qu'un tel groupe qui vérifie cette
égalité est dite de \textit{classe presque maximale}. D'autre part,
C. Baginski et A. Konovalov, ont donné dans \cite{BK04} la liste
complète de ces groupes en fonction de leurs générateurs et suivant
certaines relations. parmi eux un théorème décrit tout les groupes $G$ qui sont métacycliques. Le nombre de groupes $G$ métacycliques d'ordre $2^n$,
$n\geq 5$ où $G/G'$ est de type $(2, 4)$ est égal à :$$\left\{
                    \begin{tabular}{ll}
                      $3$, & \text{si} $n=5$; \\
                      $4$, & \text{si} $n > 5$.
                    \end{tabular}
                  \right.$$
Ils sont donnés par la représentation suivante : $$ G_m=\langle a,
b :\, a^{2^{n-2}} = 1,\, b^4 = z_1,\, a^b = a^{-1}z_2\rangle,$$ où
$1\leq m \leq 4$ et les valeurs de $z_1$ et de $z_2$ sont données
dans le tableau suivant (pour  $G_{4}$ nous avons $n > 5$).
\begin{table}[H]
\begin{center}
\begin{small}
\begin{tabular}{ccccc}
\hline
        & $G_{1}$ & $G_{2}$      & $G_{3}$      & $G_{4}$           \\
\hline
 $z_1$  & 1        &  1            & $a^{2^{n-3}}$ & 1              \\
 $z_2$  & 1        &  $a^{2^{n-3}}$&  1            & $a^{2^{n-4}}$  \\
\hline
\end{tabular}
\caption{\label{Tabl3}}
\end{small}
\end{center}
\end{table}
Pour la preuve de ce résultat, l'auteur peut voir \cite{Ja75},
Théorème 5.3, p. 352. En fin rappelons que si $G$ un $2$-groupe
métacycliques d'ordre 16 tel que $G/G'$ est de type $(2, 4)$, alors $G$ est égal à
$$ M_{16}=\langle a ,b : a^8=b^2=1, a^b=a^{5}\rangle  \text{ ou } G_{1}=\langle a ,b : a^4=b^4=1, a^b=a^{-1}\rangle.$$
\subsection{Sur les $p$-groupe $G$ dont l'abélianisé est
de type $(p^n, p^m)$} Dans cette sous-section nous donne quelque
résultats sur les $p$-groupe $G$ dont l'abélianisé est de
type $(p^n, p^m)$ dûs à Blackburn \cite[p. 334 et 335]{Bl58}. Un
tel groupe $G$ est engendré par deux éléments $a$ et $b$
tels que $a^{p^n}\equiv b^{p^m}\equiv 1\mod \gamma_2(G)$
(Théorème de la base de Burnside).

\begin{thm}\label{017}
Un $p$-groupe $G$ est métacyclique si et seulement si
$G/\Phi(G')\gamma_3(G)$ est
métacyclique.
\end{thm}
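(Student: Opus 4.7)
The forward direction is immediate: metacyclicity is preserved by quotients, so if $G$ is metacyclic, then so is $G/\Phi(G')\gamma_3(G)$. All the content lies in the converse, to which I now turn. Set $N := \Phi(G')\gamma_3(G)$ and assume $G/N$ is metacyclic. First I would observe that $N = (G')^p\gamma_3(G)$, since $G'' = [G', G'] \subseteq [G', G] = \gamma_3(G)$ forces $\Phi(G') = (G')^p[G', G'] \subseteq (G')^p\gamma_3(G)$.

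Next I would extract two structural consequences of the hypothesis. Firstly, $G/G'$ is a quotient of the metacyclic group $G/N$, hence it is $2$-generated; by the Burnside basis theorem, $G$ itself is $2$-generated. Secondly, the derived subgroup $(G/N)' = G'/N$ is cyclic, being the derived subgroup of a metacyclic group. Observing that $G'/\gamma_3(G)$ is abelian (because $G'' \subseteq \gamma_3(G)$), with Frattini subgroup exactly $N/\gamma_3(G)$, Burnside applied to this abelian $p$-group shows that $G'/\gamma_3(G)$ is itself cyclic. Combined with Theorem \ref{2:005}, each successive quotient $\gamma_i(G)/\gamma_{i+1}(G)$ for $i \geq 2$ is then cyclic, with exponents decreasing along the lower central series.

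My plan to conclude is induction on $|G|$. If $N = 1$, there is nothing to prove. Otherwise, since $N$ is a non-trivial normal subgroup of the $p$-group $G$, one has $N \cap Z(G) \neq 1$, and I can pick a central subgroup $K \leq N$ of order $p$. A short computation, using $K \subseteq N \subseteq G'$, gives $\Phi((G/K)')\gamma_3(G/K) = N/K$, so $(G/K)/\Phi((G/K)')\gamma_3(G/K) \simeq G/N$ is metacyclic, and the induction hypothesis yields that $G/K$ is metacyclic.

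The main obstacle is then the lifting step: from a normal cyclic subgroup $\bar A \triangleleft G/K$ with cyclic quotient, produce a normal cyclic subgroup $A \triangleleft G$ with cyclic quotient. The preimage $A \triangleleft G$ is a central extension of the cyclic group $\bar A$ by $K \simeq \ZZ/p\ZZ$ and is therefore abelian of rank at most $2$, so either $A$ is already cyclic or $A \simeq \bar A \times K$. The delicate point is to exploit the precise hypothesis $K \subseteq \Phi(G')\gamma_3(G)$ --- and not merely $K \subseteq G'$ --- to rule out the split case: one must show that an appropriate choice of $\bar A$, leveraging the cyclicity of $G'/\gamma_3(G)$ established above and the $2$-generated structure of $G$ to produce a lift $u$ whose $p^s$-th power lands non-trivially in $K$, forces $K \subseteq \langle u\rangle$, so that the candidate cyclic subgroup is actually cyclic. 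This careful choice of lift is the step I expect to be technically hardest.
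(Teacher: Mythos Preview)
The paper does not supply a proof of this theorem: it is quoted, with attribution to Blackburn \cite{Bl58}, as one of several results imported from that reference, and is used thereafter as a black box. There is therefore no argument in the paper against which to compare your proposal.

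As a standalone assessment, your outline is a reasonable induction scheme and you have correctly isolated the lifting step as the heart of the matter. Two remarks. First, the observations in your steps 3--5 are largely automatic: once $G$ is $2$-generated, $\gamma_2(G)/\gamma_3(G)$ is already cyclic (it is generated by the image of a single commutator $[a,b]$), so Theorem~\ref{2:005} does the rest without needing the metacyclicity of $G/N$ at that stage. Second, and more importantly, the lifting step you flag as ``technically hardest'' is indeed where all the content lies, and your sketch does not yet close it. Knowing only that $K \subseteq (G')^p\gamma_3(G)$ and that $G' \subseteq A$, one obtains $(G')^p \subseteq A^p$, but there is no immediate reason for $\gamma_3(G) \subseteq A^p$; in particular for $p=2$ one cannot argue via the automorphism group of a cyclic normal subgroup that $[A,G] \subseteq A^2$. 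The ``appropriate choice of $\bar A$'' you allude to must therefore be made explicit, and this is exactly the delicate point Blackburn handles in \cite{Bl58}; his argument proceeds somewhat differently, building the cyclic normal subgroup from the bottom up rather than lifting from a quotient. You would do well to consult that source directly.
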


\begin{lem}\label{2:021}
Si $G$ est un $p$-groupe non abélien engendré par deux
générateurs, alors $\Phi(G')\gamma_3(G)$ est
le seul sous-groupe maximal de $G'$ normal dans
$G$.
\end{lem}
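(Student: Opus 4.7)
The plan is to set $N_0 := \Phi(G')\gamma_3(G)$ and to establish two things: first, that $N_0$ is a normal subgroup of $G$ of index exactly $p$ in $G'$; second, that any maximal subgroup of $G'$ normal in $G$ coincides with $N_0$.

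For the first point, I would observe that $\Phi(G') = (G')^p [G', G']$ is characteristic in $G'$ and that $\gamma_3(G) = [G', G]$ is normal in $G$; since $G'$ itself is normal in $G$, it follows that $N_0$ is normal in $G$. To compute $[G' : N_0]$, the main ingredient is the fact that for a $2$-generator $p$-group $G = \langle a, b\rangle$, the quotient $G'/\gamma_3(G)$ is cyclic, generated by the class of $[a, b]$. I would deduce this directly from the identities \eqref{JNT4} and \eqref{JNT5}, which modulo $\gamma_3(G)$ make the commutator bimultiplicative in each variable, so that every commutator $[g, h]$ of elements with $g \equiv a^i b^j$ and $h \equiv a^k b^\ell$ modulo $G'$ reduces to $(i\ell - jk)[a,b]$ modulo $\gamma_3(G)$ (this is also a consequence of Theorem \ref{2:005}). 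Consequently $G'/N_0$ is simultaneously a quotient of the cyclic group $G'/\gamma_3(G)$ and of the elementary abelian $p$-group $G'/\Phi(G')$, so its order is $1$ or $p$. The case $|G'/N_0| = 1$ would mean $G' = \Phi(G')\gamma_3(G)$; applying Burnside's basis theorem to $G'$ then gives $G' = \gamma_3(G) = [G', G]$, hence $G' = \gamma_k(G)$ for every $k \geq 2$, contradicting the nilpotence of $G$ together with the hypothesis $G' \neq 1$.

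For uniqueness, let $N$ be any maximal subgroup of $G'$ that is normal in $G$. The inclusion $\Phi(G') \subseteq N$ holds by the very definition of $\Phi(G')$ as the intersection of the maximal subgroups of $G'$. To obtain $\gamma_3(G) \subseteq N$, I would use that $G'/N$ is cyclic of order $p$, and that conjugation by $G$ defines a homomorphism $G \to \mathrm{Aut}(G'/N)$ whose target has order $p - 1$; since $G$ is a $p$-group, this homomorphism is trivial, which gives $[G', G] \subseteq N$. Combining both inclusions with $[G' : N] = [G' : N_0] = p$ forces $N = N_0$.

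The only delicate point is really the cyclicity of $G'/\gamma_3(G)$; the rest of the argument is a direct combination of Burnside's basis theorem and the order-$(p-1)$ argument on $\mathrm{Aut}(\ZZ/p\ZZ)$, both of which are standard for $p$-groups.
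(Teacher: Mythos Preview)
Your argument is correct. Both steps are sound: the index computation via cyclicity of $G'/\gamma_3(G)$ together with Burnside's basis theorem, and the uniqueness via the $\mathrm{Aut}(\ZZ/p\ZZ)$ order argument, are exactly what is needed.

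Note, however, that the paper does not supply its own proof of this lemma: it is quoted as one of several results ``d\^us \`a Blackburn'' from \cite[p.~334--335]{Bl58}, stated without demonstration and then used in the proof of Corollary~\ref{JNT3}. So there is no in-paper proof to compare against; your write-up effectively reconstructs the classical argument that the paper takes for granted.
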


\begin{lem}
Si $G$ est $p$-groupe non métacyclique tel que
$G/G'$ est de type $(p^n, p^m)$, alors
$G$ engendré par deux générateurs $a$ et $b$ et
$G/\Phi(G')\gamma_3(G)$ est
engendré par $a$ et $b$ modulo
$\Phi(G')\gamma_3(G)$ tel que
$$[a, b]=c,\,a^{p^n}\equiv b^{p^m}\equiv c^p\equiv [a, c]\equiv [b, c]\equiv 1\mod
\Phi(G')\gamma_3(G)$$
\end{lem}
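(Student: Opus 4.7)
My plan is to fix two generators of $G$ and then exploit the non-metacyclic hypothesis, via Theorem \ref{017}, to force the claimed congruences modulo $\Phi(G')\gamma_3(G)$.

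Since $G/G'$ has rank $2$, Burnside's basis theorem produces generators $a, b$ of $G$, which may be chosen so that $a^{p^n}, b^{p^m} \in G'$. Set $c = [a, b]$. The commutators $[a, c]$ and $[b, c]$ automatically lie in $\gamma_3(G) = [G', G]$, and $c^p$ lies in $(G')^p \subseteq \Phi(G')$; all three are therefore trivial modulo $\Phi(G')\gamma_3(G)$. The substantive part is thus to show that $a^{p^n}$ and $b^{p^m}$ both lie in $\Phi(G')\gamma_3(G)$.

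For this, consider the quotient $H := G/\Phi(G')\gamma_3(G)$. Because $\gamma_3(G) \subseteq \Phi(G')\gamma_3(G)$, the group $H$ has nilpotency class $\le 2$, and $H' = G'/\Phi(G')\gamma_3(G)$ is central in $H$. Note that $G$ is non-abelian (otherwise the $2$-generator abelian group $G = G/G'$ would be metacyclic), so Lemma \ref{2:021} applies and asserts that $\Phi(G')\gamma_3(G)$ is a maximal subgroup of $G'$; since a maximal subgroup of a $p$-group has index $p$, this yields $|H'| = p$. By Theorem \ref{2:005}, $G'/\gamma_3(G)$ is cyclic generated by $c$, so $H' = \langle \bar c\rangle$, where $\bar{\phantom{x}}$ denotes the image in $H$.

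I finish by contradiction. Suppose $a^{p^n} \notin \Phi(G')\gamma_3(G)$. Then $\bar a^{p^n}$ is a nontrivial element of $H' = \langle \bar c\rangle$, hence generates $H'$, and in particular $\bar c \in \langle \bar a\rangle$. The cyclic subgroup $\langle \bar a\rangle$ thus contains $H'$, so it is normal in $H$ (because $[H, \langle \bar a\rangle] \subseteq H' \subseteq \langle \bar a\rangle$), and the quotient $H/\langle \bar a\rangle$ is cyclic, generated by the image of $\bar b$. Consequently $H$ is metacyclic, and Theorem \ref{017} then forces $G$ itself to be metacyclic, contradicting the hypothesis. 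The symmetric argument (swapping $a$ and $b$) gives $b^{p^m} \in \Phi(G')\gamma_3(G)$. The main obstacle in this scheme is correctly extracting $|H'| = p$ from Lemma \ref{2:021}; once that is in hand, the metacyclic-by-contradiction step is straightforward.
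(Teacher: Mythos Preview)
Your argument is correct. The paper itself does not supply a proof of this lemma: it is one of the results quoted from Blackburn \cite[p.~334--335]{Bl58} at the start of the subsection, so there is no in-paper proof to compare against. Your route---using Lemma~\ref{2:021} to force $|H'|=p$ in the quotient $H=G/\Phi(G')\gamma_3(G)$, and then obtaining a metacyclic structure on $H$ (hence on $G$ via Theorem~\ref{017}) whenever $a^{p^n}$ or $b^{p^m}$ survives in $H'$---is a clean self-contained argument that recovers Blackburn's statement directly from the two results the paper already quotes. One minor remark: when you invoke Theorem~\ref{2:005} to say $H'=\langle\bar c\rangle$, the theorem only gives cyclicity of $G'/\gamma_3(G)$; that the generator is $c=[a,b]$ is the standard fact for $2$-generator groups that $G'=\langle[a,b]\rangle\gamma_3(G)$, which you may want to state explicitly.
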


\begin{cor} \label{JNT3}
Soit $G$ un $2$-groupe tel que $G/G'$ est de type $(2, 4)$. Alors
\begin{enumerate}[\indent\rm 1.]
  \item $G$ est engendré par deux éléments $a$ et $b$ tels que $a^{2}\equiv b^{4}\equiv 1\mod\gamma_2(G)$;
  \item $\Phi(G')\gamma_3(G)=\gamma_3(G)$;
  \item $G$ est métacyclique si et seulement $G/\gamma_3(G)$ est métacyclique;
  \item\label{2:024} Si $G$ est non métacyclique, alors $a^{2}\equiv b^{4}\equiv c^2\equiv [a, c]\equiv [b, c]\equiv
  1\mod\gamma_3(G)$ avec $[a, b]=c$.
\end{enumerate}
\end{cor}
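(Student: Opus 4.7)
Mon plan consiste � sp�cialiser les r�sultats g�n�raux qui pr�c�dent au cas num�rique $(p, n, m) = (2, 1, 2)$, puisque le type $(2, 4)$ s'�crit $(2^1, 2^2)$.

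Pour l'assertion (1), j'invoquerais le th�or�me de la base de Burnside: comme $G$ est un $2$-groupe, on a $\Phi(G) = G^2$, et donc $G/\Phi(G)$ est le quotient de $G/G' \simeq \ZZ/2\ZZ \times \ZZ/4\ZZ$ par les carr�s, soit un groupe de type $(2, 2)$. Par suite $d(G) = 2$. Je choisirais alors $a, b \in G$ dont les images dans $G/G'$ engendrent respectivement le facteur cyclique d'ordre $2$ et celui d'ordre $4$; leurs images dans $G/\Phi(G)$ forment une base, donc $\{a, b\}$ engendre $G$, et par construction $a^2, b^4 \in G' = \gamma_2(G)$, ce qui donne les congruences annonc�es.

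L'assertion (2) constitue le point technique central. Je l'obtiendrais en appliquant la premi�re partie du th�or�me \ref{2:005}: avec $n = 1$, le quotient $\gamma_2(G)/\gamma_3(G)$ est cyclique d'ordre divisant $2$, ce qui entra�ne $(G')^2 \subseteq \gamma_3(G)$. Comme d'autre part $[G', G'] \subseteq [G', G] = \gamma_3(G)$, il vient $\Phi(G') = (G')^2 [G', G'] \subseteq \gamma_3(G)$, d'o� l'�galit� $\Phi(G') \gamma_3(G) = \gamma_3(G)$.

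Les deux derni�res assertions se d�duisent alors sans effort: (3) est la simple traduction du th�or�me \ref{017} en tenant compte de l'�galit� �tablie en (2); et (4) r�sulte du lemme pr�c�dent appliqu� � notre situation, en rempla�ant encore $\Phi(G')\gamma_3(G)$ par $\gamma_3(G)$. Aucune des �tapes ne pr�sente de r�elle difficult�: le corollaire n'est qu'une sp�cialisation num�rique des r�sultats g�n�raux de la sous-section, et tout son contenu substantiel tient dans l'observation (2) selon laquelle, pour $G/G'$ de type $(2, 4)$, le sous-groupe de Frattini $\Phi(G')$ est automatiquement absorb� par $\gamma_3(G)$; c'est ce fait qui permet, dans tout ce qui suit, de travailler modulo $\gamma_3(G)$ au lieu de $\Phi(G')\gamma_3(G)$.
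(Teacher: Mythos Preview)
Ta d\'emonstration est correcte et, pour l'essentiel, suit la m\^eme strat\'egie que celle du papier: Burnside pour (1), sp\'ecialisation des r\'esultats de Blackburn pour (3) et (4). La seule diff\'erence substantielle porte sur (2). Le papier traite d'abord le cas ab\'elien s\'epar\'ement, puis dans le cas non ab\'elien observe que $[G':\gamma_3(G)]=2$ (via le th\'eor\`eme~\ref{2:005}) fait de $\gamma_3(G)$ un sous-groupe maximal de $G'$ normal dans $G$, et conclut par l'unicit\'e \'enonc\'ee au lemme~\ref{2:021}. Ton argument est plus direct et uniforme: de $|G'/\gamma_3(G)|\le 2$ tu tires imm\'ediatement $(G')^2\subseteq\gamma_3(G)$, d'o\`u $\Phi(G')=(G')^2\subseteq\gamma_3(G)$ sans invoquer le lemme~\ref{2:021} ni distinguer le cas ab\'elien. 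C'est une simplification l\'eg\`ere mais r\'eelle; elle montre au passage que le lemme~\ref{2:021} n'est pas strictement n\'ecessaire ici. (Petite remarque de pr\'esentation: dans un $2$-groupe on a d\'ej\`a $\Phi(G')=(G')^2$, donc le facteur $[G',G']$ que tu ajoutes est redondant, m\^eme s'il ne nuit pas \`a l'argument.)
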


\begin{proof} 1. c'est une conséquence immédiate du théorème de la base de
Burnside.\\
\indent 2. Le résultat est évident si $G$ est abélien. Supposons que
$G$ est non abélien, alors le théorème \ref{2:005}  montre que $[G':\gamma_3(G)]=2$, donc $\gamma_3(G)$
est un sous-groupe maximal de $G'$ normal dans $G$, ainsi le lemme \ref{2:021} implique que $\Phi(G')\gamma_3(G)=\gamma_3(G)$.\\
\indent 3. et 4. évident.
\end{proof}
E. Benjamin et C. Snyder ont donné dans \cite{BeSn94} un lemme qui
aide à vérifier si le $2$-groupe $G$ tel que
$G/G'$ est de type $(2, 2^m)$, est
métacyclique ou non($m>1$). Avant de continuer rappelons que  Le
\textit{groupe modulaire}  $M_{2^n}$ c'est un groupe d'ordre $ 2^n$
$(n>3)$ avec la représentation suivante:
$$\langle a, b : a^{2^{n-1}}=b^2=1, [a, b]=a^{2^{n-2}}\rangle.$$
Le groupe $M_{2^n}$ est métacyclique et
$M_{2^n}/\gamma_2(M_{2^n})\simeq (2, 2^{n-2}).$ En particulier
$\gamma_2(M_{2^n})$ est d'ordre deux. Si nous posons
$G^{(2, 2)}=\Phi(G)^2[G,
\Phi(G)]$, le lemme est le suivant:
\begin{lem}
Soit $G$ un $2$-groupe tel que
$G/G'$ est de type $(2, 2^m)$, où $m>2$.
Alors
\begin{enumerate}[\indent\rm i)]
\item si $G$ est abélien, alors $G/G^{(2,
2)}$ est de type $(2, 4)$;
\item si $G$ est modulaire, alors $G/G^{(2,
2)}$ est de type $(2, 4)$;
\item si $G$ est métacyclique non modulaire, alors $G/G^{(2,
2)}$ est l'unique groupe métacyclique non modulaire d'ordre $16$
dont l'abélianisé est de type $(2, 4)$;
\item si $G$ est non métacyclique, alors $G/G^{(2,
2)}$ est l'unique groupe non métacyclique d'ordre $16$ dont son
abélianisé est de type $(2, 4)$.
\end{enumerate}
\end{lem}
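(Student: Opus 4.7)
Le plan est d'identifier $\bar G := G/G^{(2,2)}$, dans chacun des quatre cas, comme l'un de trois $2$-groupes sp\'ecifiques. Je commencerais par \'etablir les propri\'et\'es g\'en\'erales suivantes de $\bar G$ sous l'hypoth\`ese $m\geq 3$: (a) $\bar G$ est de classe de nilpotence au plus $2$, car $\gamma_3(G) = [G, G'] \subseteq [G, \Phi(G)] \subseteq G^{(2,2)}$; (b) $\bar G$ est d'exposant divisant $4$, car $G^4 \subseteq \Phi(G)^2 \subseteq G^{(2,2)}$; (c) $(\bar G)^{ab} \simeq G^{ab}/4\, G^{ab} \simeq \ZZ/2\ZZ \times \ZZ/4\ZZ$; (d) $|\bar G'| \leq 2$, car $\bar b^2 \in \Phi(\bar G) \subseteq Z(\bar G)$ entra\^{\i}ne $[\bar a, \bar b]^2 = [\bar a, \bar b^2] = 1$. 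Par cons\'equent $|\bar G| \leq 16$, et les seuls $2$-groupes \`a deux g\'en\'erateurs compatibles avec ces contraintes sont $\ZZ/2\ZZ \times \ZZ/4\ZZ$, le groupe m\'etacyclique $G_1 = \langle a, b : a^4 = b^4 = 1, bab^{-1} = a^{-1}\rangle$, et l'unique $2$-groupe non m\'etacyclique $H$ d'ordre $16$ d'ab\'elianis\'e de type $(2,4)$.

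Les cas (i) et (ii) sont imm\'ediats \`a partir des pr\'esentations. Si $G$ est ab\'elien, $[G, \Phi(G)]$ est trivial et $G^{(2,2)} = G^4$, donnant $\bar G \simeq \ZZ/2\ZZ \times \ZZ/4\ZZ$. Si $G = M_{2^{m+2}}$, on v\'erifie que $[G, \Phi(G)] = 1$ (en utilisant $[a^2,b] = [a,b]^2 = a^{2^{m+1}} = 1$) et que $\Phi(G)^2 = \langle a^4\rangle$ absorbe $[a, b] = a^{2^m}$ pour $m \geq 2$, d'o\`u encore $\bar G \simeq \ZZ/2\ZZ \times \ZZ/4\ZZ$.

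Dans le cas (iii), $\bar G$ est m\'etacyclique (comme quotient d'un groupe m\'etacyclique), donc $\bar G \in \{\ZZ/2\ZZ \times \ZZ/4\ZZ, G_1\}$; exclure l'option ab\'elienne revient \`a montrer $G' \not\subseteq G^{(2,2)}$. Dans le cas (iv), on appliquerait l'analogue du Corollaire \ref{JNT3} pour un ab\'elianis\'e de type $(2, 2^m)$ afin d'obtenir des g\'en\'erateurs $a, b$ et $c = [a, b]$ tels que $a^2, b^{2^m}, c^2, [a, c], [b, c]$ appartiennent \`a $\Phi(G')\gamma_3(G) \subseteq G^{(2,2)}$; ces relations, lues dans $\bar G$, donnent $\bar a^2 = \bar b^4 = \bar c^2 = [\bar a, \bar c] = [\bar b, \bar c] = 1$, qui est pr\'ecis\'ement la pr\'esentation de $H$, d'o\`u $\bar G \simeq H$.

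Le principal obstacle est le cas (iii): prouver que $G' \not\subseteq G^{(2,2)}$ pour un $2$-groupe $G$ m\'etacyclique non modulaire non ab\'elien avec $G^{ab}$ de type $(2, 2^m)$, $m \geq 3$. L'approche serait d'utiliser une pr\'esentation explicite d'un tel $G$ issue de la classification des $2$-groupes m\'etacycliques, puis de calculer le g\'en\'erateur de $G'$ et de v\'erifier par un calcul direct qu'il n'appartient pas \`a $\Phi(G)^2[G, \Phi(G)]$; l'hypoth\`ese non modulaire garantit pr\'ecis\'ement que ce g\'en\'erateur ne tombe pas dans $G^{(2,2)}$, distinguant ce cas du cas (ii) o\`u au contraire $G' \subseteq G^{(2,2)}$.
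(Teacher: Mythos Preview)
The paper does not prove this lemma: it is quoted from the Benjamin--Snyder preprint \cite{BeSn94} and stated without proof, so there is no argument in the paper to compare against. I therefore comment on the proposal itself.

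Your reductions (a)--(d) are correct and efficiently cut the list of candidates for $\bar G$ down to three groups; cases (i) and (ii) are fine. However, the gap you isolate in (iii) is also present, unacknowledged, in your treatment of (iv). Showing that $\bar a,\bar b,\bar c$ satisfy the defining relations of $H$ only produces a surjection $H\twoheadrightarrow \bar G$; since $\ZZ/2\ZZ\times\ZZ/4\ZZ$ is also a quotient of $H$, this together with $|\bar G|\le 16$ does not yet exclude $\bar G$ abelian. In both (iii) and (iv) the missing step is exactly $[a,b]\notin G^{(2,2)}$, and you have only argued it for (iii).

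For (iv) this can be repaired without the classification. Since $G'/\gamma_3(G)$ has order $2$, one has $(G')^2\subseteq\gamma_3(G)$, hence $\Phi(G')\gamma_3(G)=\gamma_3(G)$ and, by the Blackburn--Evens formula you already used, $G^{(2,2)}=G^4\gamma_3(G)$. Set $\hat G=G/\gamma_3(G)$, so that $\bar G=\hat G/\hat G^4$. Blackburn's lemma (the one just before Corollaire~\ref{JNT3}) gives generators with $\hat a^2=\hat b^{2^m}=\hat c^2=1$ and $\hat c=[\hat a,\hat b]$ central; a direct class-$2$ computation then shows $\hat G^4=\langle\hat b^4\rangle$. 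If $\hat c$ were a power of $\hat b$, its image in $\hat G^{ab}$ would force $\hat c=\hat b^{2^mk}=1$, contradicting $G'\neq\gamma_3(G)$. Thus $\hat c\notin\langle\hat b^4\rangle$, so $\bar c\neq 1$ and $\bar G\cong H$. For (iii) the same reduction shows that $\bar G$ abelian would force $\hat b^{2^m}=\hat c$, hence $\langle\hat b\rangle$ is a cyclic maximal subgroup of $\hat G$, which pins $\hat G$ down as modular; your proposed route via the explicit classification of metacyclic $2$-groups then finishes the job of lifting this to $G$ itself.
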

Si $G$ est un $2$-groupe tel que $G/G'$ est de type $(2, 4)$, le
lemme précédent c'est presque le corollaire précédent. La remarque
suivante montre cette coïncidence.
\begin{rem}\label{JNT2}
Soit $G$ un $2$-groupe tel que $G/G'$ est de type $(2, 4)$. Alors
$G^{(2, 2)}=\gamma_3(G)$.
\end{rem}
\begin{proof}
Soit $G=\langle x, y\rangle$  tel que $x^{2}\equiv y^{4}\equiv
1\mod\gamma_2(G)$. Remarquons d'abord que  $G^{(2, 2)}=(G^2)^2[G,
G^2]$. N. Blackburn  et L. Evensa  ont prouvé dans \cite[corollaire
2.3, p. 104]{BE79} que $G^{(2, 2)}=G^4\gamma_2^2(G)\gamma_3(G)$.
Comme $\gamma_2(G)/\gamma_3(G)$ est d'ordre $2$, alors
$\gamma_2^2(G)\subseteq \gamma_3(G)$, ainsi $G^{(2,
2)}=G^4\gamma_3(G)$.\\
\indent Si $G$ est un groupe abélien métacyclique d'ordre $16$, alors $G^4=1$ et la remarque est donc évidente. Supposons que $G$ est métacyclique d'ordre $>16$. Nous
pouvons montrer facilement que $\gamma_2(G)=\langle x^2\rangle$
(voir dans la suite le lemme \ref{2:013}), donc nous avons
$\gamma_3(G)=\langle x^4\rangle$, ce qui nous donne que
$\gamma_3(G)\subseteq G^4$ et $G^{(2, 2)}=G^4$. De plus nous savons
que $\gamma_2(G)/\gamma_3(G)$ est d'ordre $2$ et
$\gamma_3(G)\subseteq G^{(2, 2)}\subseteq \gamma_2(G)$, alors
$G^{(2, 2)}=\gamma_2(G)$ ou bien $G^{(2, 2)}=\gamma_3(G)$. Si nous
avons le premier cas, nous obtiendrons aussi que $G^4=\gamma_2(G)$.
Ce qui est impossible, car $x^4\in G^4$ et $x^4\notin
\gamma_2(G)$.\\
\indent Si $G$ est non métacyclique, alors $x^{2}\equiv y^{4}\equiv
1\mod\gamma_3(G)$ (Corollaire \ref{JNT3}). Alors $G^4\subseteq
\gamma_3(G)$, et par suite $G^{(2, 2)}=\gamma_3(G)$.
\end{proof}
\subsection{Résultats principaux}
L'idée de ces résultats est une conséquence d'une étude d'un cas particulier ou $G$ est groupe de Galois d'une certaine extension de corps de nombres(\cite{AzTa08}). Pour passer à un $2$-groupe quelconque les
travaux de E. Benjamin, F. Lemmermeyer, et C. Snyder et surtout
l'article \cite{BeLeSn07} m'ont beaucoup m'aider de démontrer mes conjectures. Nous reprenons les notations introduites
en \cite[Lemme 1]{BeLeSn97}. Soit $G=\langle a, b\rangle$ un $2$-groupe non métacyclique métabélien
 tel que $a^{2}\equiv b^{4}\equiv 1\mod\gamma_2(G)$. Les termes $c_i$ sont
définis comme suit: $[a, b] = c=c_2$ et $c_{j+1} = [b, c_j ]$. Nous
avons  $G' =\langle c_2, c_3, \ldots\rangle$, $\gamma_3(G) = \langle c_2^2, c_3,
\ldots\rangle$ et $\gamma_4(G) = \langle c_2^4, c_3^2, c_4, \ldots\rangle$ voir
\cite[Lemme 2]{BeLeSn97}. Comme $M$ est un sous-groupe maximal de
$G$ tel que $M/G'$ est de type $(2, 2)$, alors $M=\langle a, b^2,
G'\rangle$. Un calcul élémentaire utilisant le \cite[Lemme
2]{BeLeSn97} permet de vérifier que $M'\gamma_4(G)=\gamma_3(G)$. Par
\cite[Théorème 2.49ii]{Hal33}, nous avons $M'=\gamma_3(G)$. Ceci
nous permet d'énoncer le lemme suivant:

\begin{lem}\label{2:022} Soient $G$ un $2$-groupe non métacyclique métabélien tel que $G/G'$ est de type $(2, 4)$ et $M$ le sous-groupe maximal de $G$ tel
que $M/G'$ est de type $(2, 2)$. Alors $M'=\gamma_3(G)$.
\end{lem}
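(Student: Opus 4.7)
The strategy is to identify $M'$ with $\gamma_3(G)$ in two stages: first establish the equality $M'\gamma_4(G)=\gamma_3(G)$ by a direct commutator calculation modulo $\gamma_4(G)$, then lift this to $M'=\gamma_3(G)$ via Hall's theorem.

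The first step is to fix generators. By the Burnside basis theorem (Corollary \ref{JNT3}), $G=\langle a,b\rangle$ with $a^{2},\,b^{4}\in G'$. Since $G/G'$ is of type $(2,4)$, the maximal subgroup $M$ whose image in $G/G'$ is isomorphic to $(2,2)$ corresponds to the unique subgroup $\langle\bar a,\bar b^{2}\rangle$ of $G/G'$. Hence $M=\langle a,b^{2},G'\rangle$, so $M$ is generated by $a$, $b^{2}$ and the commutators $c_2=[a,b],\,c_3,c_4,\dots$

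The second and central step is a commutator calculation in $M$. Using the identities \eqref{JNT4} and \eqref{JNT5}, the metabelian assumption (which makes $G'$ abelian, so the $c_j$'s commute among themselves), and the explicit presentations
\[
G'=\langle c_2,c_3,\dots\rangle,\qquad \gamma_3(G)=\langle c_2^{2},c_3,\dots\rangle,\qquad \gamma_4(G)=\langle c_2^{4},c_3^{2},c_4,\dots\rangle
\]
from \cite[Lemme 2]{BeLeSn97}, every commutator $[u,v]$ with $u,v\in\{a,b^{2},c_2,c_3,\dots\}$ can be expanded as an explicit word in the $c_j$'s and then reduced modulo $\gamma_4(G)$. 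A typical sample computation, based on $c_2^{b}=c_2[c_2,b]=c_2c_3^{-1}$, yields
\[
[a,b^{2}]=[a,b]\,[a,b]^{b}=c_2\cdot c_2 c_3^{-1}=c_2^{2}c_3^{-1}.
\]
Combining this with the analogous computations for $[a,c_j]$ and $[b^{2},c_j]$, one checks that the images of these commutators span all of $\gamma_3(G)/\gamma_4(G)$, which gives $M'\gamma_4(G)=\gamma_3(G)$.

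For the third step, since $M'\trianglelefteq G$ with $M'\subseteq\gamma_3(G)$, the equality $M'\gamma_4(G)=\gamma_3(G)$ can be lifted to $M'=\gamma_3(G)$ by a standard Frattini-type argument along the lower central series of the nilpotent group $G$, namely \cite[Theorem 2.49ii]{Hal33}. The main obstacle is the middle step: one must systematically identify which commutators among the generators of $M$ survive modulo $\gamma_4(G)$, taking care that the cancellations forced by the metabelian identities still leave enough independent contributions to span $\gamma_3(G)/\gamma_4(G)$. Once that equality is secured, the invocation of Hall's lifting principle is essentially formal.
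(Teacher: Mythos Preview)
Your proposal is correct and follows essentially the same route as the paper's own argument: fix generators $a,b$ with $M=\langle a,b^2,G'\rangle$, use the descriptions of $G'$, $\gamma_3(G)$, $\gamma_4(G)$ in terms of the $c_j$'s from \cite[Lemme~2]{BeLeSn97} to verify $M'\gamma_4(G)=\gamma_3(G)$, and then invoke \cite[Theorem~2.49ii]{Hal33} to lift to $M'=\gamma_3(G)$. In fact your write-up is more explicit than the paper's, which simply asserts that ``un calcul \'el\'ementaire'' using \cite[Lemme~2]{BeLeSn97} gives $M'\gamma_4(G)=\gamma_3(G)$; your sample computation $[a,b^2]=c_2^2c_3^{-1}$ and the remark that the non-metacyclic hypothesis forces $c_2^2\in\gamma_3(G)$ (hence $M'\subseteq\gamma_3(G)$) make the skeleton of that calculation visible.
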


\begin{cor} Soient $G$ le seul $2$-groupe non métacyclique d'ordre $16$ tel que
$G/G'$ est de type $(2, 4)$ et $M$ le sous-groupe maximal de $G$ tel
que $M/G'$ est de type $(2, 2)$. Alors  $M/M'$ est de type $(2, 2,
2)$.
\end{cor}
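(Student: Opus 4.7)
The plan is to combine Lemma \ref{2:022} with Corollary \ref{JNT3}, exploiting the fact that $G$ has the minimal admissible order.

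First, since $|G|=16$ and $G/G'$ is of type $(2,4)$, we have $|G'|=2$. Any normal subgroup of order $2$ in a $2$-group lies in the center, so $G'\subseteq Z(G)$ and therefore $\gamma_3(G)=[G',G]=1$. Applying Lemma \ref{2:022}, we obtain $M'=\gamma_3(G)=1$, so $M$ is abelian of order $8$. There are only three possibilities, namely $(2,2,2)$, $(2,4)$ and $(8)$, and the task reduces to excluding the last two.

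Since $G$ is non-metacyclic, Corollary \ref{JNT3}(4) furnishes generators $a,b$ of $G$ such that, in view of $\gamma_3(G)=1$, we have the genuine relations $a^2=b^4=c^2=[a,c]=[b,c]=1$, where $c=[a,b]$. In particular $G'=\langle c\rangle\subseteq Z(G)$. Since $G/G'\simeq(2,4)$ with $\bar a$ of order $2$ and $\bar b$ of order $4$, the unique maximal subgroup of $G/G'$ of type $(2,2)$ is $\langle\bar a,\bar b^2\rangle$, whence
$$M=\langle a,b^2\rangle\cdot G'=\langle a,b^2,c\rangle.$$

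It remains to check that $a$, $b^2$, $c$ are three pairwise commuting involutions. The orders are clear: $a^2=1$, $(b^2)^2=b^4=1$, $c^2=1$. For commutativity, $c$ is central, and using identity \eqref{JNT5} together with $[b,c]=1$ (so that $c^b=c$), we compute
$$[a,b^2]=[a,b]\,[a,b]^b=c\cdot c^b=c^2=1.$$
Hence $M$ is abelian of order $8$ generated by three involutions, forcing $M\simeq(\mathbb{Z}/2\mathbb{Z})^3$, i.e.\ of type $(2,2,2)$. The only non-routine step is the exclusion of $M\simeq(2,4)$; this hinges on the equality $b^4=1$ (not merely $b^4\in G'$), which is precisely the sharper information supplied by Corollary \ref{JNT3}(4) in the non-metacyclic situation.
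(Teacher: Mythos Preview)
Your proof is correct and follows essentially the same line as the paper's: both identify the presentation $a^2=b^4=c^2=[a,c]=[b,c]=1$, write $M=\langle a,b^2,c\rangle$, and conclude by observing that $a,b^2,c$ are involutions in an abelian group of order $8$. The only minor difference is that you invoke Lemma~\ref{2:022} together with $\gamma_3(G)=1$ to obtain $M'=1$ up front, whereas the paper computes $M'$ directly by checking $[a,c]=[a,b^2]=[b^2,c]=1$; since you then recompute $[a,b^2]=1$ anyway, your appeal to Lemma~\ref{2:022} is redundant (though harmless), and the two arguments coincide.
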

\begin{proof} Comme $G$ est le seul $2$-groupe non métacyclique d'ordre $16$ tel que
$G/G'$ est de type $(2, 4)$ et $M$ le
sous-groupe maximal de $G$ tel que $M/G'$ est de type $(2, 2)$. Alors
$$G=\langle a ,b : c=[a, b], a^2=b^4=c^2=1, [a, c]=[b, c]=1\rangle.$$
Comme $G'=\langle c\rangle$, alors $G$ est métabélien et
$$M=\langle a, b^2, c\rangle\text{ \quad et \quad}M'=\langle [a, c], [a, b^2],[b^2, c]\rangle.$$
Les propretés \eqref{JNT4} et \eqref{JNT5} entraînent que
\begin{align*}
   [a, b^2]&=[a, b]\cdot[a, b]^b=c\cdot c^b=c^2[c,b]=1\\
   [b^2, c]&=[b, c]^b\cdot[b, c]=1.
\end{align*}
Par suite $M$ est un groupe abélien d'ordre 8; or $a^2=b^4=c^2=1$, alors $M=M/M'$ est de type $(2, 2, 2)$.
\end{proof}
Le théorème suivant généralise cette résultat à un $2$-groupe non métacyclique quelconque.
\begin{thm}\label{2:015}
Soient $G$ un $2$-groupe métabélien tel que $G/G'$ est de type $(2, 4)$ et $M$
le sous-groupe maximal de $G$ tel que $M/G'$ est de type $(2, 2)$.
Alors les propriétés suivantes sont équivalentes:
\begin{enumerate}[\rm\indent 1.]
\item $G$ est non métacyclique;
\item $M/M'$ est de type $(2, 2, 2)$;
\item $d(M)=3$.
\end{enumerate}
\end{thm}
\begin{proof}
Rappelons que si $d(\mathfrak{G})$ est le nombre minimal de générateurs
d'un groupe $\mathfrak{G}$ et $\mathfrak{M}$ est un sous-groupe d'indice fini de $\mathfrak{G}$, alors l'inégalité de Schreier est:
\begin{displaymath}
d({\mathfrak{M}})-1 \leq[\mathfrak{{G }}: \mathfrak{{M}}](d(\mathfrak{G})-1)~.
\end{displaymath}
Pour notre cas si $\mathfrak{G}=G=\langle a, b\rangle$  tel que
$a^{2}\equiv b^{4}\equiv 1\mod\gamma_2(G)$ et
$\mathfrak{M}=M=\langle a, b^2, \gamma_2(G)\rangle$, alors
$d(M)\in\{1, 2, 3\}$; or $M$ ne peut pas être cyclique car il
admet trois sous-groupes maximaux ($N_1$, $N_2$ et $N_3$ voir
Figure \ref{Fig1}). On conclut que $d(M)\in\{2, 3\}$.\\
\indent 1. $\Rightarrow$ 2. Supposons que $G$ est non métacyclique, alors le lemme \ref{2:022} nous montre que $M'=\gamma_3(G)$. Comme $G/G'$ est de type $(2, 4)$,
Le théorème \ref{2:005} donne que $\gamma_2(G)/\gamma_3(G)$ est d'ordre égal à $2$. Par le corollaire \ref{JNT3}, nous trouvons que $a^{2}\equiv b^{4}\equiv c^2\equiv 1\mod\gamma_3(G)$ avec $[a, b]=c$. Ceci montre que l'exposant de $M/M'$ est $2$. Remarquons que
$$
[M:M']=[M: \gamma_3(G)]=[M: \gamma_2(G)]\cdot[ \gamma_2(G): \gamma_3(G)]=4\cdot2=8
,$$ alors $M/M'$ est de type $(2, 2, 2)$.\\
\indent 2. $\Rightarrow$ 3. D'après théorème de la base de
Burnside.\\
\indent 3. $\Rightarrow$ 1. Si $d(M)=3$, alors $G$ ne peut pas être
métacyclique, car si $G$ est un sous-groupe d'un $p$-groupe métacyclique, alors nous avons $d(G)\leq 2$.
\end{proof}
On reprend les notations précédentes. Dans le cas métacyclique nous
avons le lemme suivant:
\begin{lem}\label{2:013} Soient $G$ un $2$-groupe métacyclique d'ordre $2^n$  tel que $n\geq 4$ et $G/G'$
est de type $(2, 4)$ et $M$ le sous-groupe maximal de $G$ tel que
$M/G'$ est de type $(2, 2)$. Alors $M'$ est d'ordre $\leq 2$. De
plus
\begin{displaymath}
M'=\left\{\begin{tabular}{ll}
            $1$, & \text{si} $G=G_{1}, G_{2}$, $G_{3}$  \text{ou} $G=M_{16}$ ; \\
            $\langle a^{2^{n-3}}\rangle$, & \text{si} $G=G_{4}$.
          \end{tabular}
   \right.
\end{displaymath}
\end{lem}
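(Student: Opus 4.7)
Mon plan est de traiter, un par un, chacun des groupes métacycliques listés plus haut: $M_{16}$ (pour $n=4$), $G_1$ (pour $n\geq 4$), $G_2$ et $G_3$ (pour $n\geq 5$) et $G_4$ (pour $n>5$), en identifiant d'abord $M$ puis son groupe dérivé à partir de la présentation explicite.

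Première étape: expliciter $M$. Puisque $G' = \langle[a,b]\rangle \subseteq \langle a\rangle$ et $G/G'$ est de type $(2, 4)$, il faut reconnaître lequel des générateurs est d'ordre $2$ modulo $G'$. Pour $M_{16}$ cela est immédiat (car $b^2 = 1$), et pour les $G_m$ la relation $a^b = a^{-1}z_2$ donne $\bar a^2 = \bar z_2$ dans $G/G'$; la $2$-valuation de $2^{n-3}-2$ (cas $G_2$) ou de $2^{n-4}-2$ (cas $G_4$) vaut $1$, ce qui force $\bar a^2 = 1$. D'où $M = \langle a^2, b\rangle$ pour $M_{16}$, et $M = \langle a, b^2\rangle$ pour les $G_m$.

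Deuxième étape: calculer le commutateur fondamental. En combinant \eqref{JNT4} et \eqref{JNT5}, j'aurai $[a, b^2] = c\cdot c^b$ (et $[a^2, b] = c^a\cdot c = c^2$ pour $M_{16}$), où $c = [a,b] = a^{-2}z_2$. Comme $\langle a\rangle$ est cyclique abélien, tout se ramène à un petit calcul d'exposants modulo $2^{n-2}$; les simplifications, dues essentiellement au fait que les termes du type $z_2^{2^k}$ saturent la relation $a^{2^{n-2}}=1$ dès que $2k \geq n-2$, donneront $[a, b^2] = 1$ pour $G_1, G_2, G_3$, $[a, b^2] = a^{2^{n-3}}$ pour $G_4$, et $[a^2, b] = c^2 = a^8 = 1$ pour $M_{16}$.

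Il restera à conclure que $M' = \langle [a, b^2]\rangle$ (respectivement $\langle [a^2, b]\rangle$). Le point structurel est que $M$ contient un sous-groupe cyclique normal, à savoir $\langle a\rangle$ (ou $\langle a^2\rangle$ dans le cas de $M_{16}$), avec quotient cyclique; ainsi $M$ est lui-même métacyclique et $M' \subseteq \langle a\rangle$. L'abélianité de $\langle a\rangle$ assure que $a$ centralise $[a, b^2]$; un calcul direct ou, dans le cas $G_4$, l'observation que $(a^{2^{n-3}})^b = a^{2^{n-3}}$ (donc $a^{2^{n-3}}$ est central dans $G$ tout entier) montre que $b^2$ le centralise aussi. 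La clôture normale se réduit donc au sous-groupe cyclique engendré par ce commutateur, ce qui donne exactement les valeurs annoncées. La seule véritable difficulté technique sera la gymnastique d'exposants de puissances de $2$ dans $\langle a\rangle$.
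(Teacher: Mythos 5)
Your plan is correct and follows essentially the same route as the paper: identify $M=\langle a,b^2\rangle$ (resp. $\langle a^2,b\rangle$ for $M_{16}$) from the presentations $G_m$ and $M_{16}$, compute $[a,b^2]=[a,b][a,b]^b$ (resp. $[a^2,b]=[a,b]^a[a,b]$) by exponent arithmetic in $\langle a\rangle$ modulo $a^{2^{n-2}}=1$, and conclude $M'=\langle[a,b^2]\rangle$, obtaining $1$ for $G_1,G_2,G_3,M_{16}$ and $\langle a^{2^{n-3}}\rangle$ for $G_4$. The intermediate values you announce agree with the paper's computations, and your extra remark justifying $M'=\langle[a,b^2]\rangle$ via centrality of the commutator only makes explicit a step the paper leaves implicit.
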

\begin{proof}
Supposons que $n\geq 4$ et $G$ est non modulaire, alors
\begin{displaymath}
G=G_m=\langle a, b :\, a^{2^{n-2}} = 1,\, b^4 = z_1,\, a^b = a^{-1}z_2\rangle,
\end{displaymath}
où $1\leq m \leq 4$ et les valeurs de $z_1$ et de $z_2$ sont
données dans le tableau \ref{Tabl3} ($n=4$ seulement pour
$G=G_{1}$, pour  $G_{4}$ nous avons $n > 5$). Comme $G$ est un
groupe métacyclique, alors $G'$ est cyclique, ce qui nous permet
d'écrire $G'=\langle[a, b]\rangle$. Calculons $[a, b]$:
\begin{align}
[a, b]&=a^{-1}a^b =a^{-2}z_2\nonumber \\
&=\left\{\begin{tabular}{ll}
                      $a^{-2}$, & \text{si} $G=G_{1}\text{ ou } G_{3}$ ; \\
                      $ a^{-2+2^{n-3}}$, & \text{si} $G=G_{2}$ ;\\
                      $ a^{-2+2^{n-4}}$, & \text{si} $G=G_{4}$.
                    \end{tabular}
          \right.\label{JNT6}\\
&=\left\{\begin{tabular}{ll}
                      $a^{-2}$, & \text{si} $G=G_{1} \text{ ou } G_{3}$ ; \\
                      $ a^{-2(1-2^{n-4})}$, & \text{si} $G=G_{2}$ ;\\
                      $ a^{-2(1-2^{n-5})}$, & \text{si} $G=G_{4}$.
                      \end{tabular}
          \right.\nonumber
\end{align}
Pour $G=G_{2}$, $n\geq 5$, alors $1-2^{n-4}$ est un nombre impair.
Pour $G=G_{4}$, $n\geq 6$, alors $1-2^{n-5}$ est un nombre impair.
Par conséquent on peut déduire que:
\begin{displaymath}
G'=\langle[a, b]\rangle=\langle a^{-2}\rangle=\langle a^2\rangle.
\end{displaymath}
Comme $a^{2}\equiv b^{4}\equiv 1\mod\gamma_2(G)$, alors $M=\langle a,
b^2\rangle$ et $M'=\langle[a, b^2]\rangle$. La propriété \eqref{JNT5} donne $[a, b^2]=[a, b][a, b]^b$. Calculons
$[a, b]^b$:
\begin{align*}
[a, b]^b&=\left\{\begin{tabular}{ll}
                      ${(a^{-2})}^b$, & \text{si} $G=G_{1}\text{ ou } G_{3}$ ; \\
                      ${( a^{-2+2^{n-3}})}^b$, & \text{si} $G=G_{2}$ ;\\
                      ${(a^{-2+2^{n-4}})}^b$, & \text{si} $G=G_{4}$.
                  \end{tabular}
          \right.\\
&=\left\{\begin{tabular}{ll}
                      $a^{2}$, & \text{si} $G=G_{1} \text{ ou } G_{3}$ ; \\
                      $ a^{(-1+2^{n-3})(-2-2^{n-3})}$, & \text{si} $G=G_{2}$ ;\\
                      $ a^{(-1+2^{n-4})(-2-2^{n-4})}$, & \text{si} $G=G_{4}$.
          \end{tabular}
          \right.\\
&=\left\{\begin{tabular}{ll}
                      $a^{2}$, & \text{si} $G=G_{1} \text{ ou } G_{3}$ ; \\
                      $ a^{2-2^{n-3}-2^{n-2}+2^{2n-6}}$, & \text{si} $G=G_{2}$ ;\\
                      $ a^{2-2^{n-4}-2^{n-3}+2^{2n-8}}$, & \text{si} $G=G_{4}$.
          \end{tabular}
          \right.
\end{align*}
La relation \eqref{JNT6} entraîne que
\begin{align*}
[a, b^2]&=\left\{\begin{tabular}{ll}
                      $1$, & \text{si} $G=G_{1}\text{ ou } G_{3}$ ; \\
                      $ a^{-2^{n-2}+2^{2n-6}}$, & \text{si} $G=G_{2}$ ;\\
                      $ a^{-2^{n-3}+2^{2n-8}}$, & \text{si} $G=G_{4}$.
          \end{tabular}
          \right.\\
&=\left\{\begin{tabular}{ll}
                      $1$, & \text{si} $G=G_{1} \text{ ou } G_{3}$ ; \\
                      $ a^{2^{n-2}(-1+2^{n-4})}$, & \text{si} $G=G_{2}$ ;\\
                      $ a^{2^{n-3}(-1+2^{n-5})}$, & \text{si} $G=G_{4}$.
          \end{tabular}
          \right.
\end{align*}
Comme $a^{2^{n-2}}=1$, alors  $M'$ est d'ordre $\leq 2$. De plus
$$M'=\left\{
                      \begin{tabular}{ll}
                      $1$, & \text{si} $G=G_{1}, G_{2}$  \text{ou} $G_{3}$; \\
                      $\langle a^{2^{n-3}}\rangle$, & \text{si} $G=G_{4}$.
                    \end{tabular}
                  \right.$$
Si $G$ est modulaire, alors $G=M_{16}=\langle a ,b : b^2=a^8=1,
a^b=a^{5}\rangle$. Il est facile de voir que $[a, b]=a^{4}$ et
$[a, b]^a=a^{4}$, ainsi $M'=\langle [a^2, b]\rangle=\langle
a^{8}\rangle=1$. Cela achève la preuve du lemme.
\end{proof}
\begin{thm} \label{2:014}
Soient $G$ un $2$-groupe non abélien tel que $G/G'$ est de type $(2, 4)$ et $M$
le sous-groupe maximal de $G$ tel que $M/G'$ est de type $(2, 2)$.
Alors les propriétés suivantes sont équivalentes:
\begin{enumerate}[\rm\indent 1.]
\item $G$ est métacyclique;
\item $M/M'$ est de type $(2, 2^m)$ avec $m\geq 2$;
\item $d(M)=2$.
\end{enumerate}
\end{thm}
\begin{proof}
1. $\Rightarrow$ 2. Soit $G$ un $2$-groupe métacyclique non modulaire d'ordre
$2^n$. Si $n\geq 4$ et  $G=G_{1}, G_{2}$  \text{ou} $G_{3}$,
puisque $M=\langle x, y^2\rangle$, le lemme précédent montre que\label{2:023}
$M/M'$ est de type $(2, 2^{n-2})$. Si $n\geq 6$ et $G=G_{4}$, alors
$M/M'$ est de type $(2, 2^{n-3})$. Si $G$ est le $2$-groupe modulaire, alors $G=\langle x, y: x^2=y^8=1, y^x=y^3\rangle$. Dans le lemme précédent nous avons signalé que $M=\langle x, y^2\rangle$ est un sous-groupe abélien, ce qui donne que $M/M'$ est
de type $(2, 2^2)$.\\
\indent 2. $\Rightarrow$ 3. D'après théorème de la base de Burnside.\\
\indent 3. $\Rightarrow$ 1. D'après le théorème précédent.
\end{proof}
Les deux théorèmes précédent montre que si $M/M'$ est d'ordre $>8$, alors $G$ est un $2$-groupe métacyclique, mais si $M/M'$ est d'ordre égal à $8$, alors $G$ est un $2$-groupe modulaire ou isomorphe à $G_{1}$, si $M/M'$ est de type $(2, 4)$ ou bien non métacyclique, si $M/M'$ est de type $(2, 2, 2)$. Ceci nous permet d'énoncer les deux corollaire suivants:
\begin{cor}\label{2:019}
Soient $G$ un $2$-groupe tel que $G/G'$ est de type $(2, 4)$ et $M$
le sous-groupe maximal de $G$ tel que $M/G'$ est de type $(2, 2)$ et
$M/M'$ est de type $(2, 4)$. Alors $G$ est un groupe modulaire ou
bien $$G=G_{1}=\langle x ,y : x^4=y^4=1, y^x=y^{-1}\rangle.$$
\end{cor}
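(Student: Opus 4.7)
Le plan est de r�duire la situation � l'ordre $16$ en combinant les th�or�mes~\ref{2:015} et~\ref{2:014}, puis d'invoquer la classification des $2$-groupes m�tacycliques d'ordre $16$ dont l'ab�lianis� est de type $(2, 4)$ rappel�e dans la section~\ref{JNT1}. D'abord, j'appliquerai la contrapos�e du th�or�me~\ref{2:015}: si $G$ �tait non m�tacyclique, on aurait $M/M'$ de type $(2, 2, 2)$, ce qui contredirait l'hypoth�se $M/M' \simeq (2, 4)$. Donc $G$ est n�cessairement m�tacyclique.

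Ensuite, je relirai la preuve du th�or�me~\ref{2:014} pour extraire la structure pr�cise de $M/M'$ selon la famille � laquelle $G$ appartient (cf.\ le tableau~\ref{Tabl3}): $M/M'$ est de type $(2, 2^{n-2})$ si $G = G_m$ avec $m \in \{1, 2, 3\}$ d'ordre $2^n$, de type $(2, 2^{n-3})$ si $G = G_4$ d'ordre $2^n$ avec $n \geq 6$, et de type $(2, 4)$ si $G$ est modulaire. L'hypoth�se $M/M' \simeq (2, 4)$ force alors $n = 4$ dans le premier cas, exclut le deuxi�me cas (qui exigerait $n = 5 < 6$), et force $G = M_{16}$ dans le cas modulaire (car $G/G' \simeq (2, 4)$ impose $n = 4$ pour $M_{2^n}$).

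Enfin, il restera � rappeler que les seuls $2$-groupes m�tacycliques d'ordre $16$ avec $G/G' \simeq (2, 4)$ sont $M_{16}$ et $G_1$. L'obstacle le plus d�licat sera de v�rifier que les pr�sentations $G_2$ et $G_3$ � $n = 4$ n'apportent pas de nouveaux exemples ($G_2$ devient ab�lien avec $G_2/G_2' \simeq (4, 4)$, donc exclu par l'hypoth�se sur $G/G'$; $G_3$ est isomorphe � $M_{16}$ via un changement de g�n�rateurs �l�mentaire), ce qui est d�j� implicite dans la classification cit�e. On conclura ainsi que $G = M_{16}$ (modulaire) ou $G = G_1$.
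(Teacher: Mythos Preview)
Your proof is correct and follows essentially the same route as the paper: deduce metacyclicity from Theorems~\ref{2:015}/\ref{2:014}, then use the computation of $M/M'$ carried out in the proof of Theorem~\ref{2:014} (via Lemma~\ref{2:013}) to force $|G|=16$, and conclude via the classification of metacyclic groups of order~$16$ with $G/G'\simeq(2,4)$. Your extra verification that the presentations $G_2$, $G_3$ at $n=4$ yield no new examples is harmless but unnecessary, since the paper already restricts those presentations to $n\geq 5$ (cf.\ the parenthetical remark in the proof of Lemma~\ref{2:013}).
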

\begin{cor}\label{2:017}
Soient $G$ un $2$-groupe tel que $G/G'$ est de type $(2, 4)$ et $M$
le sous-groupe maximal de $G$ tel que $M/G'$ est de type $(2, 2)$ et
$M/M'$ est d'ordre $> 8$. Alors $G$ est métacyclique non modulaire
non abélien.
\end{cor}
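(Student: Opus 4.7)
Mon plan est d'obtenir ce corollaire comme cons�quence imm�diate des deux th�or�mes pr�c�dents, qui fournissent ensemble une dichotomie compl�te sur la structure de $M/M'$. Pr�cis�ment, ces deux r�sultats entra�nent que $M/M'$ est soit de type $(2,2,2)$ si $G$ est non m�tacyclique (th�or�me \ref{2:015}), soit de type $(2,2^m)$ avec $m\geq 2$ si $G$ est m�tacyclique (th�or�me \ref{2:014}). L'hypoth�se $|M/M'|>8$ �carte d'embl�e le type $(2,2,2)$ ainsi que le cas $m=2$ (tous deux d'ordre exactement $8$): il ne reste donc que $m\geq 3$, ce qui force d�j� $G$ � �tre m�tacyclique.

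Il faudra ensuite exclure les deux sous-cas particuliers, � savoir $G$ ab�lien et $G$ modulaire. Le cas ab�lien se r�glera par un argument d'ordre �l�mentaire: si $G$ �tait ab�lien on aurait $G=G/G'$ de type $(2,4)$, donc $|G|=8$, $|M|=4$, et $|M/M'|\leq 4$, contredisant $|M/M'|>8$. Pour le cas modulaire, je m'appuierai sur le fait que parmi les groupes $M_{2^n}$, seul $M_{16}$ v�rifie $G/G'\simeq(2,4)$ puisque $M_{2^n}/\gamma_2(M_{2^n})\simeq(2,2^{n-2})$ ne co�ncide avec $(2,4)$ que pour $n=4$. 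Or, pour $G=M_{16}$, le lemme \ref{2:013} donne $M'=1$, d'o� $|M/M'|=|M|=8$, ce qui contredit � nouveau l'hypoth�se. Ces trois v�rifications combin�es �tablissent que $G$ est m�tacyclique, non modulaire et non ab�lien.

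Je m'attends � ce que la seule v�rification d�licate concerne l'articulation avec les hypoth�ses de m�tab�lianit� des th�or�mes employ�s: le th�or�me \ref{2:015} la requiert explicitement, alors que le th�or�me \ref{2:014} n'en fait pas mention. Mais le cas m�tacyclique est automatiquement m�tab�lien (son d�riv� �tant cyclique), et l'alternative non m�tacyclique peut �tre trait�e s�par�ment via \ref{2:015}; l'argument se r�duit ainsi � une lecture combin�e des deux r�sultats pr�c�dents, sans calcul additionnel substantiel, le corollaire appara�t donc comme un corollaire essentiellement formel de la dichotomie d�j� �tablie.
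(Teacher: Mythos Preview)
Your proof is correct and follows exactly the approach the paper uses: the corollary is presented there as an immediate consequence of the dichotomy established by th\'eor\`emes \ref{2:015} and \ref{2:014}, together with the observation (implicit in the proof of th\'eor\`eme \ref{2:014} and lemme \ref{2:013}) that the abelian and modular cases force $|M/M'|\leq 8$. Your explicit handling of the metabelian hypothesis is in fact more careful than the paper's own one-line justification, which simply invokes the two preceding theorems without further comment.
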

\begin{thm}\label{2:020}
Soient $G$ un $2$-groupe tel que $G/G'$ est de type $(2, 4)$ et $M$
(resp. $H$ et $K$) le sous-groupe maximal de $G$ tel que $M/G'$ est
de type $(2, 2)$ (resp. $H/G'$ et $K/G'$ sont cyclique d'ordre $4$).
Alors les propriétés suivantes sont équivalentes:
\begin{enumerate}[\rm\indent 1.]
\item $G$ est abélien;
\item $M/M'$ est de type ($2, 2)$;
\item $H$ est cyclique d'ordre $4$;
\item $K$ est cyclique d'ordre $4$.
\end{enumerate}
\end{thm}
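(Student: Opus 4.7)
Le plan est de prouver (1)$\Rightarrow$(2), (3), (4), puis chacune des trois r�ciproques. L'implication (1)$\Rightarrow$(2,3,4) s'obtient par inspection directe du groupe ab�lien $\ZZ/2\ZZ\times\ZZ/4\ZZ$ : si $G$ est ab�lien, alors $G=G/G'$ est isomorphe � $\ZZ/2\ZZ\times\ZZ/4\ZZ$, qui poss�de exactement trois sous-groupes maximaux --- un sous-groupe de Klein de type $(2,2)$, qui doit �tre $M$, et deux sous-groupes cycliques d'ordre $4$, engendr�s respectivement par $(0,1)$ et $(1,1)$, qui sont alors $H$ et $K$. Comme $M$ est ab�lien, $M'=1$ et $M/M'=M$ est de type $(2,2)$.

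Les implications (3)$\Rightarrow$(1) et (4)$\Rightarrow$(1) reposent sur un simple argument d'ordre. Si $H$ (ou $K$) est cyclique d'ordre $4$, alors son cardinal vaut $4$ et, puisqu'il s'agit d'un sous-groupe d'indice $2$ dans $G$, on a $|G|=8$. Comme $|G/G'|=8$ par hypoth�se, il vient $G'=\{1\}$, et par suite $G$ est ab�lien.

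L'implication (2)$\Rightarrow$(1) est l'�tape principale et utilise de fa�on essentielle les th�or�mes \ref{2:015} et \ref{2:014} �tablis plus haut dans la section. Supposons $M/M'$ de type $(2,2)$. Si $G$ �tait non m�tacyclique, le th�or�me \ref{2:015} forcerait $M/M'$ � �tre de type $(2,2,2)$, contradiction; donc $G$ est m�tacyclique. Si de plus $G$ �tait non ab�lien, le th�or�me \ref{2:014} donnerait $M/M'\cong(2,2^m)$ avec $m\geq 2$, contredisant encore $M/M'\cong(2,2)$. On conclut que $G$ est ab�lien. L'unique point � surveiller est la bonne articulation des dichotomies \og m�tacyclique / non m�tacyclique \fg{} et \og ab�lien / non ab�lien \fg{} fournies par ces deux th�or�mes, en observant que la valeur $m=1$ est explicitement exclue de la caract�risation m�tacyclique non ab�lienne de \ref{2:014}, ce qui laisse pr�cis�ment le cas ab�lien comme seule possibilit�.
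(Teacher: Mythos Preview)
Your proof is correct and follows essentially the same approach as the paper: the paper proves $1 \Leftrightarrow 2$ by invoking Th\'eor\`emes \ref{2:015} and \ref{2:014} (exactly your dichotomy non m\'etacyclique / m\'etacyclique non ab\'elien), dismisses $1 \Rightarrow 3$ as \'evident, and proves $3 \Rightarrow 1$ by the same order argument you give ($[H:G']=4$ and $|H|=4$ force $G'=1$). Your write-up is more explicit in spelling out the case analysis for $2 \Rightarrow 1$, but the structure and the ingredients are identical.
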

\begin{proof}
1. $\Leftrightarrow$ 2. Théorème \ref{2:015} et Théorème \ref{2:014}.\\
\indent 1. $\Rightarrow$ 3. évident.\\
\indent 3. $\Rightarrow$ 1. Comme $[H:G']=4$ et $H$ est un groupe
d'ordre $4$, alors $G'=1$, ainsi $G$ est abélien.
\end{proof}
\section{Capitulation des 2-classes d'idéaux de type $(2, 4)$ d'un corps de nombres\label{009}}
Dans toute la suite de cette section $\mathbf{k}$ désigne un corps
de nombres tel que son $2$-groupe de classes est de type $(2, 4)$,
alors $G/G'$ est aussi de type $(2, 4)$, donc $G=\langle a,
b\rangle$ tel que $a^2\equiv b^4\equiv 1\mod G'$ et $\mathrm{C}_{\mathbf{k},
2}=\langle \mathfrak{c}, \mathfrak{d}\rangle\simeq \langle aG',
bG'\rangle$ où $(\mathfrak{c}, \mathbf{k}_2^{(2)}/\mathbf{k})=aG'$
et $(\mathfrak{d}, \mathbf{k}_2^{(2)}/\mathbf{k})=bG'$ avec $(\ .\ ,
\mathbf{k}_2^{(2)}/\mathbf{k})$ est le symbole d'Artin dans
$\mathbf{k}_2^{(2)}/\mathbf{k}$. Par suite, il existe trois
sous-groupes normaux de $G$ d'indice 2 : $H_{1, 2}$, $H_{2, 2}$ et
$H_{3, 2}$ tels que
\begin{center}
$H_{1, 2}=\langle b, G'\rangle$, $H_{2, 2}=\langle ab, G'\rangle$ et
$H_{3, 2}=\langle a, b^2, G'\rangle.$
\end{center}
\ \indent Il existe aussi trois sous-groupes normaux de $G$ d'indice
4 : $H_{1, 4}$,
 $H_{2, 4}$ et $H_{3, 4}$ tels que
\begin{center}
$H_{1, 4}=\langle a, G'\rangle$, $H_{2, 4}=\langle ab^2, G'\rangle$
et $H_{3, 4}=\langle b^2, G'\rangle.$
\end{center}
\ \indent Chaque sous-groupe $H_{i,j}$ de $\mathrm{C}_{\mathbf{k}, 2}$
correspond à une extension non ramifiée $\mathbf{K}_{i,j}$ de
$\mathbf{k}_2^{(2)}$ telle que $\mathrm{C}_{\mathbf{k}, 2}/H\simeq
\operatorname{Gal}(\mathbf{K}/\mathbf{k})$ et
$H_{i,j}=\mathcal{N}_{\mathbf{K}_{i,j}/\mathbf{k}}(\mathrm{C}_{\mathbf{k}_{i,j},
2})$. La situation est schématisée par le diagramme suivant :
\begin{figure}[H]
$$
\xymatrix{
   & \mathbf{k}_2^{(2)} \ar@{<-}[d] & \\
   & \mathbf{k}_2^{(1)}\ar@{<-}[ld]\ar@{<-}[d]\ar@{<-}[rd]\\
  \mathbf{K_{1,  4}}\ar@{<-}[rd]&\ar@{<-}[ld] \mathbf{K_{3,  4}}\ar@{<-}[d]\ar@{<-}[rd]  & \mathbf{K_{2,  4}}\ar@{<-}[ld]\\
  \mathbf{K_{1,  2}}\ar@{<-}[rd]& \mathbf{K_{3,  2}} \ar@{<-}[d]  & \mathbf{K_{2,  2}}\ar@{<-}[ld]\\
  &\mathbf{k}
  }
$$
\caption{\label{Fig2}}
\end{figure}
Notre but sera l'étude du problème de capitulation des $2$-classes
d'idéaux dans les extensions quadratiques non ramifiées
$\mathbf{K}_{1, 2}$, $\mathbf{K}_{2, 2}$ et $\mathbf{K}_{3, 2}$ et
aussi si possible dans les extensions de degrés $4$ non ramifiées:
$\mathbf{K}_{1, 4}$, $\mathbf{K}_{2, 4}$ et $\mathbf{K}_{3, 4}$.
Nous utilisons le symbole $(X_1, X_2, X_3)$ où $X_i\in\{4, 2, 2A,
2B\}$ pour $i=1$, $2$, $3$.  $X_i=4$ ou $2$ veut dire que quatre ou deux classes d'idéaux de $\mathbf{k}$ capitulent dans $\mathbf{K}_{i, 2}$. $X_i=2A$ (resp. $2B$) veut dire que deux classes d'idéaux de $\mathbf{k}$ capitulent dans $\mathbf{K}_{i, 2}$ et l'extensions quadratique  $\mathbf{K}_{i, 2}$ de $\mathbf{k}$ est de type $A$ (resp. $B$) avec $A$ et $B$ sont les conditions de Taussky.\\
\indent  On sait, d'après \cite{Ki76} que si $\mathbf{L}$ est un
corps de nombres et
$G=\operatorname{Gal}(\mathbf{L}_2^{(2)}/\mathbf{L})$ est
un $2$-groupe tel que $G/G'$ est de type $(2,
2)$, alors la structure de $G$ est complètement
déterminée en calculant le nombre des $2$-classes qui capitulent
dans les trois extensions quadratiques non ramifiées de $L$: $|\ker
j_i|$, $i=1$, 2, 3 et en précisant si les extensions non ramifiées
de $\mathbf{L}$ sont de type $A$ ou $B$. Plus précisément
$G$ est abélien de type (2, 2), quaternionique, diédral
ou semi-diédral; donc $G$ est toujours un 2-groupe
métacyclique. Mais dans le cas où $G/G'$ est
de type $(2, 2^m)$, avec $m>2$ E. Benjamin et C. Snyder ont proposé
dans \cite{BeSn94} une méthode pour savoir si $G$ est
métacyclique ou non, basée sur l'étude des mêmes questions pour de
$G/G^{(2, 2)}$ que pour $G$(voir
\cite{BeSn94} p. $12$). Si on note $\mathbf{L}_{1}$,
$\mathbf{L}_{2}$ et $\mathbf{L}_{3}$ les extensions quadratiques non
ramifiées de $\mathbf{L}$, on a le tableaux suivant :
$$\begin{array}{cccl}
  \hline
  \mathbf{L}_1 & \mathbf{L}_{2} & \mathbf{L}_{3}& G\\
  \hline
  4 & 4 & 4 & \hbox{abélien} \\
  2B & 2B & 2A & \hbox{modulaire ou non métacyclique} \\
  2A & 2A & 4 &\hbox{métacyclique}\\
   2A  & 2A  &  2A & \hbox{métacyclique ou non métacyclique}\\
   \hline
\end{array}$$
Dans les autres cas $G$ est non métacyclique. Alors si on
a le type $(2A, 2A, 2A)$ ou $(2B, 2B, 2A)$, nous ne pouvons rien
conclure. Dans \cite{Ko63}, H. Koch a caractérisé
$G/G^{(2, 2)}$, mais seulement dans le cas où
$\mathbf{L}$ est un corps quadratique imaginaire. Pour nous, nous
avons $G/G^{(2, 2)}=G/\gamma_3(G)$ (voir corollaire \ref{JNT3} et remarque \ref{JNT2}). Ceci montre que la dernière caractérisation coincide avec celle de N. Blackburn. Nous avons donné dans la Section \ref{JNT1} une nouvelle méthode pour savoir si $G$ est
métacyclique ou non, basé sur la structure du groupe abélien $H_{3,
2}/H_{3, 2}'$, c'est la structure du $2$-groupe de classes de
$\mathbf{K}_{3, 2}$.
\subsection{Le cas modulaire}
Dans cette section, nous étudions le problème de capitulation dans
le cas où $G$ est un $2$-groupe modulaire. Alors $G=\langle a ,b : a^2=b^8=1,
b^a=b^{5}\rangle$. Une application de la formule \eqref{JNT7} nous
donne le lemme suivant:
\begin{lem}
On garde les notations précédentes. Si $G$ est un $2$-groupe
modulaire, alors
\begin{enumerate}[\rm\indent 1.]
\item $\mathrm{V}_{G \rightarrow H_{i, 2}}(xG') =\left\{
            \begin{array}{ll}
              1, & \hbox{si $x=a$;} \\
              b^4, & \hbox{si $x=b^2$.}
            \end{array}
          \right.
$ avec $i=1$, $2$.
\item $\mathrm{V}_{G \rightarrow H_{3, 2}}(xG') =\left\{
            \begin{array}{ll}
              b^{-4}, & \hbox{si $x=a$;} \\
              b^4, & \hbox{si $x=b^2$.}
            \end{array}
          \right.
$
\item $\mathrm{V}_{G \rightarrow H_{i, 4}}(xG') =\left\{
            \begin{array}{ll}
              1, & \hbox{si $x=a$;} \\
              b^4, & \hbox{si $x=b$.}
            \end{array}
          \right.
$ avec $i=1$, $2$ ou $3$.
\end{enumerate}
\end{lem}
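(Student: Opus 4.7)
The plan is to apply the transfer formula \eqref{JNT7} systematically to each of the nine cases (three choices of $H$ in parts 1--2 with two generators $aG'$, $b^2G'$; and three choices of $H$ in part 3 with two generators $aG'$, $bG'$). Because $G/G'$ is generated by $aG'$ and $bG'$ with $a^2 \equiv 1 \pmod{G'}$, the generators $aG'$ and $b^2G'$ (of order $2$) suffice for transfers into the subgroups of index $2$, while $aG'$ and $bG'$ are needed for transfers into the subgroups of index $4$.

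First I would extract all necessary structural information from the presentation. From $b^a = b^5$ one has $G' = \langle [a,b]\rangle = \langle b^{-1}b^5\rangle = \langle b^4\rangle$ of order~$2$, and the handy rewriting rule $ab^k = b^{5k}a$. A brief computation using this rule shows that each $H_{i,j}$ is abelian: $H_{1,2}=\langle b\rangle$ and $H_{3,4}=\langle b^2\rangle$ are cyclic; for $H_{3,2}=\langle a,b^2\rangle$ one checks $ab^2 = b^{10}a = b^2a$, so $a$ and $b^2$ commute; and $H_{2,2}=\langle ab\rangle$, $H_{1,4}=\langle a,b^4\rangle$, $H_{2,4}=\langle ab^2\rangle$ are likewise cyclic (indeed $(ab)^2 = b^6$ and $(ab^2)^2 = b^4$). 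Consequently $H_{i,j}'=1$ for every $i,j$, so the transfer just lands in $H_{i,j}$ itself and there is no quotienting to track.

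The second step is, for each pair $(g,H)$, to identify $\langle g\rangle H$, determine the integer $f=[\langle g\rangle H:H]$, and pick coset representatives of $G/\langle g\rangle H$. The pattern is simple:
\begin{itemize}
\item When $g\in H$ (e.g.\ $g=b^2$ and $H=H_{1,2}$, or $g=a$ and $H=H_{3,2}$), one has $\langle g\rangle H=H$, so $f=1$ and $t=[G:H]$; the formula reduces to a product of conjugates of $g$.
\item When $\langle g\rangle H = G$ (e.g.\ $g=a$ and $H=H_{1,2}$, or $g=b$ and $H=H_{1,4}$), one has $t=1$ and the transfer is simply $g^{[G:H]}H'$.
\item The intermediate case $[G:\langle g\rangle H]=2$ occurs only for $g=a$ with $H=H_{2,4}$ or $H=H_{3,4}$, where $\langle a\rangle H=H_{3,2}$; then $f=2$ and the product is trivial because $a^2=1$.
\end{itemize}

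The final step is the explicit computation of each non-trivial transfer. The only conjugations one needs are $b^{-i}ab^i$ and $a^{-1}b^2 a$, both of which follow at once from $ab^k=b^{5k}a$: e.g.\ $a^{-1}b^2a = b^{10}=b^2$ (showing $V(b^2G')=b^2\cdot b^2=b^4$ into $H_{1,2}$), and $b^{-1}ab = b^4a$ leading to $a\cdot b^{-1}ab = b^4$ for the transfer of $aG'$ into $H_{3,2}$ (matching $b^{-4}=b^4$ modulo $b^8=1$). The remaining entries follow by the same mechanical substitutions. The main obstacle is not conceptual but bookkeeping: one must be careful to choose the correct set of coset representatives of $G/\langle g\rangle H$ in each case (and not of $G/H$), and to exploit the fact that every $H_{i,j}$ is abelian so that products can be freely reordered modulo~$H'_{i,j}=1$.
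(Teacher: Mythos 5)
Your proposal is correct and follows essentially the same route as the paper: compute $G'=\langle b^4\rangle$, identify the subgroups $H_{i,j}$, note they are all abelian so that $H'_{i,j}=1$, and evaluate the transfer formula case by case (the paper writes out only part 1 and leaves the remaining cases to the same mechanical computation). Two harmless slips in your bookkeeping: $H_{1,4}=\langle a,b^4\rangle$ is of type $(2,2)$, not cyclic, and the intermediate case $[G:\langle g\rangle H]=2$ also occurs for $g=b$, $H=H_{3,4}=\langle b^2\rangle$ (there $f=2$ and the transfer is $b^2\cdot(b^2)^a=b^4$, the same value), so neither affects any stated result.
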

\begin{proof}
$G=\langle a ,b : a^2=b^8=1,
b^a=b^{5}\rangle$, alors $[b, a]=b^{-1}b^a=b^4$, ansi $G'=\langle b^4\rangle$ et $H_{1, 2}=\langle b\rangle$,  $H_{2, 2}=\langle ab, b^4\rangle$,  $H_{3, 2}=\langle a, b^2\rangle$,  $H_{1, 4}=\langle a, b^4\rangle$, $H_{2, 4}=\langle ab^2, b^4\rangle$,  et $H_{3, 4}=\langle b^2\rangle$. Comme $ba=ab^5$, $b^8=1$ et $a^2=1$, alors
\begin{align*}
    &(ab)^4=abababab=a^2b^5ba^2b^5b=b^{12}=b^4,\\
    &(ab^2)^2=abbab^2=abab^7=a^2b^{12}=b^4.
\end{align*}
On obtient que $
H_{2, 2}=\langle ab\rangle$ et $H_{2, 4}=\langle ab^2\rangle.$
On va montrer seulement le résultat 1. Pour les autres résultats, il
suffit de suivre la même méthode. Si $i$ égal à $1$ ou à $2$, la formule \eqref{JNT7} donnera que
$$
\mathrm{V}_{G \rightarrow H_{i, 2}}(xG') =\left\{
            \begin{array}{ll}
              a^2H'_{i, 2}, & \hbox{si $x=a$;} \\
              b^4[b^2, a]H'_{i, 2}, & \hbox{si $x=b^2$,}
            \end{array}
          \right.
$$
car $a\notin H_{i, 2}$ et $b^2\in H_{i, 2}$. Or $[b^2, a]=[b,
a]^b[b, a]=(b^4)^bb^4=b^8=1$ et $H_{i, 2}$ est un sous-groupe
cyclique de $G$, ce qui achève la preuve du résultat 1.
\end{proof}
\begin{thm}\label{003}
On garde les notations précédentes. Alors $G$ est un groupe
modulaire si et seulement si le $2$-groupe de classes de
$\mathbf{K_{3,  2}}$ est de type $(2, 4)$ et seulement deux classes qui capitulent dans $\mathbf{K_{3,  2}}$. Dans cette situation nous
avons:
\begin{enumerate}[\rm\indent 1.]
\item $\ker j_{\mathbf{k} \rightarrow \mathbf{K}_{i, 2}}=\{1, \mathfrak{c}\}$ avec $i=1$, $2$.
\item $\ker j_{\mathbf{k} \rightarrow \mathbf{K}_{3, 2}}=\{1, \mathfrak{c}\mathfrak{d}^2\}$.
\item $\ker j_{\mathbf{k} \rightarrow \mathbf{K}_{i, 4}}=\{1, \mathfrak{c}, \mathfrak{d}^2, \mathfrak{c}\mathfrak{d}^2\}$ avec $i=1$, $2$ ou $3$.
\item La capitulation des $2$-classes d'idéaux de $\mathbf{k}$ est de
type $(2B, 2B, 2A)$.
\end{enumerate}
\end{thm}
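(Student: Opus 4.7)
The plan is to treat each implication separately, using the structure of $H_{3,2}$ together with the transfer computations already collected in the preceding lemma. For the forward implication, the presentation $G=\langle a,b:a^2=b^8=1,\, b^a=b^5\rangle$ gives $(b^2)^a=b^{10}=b^2$, so $H_{3,2}=\langle a,b^2\rangle$ is abelian of order $8$ with $a$ of order $2$ and $b^2$ of order $4$; hence $H_{3,2}\cong\mathbb{Z}/2\times\mathbb{Z}/4$ and, by Artin reciprocity (which identifies $C_{\mathbf{K}_{3,2},2}$ with $H_{3,2}/H'_{3,2}$), the $2$-class group of $\mathbf{K}_{3,2}$ is of type $(2,4)$.

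To identify the capitulation kernels via the isomorphism $\ker j_{\mathbf{k}\to\mathbf{K}_{i,j}}\cong\ker\mathrm{V}_{G\to H_{i,j}}$, I combine the transfer values supplied by the lemma with the subgroup lattice of $G/G'\cong\mathbb{Z}/2\times\mathbb{Z}/4$. For $i=1,2$: the lemma gives $\mathrm{V}(aG')=1$ and $\mathrm{V}(b^2G')=b^4\neq 1$, so $aG'$ lies in the kernel but $b^2G'$ does not; the only subgroup of order $4$ of $G/G'$ containing $aG'$ is $\{G',aG',b^2G',ab^2G'\}$, which is excluded, giving $\ker=\{1,\mathfrak{c}\}$. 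For $i=3$: from $\mathrm{V}(aG')=b^{-4}$ and $\mathrm{V}(b^2G')=b^4$ we deduce $\mathrm{V}(ab^2G')=1$, and the analogous subgroup argument yields $\ker=\{1,\mathfrak{c}\mathfrak{d}^2\}$. For the degree-$4$ extensions, the lemma gives $\mathrm{V}$ on the generators $aG',bG'$, whence $\mathrm{V}(a^ib^jG')=b^{4j}$; since $b^4$ has order $2$ in each $H_{i,4}$, the kernel equals $\{1,\mathfrak{c},\mathfrak{d}^2,\mathfrak{c}\mathfrak{d}^2\}$.

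The Taussky types follow from the class-field identification $\mathcal{N}_{\mathbf{K}_{i,2}/\mathbf{k}}(C_{\mathbf{K}_{i,2}})\leftrightarrow H_{i,2}/G'\subset G/G'$. A short computation in $G/G'$ shows that $\langle bG'\rangle$ and $\langle abG'\rangle=\{G',abG',b^2G',ab^3G'\}$ (using $(ab)^2\equiv b^2\pmod{G'}$) both omit $aG'=\mathfrak{c}$; hence for $i=1,2$ the intersection with the capitulation kernel is trivial and the type is $B$. For $i=3$, the image $\langle aG',b^2G'\rangle$ already contains $\mathfrak{c}\mathfrak{d}^2$, the intersection is the whole kernel, and the type is $A$; this establishes $(2B,2B,2A)$.

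For the converse, assume that the $2$-class group of $\mathbf{K}_{3,2}$ is of type $(2,4)$ and that exactly two classes capitulate there. The first condition says $H_{3,2}/H'_{3,2}$ is of type $(2,4)$, and Corollary \ref{2:019} leaves only two possibilities: $G$ is modular or $G\cong G_1=\langle x,y:x^4=y^4=1,\, y^x=y^{-1}\rangle$. The main obstacle is ruling out $G_1$; for this group, $H_{3,2}=\langle y,x^2\rangle$ is abelian and the transfer formula yields $\mathrm{V}(yG')=y\cdot y^{-1}=1$ and $\mathrm{V}(x^2G')=x^2\cdot x^2=1$, so the whole subgroup $\{G',yG',x^2G',yx^2G'\}$ of order $4$ lies in the kernel. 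Thus four classes capitulate in $\mathbf{K}_{3,2}$ for $G_1$, contradicting the hypothesis; therefore $G$ is modular, and properties (1)--(4) follow from the forward analysis.
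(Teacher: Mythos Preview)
Your proof is correct and follows essentially the same approach as the paper's: both directions hinge on Corollary~\ref{2:019} together with a direct transfer computation in $G_1$ to rule it out, and the capitulation kernels are read off from the preceding lemma. Your treatment is in fact slightly more self-contained---you verify directly that $H_{3,2}$ is abelian of type $(2,4)$ rather than citing the proof of Theorem~\ref{2:014}, and you spell out the Taussky-type argument explicitly---but the underlying strategy is identical.
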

\begin{proof}
Supposons que le $2$-groupe de classes de $\mathbf{K_{3,  2}}$ est
de type $(2, 4)$, alors le groupe quotient $H_{3, 2}/H_{3, 2}'$ est
de type $(2, 4)$. D'après le corollaire \ref{2:019}, $G$ est un
groupe modulaire ou bien $$G=G_{1}=\langle a ,b : a^4=b^4=1,
a^b=a^{-1}\rangle.$$ Si $G=G_{1}$, alors $[a, b]=a^{-1}a^b=a^{-2}$
et $H_{3, 2}=\langle a, b^2\rangle$. $G'$ est un groupe cyclique
engendré par $a^2$, donc $H_{3, 2}'$ est aussi cyclique engendré par
$$[b^2, a]=[b, a]^b[b, a]=(a^{2})^ba^{2}=(a^b)^2a^2=a^{-2}a^2=1,$$
ceci prouve que $H_{3, 2}'=1$  et nous permet d'écrire:
$$
\mathrm{V}_{G \rightarrow H_{3, 2}}(xG') =\left\{
            \begin{array}{ll}
              a^2[a, b]=1, & \hbox{si $x=a$;} \\
              b^4[b^2, b]=1, & \hbox{si $x=b^2$.}
            \end{array}
          \right.
$$
Nous avons alors montré que $\ker \mathrm{V}_{G \rightarrow H_{i,
2}}=\{aG', b^2G', ab^2G', G'\}$, c'est-à-dire que quatre classes
capitulent dans $\mathbf{K_{3,  2}}$; c'est une contradiction avec
le faite que deux classes qui capitulent dans $\mathbf{K_{3, 2}}$,
alors $G$ est un groupe modulaire.

Inversement, si $G$ est un groupe modulaire, on a vu, dans la preuve
du théorème \ref{2:014}, que $H_{3, 2}/H_{3, 2}'$ est de type $(2,
4)$, alors le $2$-groupe de classes de $\mathbf{K_{3,  2}}$ est de
type $(2, 4)$, de plus le lemme précédent entraîne que $\ker
\mathrm{V}_{G \rightarrow H_{i, 2}}=\{ab^2, 1\}$, on conclut que deux classes capitulent dans
$\mathbf{K_{3,  2}}$; pour plus de précisions, on a $
\ker j_{\mathbf{k} \rightarrow \mathbf{K}_{3, 2}}=\{1, \mathfrak{c}\mathfrak{d}^2\}.
$ Pour 1. et 3., il suffit de calculer le $\ker$ de $\mathrm{V}_{G
\rightarrow H_{i, j}}$ en utilisant le lemme précédent. 4. c'est une conséquence de la formule \eqref{JNT7}.
\end{proof}

\subsection{Le cas métacyclique non modulaire}
Dans cette sous-section, nous étudions le problème de capitulation
dans le cas où $G$ est un $2$-groupe métacyclique non modulaire
d'ordre $2^n$ avec $n\geq 4$. Nous donnons ensuite le lemme que nous
avons utilisé pour calculer le groupe dérivé de $H_{i, j}$, puis
nous avons trouvé l'image de $aG'$, $bG'$ et $b^2G'$ par
$\mathrm{V}_{G \rightarrow H_{i, j}}$.
\begin{lem}
On garde les notations précédentes. Si $G=G_m$, alors
\begin{enumerate}[\rm\indent (i)]
\item $[a, b]=\left\{\begin{tabular}{ll}
                      $a^{-2}$, & \text{si} $G=G_{1} \text{ ou } G_{3}$ ; \\
                      $ a^{-2(1-2^{n-4})}$, & \text{si} $G=G_{2}$ ;\\
                      $ a^{-2(1-2^{n-5})}$, & \text{si} $G=G_{4}$.
          \end{tabular}
          \right.$
\item $[a, b^2]=\left\{\begin{tabular}{ll}
                      $1$, & \text{si} $G=G_{1}, G_{2} \text{ ou } G_{3}$ ; \\
                      $ a^{2^{n-3}(-1+2^{n-5})}$, & \text{si} $G=G_{4}$.
          \end{tabular}
          \right.$
\item $[a^2, b]=\left\{\begin{tabular}{ll}
                      $a^{-4}$, & \text{si} $G=G_{1}, G_{2} \text{ ou } G_{3}$ ; \\
                      $ a^{-4+2^{n-3}}$, & \text{si} $G=G_{4}$.
          \end{tabular}
          \right.$
\item $[a^2, b^2]=1$.
\end{enumerate}
\end{lem}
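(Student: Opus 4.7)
L'id\'ee centrale est d'exploiter la relation d\'efinissante $a^b = a^{-1}z_2$ o\`u $z_2$ est une puissance de $a$ (Tableau \ref{Tabl3}), ce qui entra\^{\i}ne que tous les commutateurs consid\'er\'es sont des puissances de $a$; les calculs se r\'eduisent donc \`a une arithm\'etique modulo $2^{n-2}$ dans l'exposant de $a$.

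Je commencerais par l'assertion (i), qui s'obtient directement: $[a, b] = a^{-1}a^b = a^{-2}z_2$, et en substituant les valeurs de $z_2$ donn\'ees dans le Tableau \ref{Tabl3} on retrouve exactement la formule \eqref{JNT6} d\'ej\`a \'etablie dans le Lemme \ref{2:013}. Pour l'assertion (ii), j'appliquerais l'identit\'e \eqref{JNT5} sous la forme $[a, b^2] = [a, b]\cdot[a, b]^b$: le terme $[a, b]^b$ se calcule en utilisant \`a nouveau $a^b = a^{-1}z_2$, puis on simplifie modulo $a^{2^{n-2}} = 1$; ce calcul est essentiellement celui effectu\'e dans la preuve du Lemme \ref{2:013}.

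Pour l'assertion (iii), l'outil est l'identit\'e \eqref{JNT4}: $[a^2, b] = [a, b]^a\cdot[a, b]$. L'observation cl\'e est que $[a, b] = a^{-2}z_2$, \'etant une puissance de $a$, commute avec $a$; par cons\'equent $[a, b]^a = [a, b]$ et $[a^2, b] = [a, b]^2 = a^{-4}z_2^2$. Il ne reste qu'\`a v\'erifier cas par cas, en utilisant $a^{2^{n-2}} = 1$, que $z_2^2 = 1$ pour $G_1, G_2, G_3$ et $z_2^2 = a^{2^{n-3}}$ pour $G_4$, ce qui conclut imm\'ediatement.

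L'assertion (iv) est la plus d\'elicate. Mon plan consiste \`a appliquer de nouveau \eqref{JNT5} pour \'ecrire $[a^2, b^2] = [a^2, b]\cdot[a^2, b]^b$; comme $[a^2, b] = a^k$ est une puissance de $a$ par (iii), conjuguer par $b$ revient \`a calculer $(a^b)^k = (a^{-1+\epsilon})^k$ o\`u $\epsilon \in \{0, 2^{n-3}, 0, 2^{n-4}\}$ selon le type $G_m$. On obtient ainsi $[a^2, b^2] = a^{k\epsilon}$, et le point d\'elicat consiste \`a v\'erifier arithm\'etiquement que $k\epsilon \equiv 0 \pmod{2^{n-2}}$ dans chacun des quatre cas. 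C'est pour $G_4$, o\`u $k = -4 + 2^{n-3}$ et $\epsilon = 2^{n-4}$, que le calcul est le plus subtil: il fait intervenir la contrainte $n \geq 6$ et produit un terme en $2^{2n-7}$ dont il faut contr\^oler la divisibilit\'e par $2^{n-2}$. C'est l\`a l'unique obstacle s\'erieux, les autres cas se r\'eglant par substitution directe.
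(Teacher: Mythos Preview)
Your proof is correct, and for parts (i)--(iii) it follows essentially the same route as the paper: direct substitution for (i), the identity \eqref{JNT5} plus the computation already done in Lemma~\ref{2:013} for (ii), and the identity \eqref{JNT4} together with the observation that $[a,b]$ is a power of $a$ (hence commutes with $a$) for (iii).

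The only genuine difference is in (iv). You expand $[a^2,b^2]$ via \eqref{JNT5} as $[a^2,b]\cdot[a^2,b]^b$, which leads to $a^{k\epsilon}$ and then to the arithmetic check $k\epsilon\equiv 0\pmod{2^{n-2}}$; for $G_4$ this produces the term $2^{2n-7}$ and requires invoking $n\geq 6$. The paper instead expands along the \emph{other} variable, using \eqref{JNT4}: $[a^2,b^2]=[a,b^2]^a[a,b^2]$. Since $[a,b^2]$ is a power of $a$ (already computed in (ii)), conjugation by $a$ is trivial and one gets $[a^2,b^2]=[a,b^2]^2$. For $G_1,G_2,G_3$ this is $1^2=1$; for $G_4$ it is $a^{2\cdot 2^{n-3}(-1+2^{n-5})}=a^{2^{n-2}(-1+2^{n-5})}=1$ directly from $a^{2^{n-2}}=1$, with no further divisibility argument needed. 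Your approach is perfectly valid, but the paper's choice of decomposition makes the ``delicate'' step you flagged disappear entirely.
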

\begin{proof}
(i) et (ii). Voir la preuve du lemme \ref{2:013}\\
\indent (iii) Il suffit d'utiliser les égalités $[a^2, b]=[a, b]^a[a, b]$ et $[a, b]=a^{-2}z_2$.\\
\indent (iv) Nous avons $[a^2, b^2]=[a, b^2]^a[a, b^2]=\left\{\begin{tabular}{ll}
                      $ a^{2^{n-2}(-1+2^{n-5})}$, & \text{si} $G=G_{4}$; \\
                      $1$, & \text{si non}.
          \end{tabular}
          \right.$
Puisque $a^{2^{n-2}}=1$, le résultat est alors évident.
\end{proof}
\begin{cor}\label{012}
On garde les notations précédentes. Si $G=G_m$, alors
\begin{enumerate}[\rm\indent (i)]
\item $G'=\langle a^2\rangle$;
\item $H'_{2, 2}=H'_{1, 2}=\langle a^4\rangle$ et $H'_{3, 2}=\left\{
                      \begin{tabular}{ll}
                      $1$, & \text{si} $G=G_{1}, G_{2}$  \text{ou} $G_{3}$; \\
                      $\langle a^{2^{n-3}}\rangle$, & \text{si} $G=G_{4}$.
                    \end{tabular}
                  \right.$
\item $H'_{1, 4}=H'_{2, 4}=H'_{3, 2}=1$.
\end{enumerate}
\end{cor}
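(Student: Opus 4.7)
The plan is to combine the explicit commutator formulas from the preceding lemma with identities \eqref{JNT4} and \eqref{JNT5}, exploiting the metabelian structure of $G$: since $G'$ will turn out to be cyclic (from (i)), the derived subgroup of any two-generated subgroup of $G$ is itself cyclic, generated by the single commutator of its two generators. This reduces each assertion of the corollary to a short commutator calculation.

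For (i), I would read off from part~(i) of the preceding lemma that $[a,b]$ has the form $a^{-2u}$ with $u$ odd in every case $G_1,\ldots,G_4$. Hence $G' = \langle[a,b]\rangle = \langle a^{-2}\rangle = \langle a^2\rangle$.

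For (ii), I would write
$$H_{1,2} = \langle b, a^2 \rangle, \qquad H_{2,2} = \langle ab, a^2\rangle, \qquad H_{3,2} = \langle a, b^2\rangle,$$
and compute: $H'_{1,2} = \langle[b,a^2]\rangle$ is generated by a power of $a^4$ which, after a short verification based on the fact that $1 - 2^r$ is odd for $r \geq 1$, is shown to span $\langle a^4\rangle$; $H'_{2,2}$ equals $\langle[ab,a^2]\rangle = \langle[a,a^2]^b[b,a^2]\rangle = \langle[b,a^2]\rangle$ by \eqref{JNT4} (using $[a,a^2]=1$), so $H'_{2,2}=H'_{1,2}$; and $H'_{3,2}=M'$ has already been computed in Lemma~\ref{2:013}.

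For (iii), I would show that each index-$4$ subgroup is abelian. Indeed, $H_{1,4} = \langle a, a^2\rangle = \langle a\rangle$ is cyclic; $H_{2,4} = \langle ab^2, a^2\rangle$ satisfies $[ab^2,a^2] = [a,a^2]^{b^2}[b^2,a^2] = 1$ by \eqref{JNT4} and part~(iv) of the preceding lemma; and $H_{3,4} = \langle b^2, a^2\rangle$ is abelian by (iv) directly. (I read the symbol $H'_{3,2}$ appearing in (iii) as a misprint for $H'_{3,4}$, since the non-trivial value of $H'_{3,2}$ in case $G = G_4$ is already recorded in (ii).) The only real, though purely computational, obstacle is the case $G = G_4$, where each commutator comes out with an extra correction of the form $2^{n-k}$; verifying that this correction does not enlarge the cyclic subgroup generated modulo the relation $a^{2^{n-2}} = 1$ again rests on the oddness of $1 - 2^r$ for $r \geq 1$, which is precisely why the presentation of $G_4$ carries the restriction $n > 5$.
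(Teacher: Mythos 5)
Your proposal is correct and follows essentially the same route as the paper: identify $H_{i,j}$ explicitly once $G'=\langle a^2\rangle$ is known, reduce each derived subgroup to a single commutator via the identities \eqref{JNT4}--\eqref{JNT5}, and read off the values from the preceding lemma (and Lemma~\ref{2:013} for $H'_{3,2}$), the only arithmetic point being that $1-2^{r}$ is odd. Your reading of $H'_{3,2}$ in (iii) as a misprint for $H'_{3,4}$ is the right one, since the paper's own list of subgroups gives $H_{3,4}=\langle a^2,b^2\rangle$ and part (iv) of the lemma gives $[a^2,b^2]=1$.
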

\begin{proof}
(i) Comme $G=\langle a, b\rangle$ est un groupe métacyclique, alors
son groupe des commutateurs $G'$ est cyclique et est engendré par
$[a, b]$. Pour $G=G_{4}$, $n\geq 6$, alors $1-2^{n-5}$ est un
nombre impair. Par conséquent on peut déduire que $
G'=\langle[a, b]\rangle=\langle a^{-2}\rangle=\langle a^2\rangle.$\\
\indent (ii) le résultat précédent nous permet de conclure que $H_{1, 2}=\langle b, a^2\rangle$, $H_{2, 2}=\langle ab, a^2\rangle$, $H_{3, 2}=\langle a, b^2\rangle$,  $H_{1, 4}=\langle a\rangle$,  $H_{2, 4}=\langle ab^2, a^2\rangle$,  et $H_{3, 4}=\langle a^2, b^2\rangle$. Finalement, il suffit d'utiliser les propriétés \eqref{JNT4} et \eqref{JNT5}, pour remarquer que
$$[ab^2, a^2]=[a, a^2]^{b^2}[b^2, a^2]=[b^2, a^2]\text{ et } [ab, a^2]=[a, a^2]^{b}[b, a^2]=[b, a^2].$$
Notre corollaire est donc une conséquence immédiate du lemme précédent.
\end{proof}
\begin{thm}\label{002}
On garde les notations précédentes. Supposons que $G$ est d'ordre $>16$, alors les propriétés suivantes sont équivalentes:
\begin{enumerate}[\rm\indent 1.]
\item $G$ est métacyclique;
\item Le $2$-groupe de classes de $\mathbf{K_{3,  2}}$ est de type $(2, 2^m)$ avec $m\geq 3$;
\item Le $2$-nombre de classes de $\mathbf{K_{3,  2}}$ est $> 8$;
\item Le $2$-groupe de classes de $\mathbf{K_{1,  4}}$ est cyclique d'ordre $>2$.
\end{enumerate}
\end{thm}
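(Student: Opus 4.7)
Le plan consiste à identifier, via la théorie du corps de classes, le $2$-groupe de classes de $\mathbf{K}_{3,2}$ avec $M/M'$ (où $M=H_{3,2}$) et celui de $\mathbf{K}_{1,4}$ avec $H_{1,4}/H'_{1,4}$, puis à exploiter la classification développée dans la Section \ref{JNT1}. L'hypothèse $|G|>16$ élimine d'emblée les cas abélien (où $|G|=8$) et modulaire (qui, avec $G/G'$ de type $(2,4)$, n'existe qu'en ordre $16$): il ne reste donc que les cas métacyclique non modulaire ($G=G_m$ pour $1\leq m\leq 4$) et non métacyclique.

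Pour l'équivalence $1\Leftrightarrow 2$, j'appliquerai directement le théorème \ref{2:014}. Dans un sens, les calculs faits dans sa preuve pour $G=G_m$ d'ordre $2^n>16$ donnent $M/M'\simeq(2, 2^{n-2})$ si $G\in\{G_1, G_2, G_3\}$ ou $(2, 2^{n-3})$ si $G=G_4$, d'où l'exposant $m\geq 3$ (puisque $n\geq 5$, et $n\geq 6$ pour $G_4$). Dans l'autre sens, le théorème \ref{2:014} garantit la métacyclicité dès que $M/M'$ est de type $(2, 2^m)$ avec $m\geq 2$. L'équivalence $2\Leftrightarrow 3$ résultera ensuite d'un simple argument de cardinalité: si $|M/M'|>8$, le type $(2, 2, 2)$ est exclu par le théorème \ref{2:015}, le théorème \ref{2:014} donne alors $M/M'$ de type $(2, 2^m)$, et l'ordre strictement supérieur à $8$ force $m\geq 3$.

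Le c{\oe}ur technique résidera dans l'équivalence $1\Leftrightarrow 4$. L'implication $1\Rightarrow 4$ découlera immédiatement du corollaire \ref{012}: $H_{1,4}=\langle a\rangle$ est cyclique d'ordre $2^{n-2}\geq 8$ et $H'_{1,4}=1$. La réciproque $4\Rightarrow 1$ constituera le principal obstacle, car le cas non métacyclique n'offre pas la structure explicite dont on dispose pour les $G_m$; je la traiterai par contraposée. La stratégie sera la suivante: supposons $G$ non métacyclique; le corollaire \ref{JNT3}(\ref{2:024}) fournit, avec $c=[a,b]$, les congruences $a^2\equiv c^2\equiv [a,c]\equiv 1\mod\gamma_3(G)$; en combinant avec le fait que $G'/\gamma_3(G)$ est cyclique d'ordre $2$ engendré par $c$ (théorème \ref{2:005}), j'obtiendrai que $H_{1,4}=\langle a, c, \gamma_3(G)\rangle$ et que $H_{1,4}/\gamma_3(G)\simeq\ZZ/2\ZZ\times\ZZ/2\ZZ$. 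Enfin, la métabélianité de $G$ entraînera $H'_{1,4}=[a, G']\subseteq[G, G']=\gamma_3(G)$, de sorte que $H_{1,4}/\gamma_3(G)$ est un quotient non cyclique de $H_{1,4}/H'_{1,4}$, qui ne peut donc être cyclique. Ceci bouclera les quatre équivalences.
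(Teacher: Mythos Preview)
Your proof is correct and, for the equivalences $1\Leftrightarrow 2\Leftrightarrow 3$ and the implication $1\Rightarrow 4$, it follows the paper's own argument essentially verbatim (via the corollaires \ref{2:017} and \ref{012}, which are just repackagings of the th\'eor\`emes \ref{2:015} and \ref{2:014}).

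The genuine difference lies in $4\Rightarrow 1$. You argue by contraposition: assuming $G$ non m\'etacyclique, you use the congruences of corollaire \ref{JNT3}(\ref{2:024}) to show that $H_{1,4}/\gamma_3(G)\simeq(\ZZ/2\ZZ)^2$, then observe that $H'_{1,4}\subseteq[G,G']=\gamma_3(G)$ (the equality $H'_{1,4}=[a,G']$ you write is a slight abuse --- what you need and what holds is the inclusion, since every commutator in $H_{1,4}=\langle a\rangle G'$ lies in $[G,G']$ by metabelianity). This is perfectly valid. The paper instead proceeds directly and more cheaply: if $H_{1,4}/H'_{1,4}$ is cyclic, Burnside's basis theorem gives $d(H_{1,4})=1$, so $H_{1,4}$ itself is cyclic; since $G'\subseteq H_{1,4}$, it is normal in $G$, and $G/H_{1,4}\simeq (G/G')/(H_{1,4}/G')\simeq\ZZ/4\ZZ$ is cyclic, hence $G$ is m\'etacyclique by definition. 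Your route has the merit of making the obstruction to cyclicity completely explicit (a $(2,2)$-quotient), but the paper's route avoids all computation by exploiting the very definition of a metacyclic group.
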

\begin{proof}
1. $\Leftrightarrow$ 2. $\Leftrightarrow$ 3. Conséquence directe du corollaire \ref{2:017}, p. \pageref{2:017}.\\
\indent 1. $\Rightarrow$ 4. Nous déduisons du corollaire précédent que $H_{1,  4}=\langle a\rangle$ et $a$ d'ordre $>2$,
alors $H'_{1,  4}=1$.  Compte tenu du $\langle a\rangle =H_{1,  4}/H'_{1,  4}\simeq \mathrm{C}_{2 K_{1,  4}}$, ce résultat est immédiat.\\
\indent 4. $\Rightarrow$ 1. Comme $H_{1,  4}/H'_{1,  4}\simeq \mathrm{C}_{2
K_{1,  4}}$ et $\mathrm{C}_{2 K_{1,  4}}$ est un groupe cyclique, le
théorème de la base de Burnside entraîne que $H_{1, 4}$ est un
sous-groupe d'ordre $>2$ normal cyclique de $G$ d'indice $4$.
Remarquons que $G/H_{1, 4}$ est un groupe cyclique, alors $G$ est
métacyclique.
\end{proof}
\begin{cor}
On garde les notations précédentes. Si $G$ est métacyclique  non abélien, alors la tour des $2$-corps de classes de Hilbert de $\mathbf{k}$ s'arrête en $\mathbf{k}_2^{(2)}$.
\end{cor}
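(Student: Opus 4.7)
Le plan est de traduire l'assertion \emph{la tour s'arr�te en $\mathbf{k}_2^{(2)}$} en une propri�t� purement group-th�orique du groupe de Galois $\mathcal{G}:=\mathrm{Gal}(\mathbf{k}_2^{(\infty)}/\mathbf{k})$ de la tour maximale des $2$-corps de classes de Hilbert de $\mathbf{k}$, puis d'�tablir cette propri�t� � l'aide du th�or�me de la base de Burnside pour les pro-$2$-groupes. Comme $\mathrm{C}_{\mathbf{k},2}$ est fini, $\mathcal{G}$ est un pro-$2$-groupe topologiquement de type fini, et la th�orie du corps de classes appliqu�e � chaque �tage donne $\mathcal{G}^{(n)}=\mathrm{Gal}(\mathbf{k}_2^{(\infty)}/\mathbf{k}_2^{(n)})$ pour tout $n\geq 0$. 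Par cons�quent, la tour s'arr�te en $\mathbf{k}_2^{(2)}$ si et seulement si $\mathcal{G}^{(2)}=1$; or $\mathcal{G}/\mathcal{G}^{(2)}\simeq G$, si bien qu'il suffit de prouver $\mathcal{G}=G$, c'est-�-dire que $\mathcal{G}$ est m�tab�lien.

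Premi�rement, j'exploiterais la m�tacyclicit� de $G$ pour en d�duire que $G'$ est cyclique: si $N$ est un sous-groupe cyclique normal de $G$ tel que $G/N$ est cyclique, alors $G'\subseteq N$, donc $G'$ est cyclique. La projection $\mathcal{G}\twoheadrightarrow G$ restreinte � $\mathcal{G}'$ a pour image $G'$ et pour noyau $\mathcal{G}'\cap\mathcal{G}^{(2)}=\mathcal{G}''$, d'o� $\mathcal{G}'/\mathcal{G}''\simeq G'$ est cyclique; autrement dit, l'ab�lianis� topologique du pro-$2$-groupe $\mathcal{G}'$ est cyclique.

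Deuxi�mement, j'appliquerais le th�or�me de la base de Burnside au pro-$2$-groupe ferm� $\mathcal{G}'$: pour tout pro-$2$-groupe $\mathcal{H}$, le nombre minimal de g�n�rateurs topologiques est �gal � la $\mathbb{F}_2$-dimension de $\mathcal{H}/\mathcal{H}^2[\mathcal{H},\mathcal{H}]$. Pour $\mathcal{H}=\mathcal{G}'$, ce quotient s'identifie au quotient de $\mathcal{G}'/\mathcal{G}''$ par les carr�s, qui vaut $\mathbb{F}_2$ puisque $\mathcal{G}'/\mathcal{G}''\simeq G'$ est un $2$-groupe cyclique non trivial (l'hypoth�se $G$ non ab�lien garantit $G'\neq 1$). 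Il en r�sulte que $\mathcal{G}'$ est procyclique, donc ab�lien comme adh�rence d'un sous-groupe monog�ne dans une topologie s�par�e. Ainsi $\mathcal{G}''=[\mathcal{G}',\mathcal{G}']=1$, puis $\mathcal{G}=\mathcal{G}/\mathcal{G}''=G$, ce qui conclut.

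L'obstacle principal me semble conceptuel plut�t que calculatoire: il s'agit de justifier avec soin le cadre pro-$p$, c'est-�-dire que $\mathcal{G}$ est bien un pro-$2$-groupe topologiquement de type fini, que son ab�lianis� topologique s'identifie � $\mathrm{C}_{\mathbf{k},2}$, et que la version pro-$p$ du th�or�me de Burnside (et non simplement celle des $p$-groupes finis) s'applique � un sous-groupe ferm� comme $\mathcal{G}'$. Une fois ce formalisme �tabli, l'enchainement alg�brique \emph{$G$ m�tacyclique $\Rightarrow$ $G'$ cyclique $\Rightarrow$ $\mathcal{G}'$ procyclique $\Rightarrow$ $\mathcal{G}'$ ab�lien} est essentiellement imm�diat.
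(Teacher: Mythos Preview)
Your argument is correct and rests on exactly the same idea as the paper's: $G$ m\'etacyclique $\Rightarrow$ $G'$ cyclique, then Burnside applied to the Galois group above $\mathbf{k}_2^{(1)}$ forces that group to be (pro)cyclic, hence abelian.

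The only difference is the framework. You work with the full pro-$2$ group $\mathcal{G}=\mathrm{Gal}(\mathbf{k}_2^{(\infty)}/\mathbf{k})$ and apply the pro-$p$ Burnside theorem to the closed subgroup $\mathcal{G}'$. The paper instead applies the \emph{finite} Burnside basis theorem to $\mathrm{Gal}(\mathbf{k}_2^{(3)}/\mathbf{k}_2^{(1)})$: its abelianisation is $\mathrm{Gal}(\mathbf{k}_2^{(2)}/\mathbf{k}_2^{(1)})\simeq G'$, which is cyclic, hence the group is cyclic and $\mathbf{k}_2^{(3)}=\mathbf{k}_2^{(2)}$. This sidesteps entirely the pro-$p$ technicalities you flagged as ``l'obstacle principal'', at the modest cost of only obtaining $\mathbf{k}_2^{(3)}=\mathbf{k}_2^{(2)}$ (which of course implies termination of the tower by induction). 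Your version is a bit more conceptual and gets the full conclusion in one stroke; the paper's version is more elementary and needs nothing beyond finite $2$-groups.
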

\begin{proof}
Posons
$G=\mathrm{Gal}(\mathbf{k}_2^{(3)}/\mathbf{k}_2^{(1)})$,
alors le groupe des commutateurs $G'$ est égal à
$\mathrm{Gal}(\mathbf{k}_2^{(3)}/\mathbf{k}_2^{(2)})$, ainsi le
groupe quotient $G/G'$ est isomorphe à $G'$
le groupe des commutateurs de $G$, qui est cyclique, suivant le
corollaire précédent. Le théorème de la base de Burnside entraîne
que $G$ est un groupe cyclique et $G'=1$. Ceci
veut dire $\mathbf{k}_2^{(3)}=\mathbf{k}_2^{(2)}.$
\end{proof}
\begin{lem}\label{013}
On garde les notations précédentes. Si $G$ est un $2$-groupe
 métacyclique  non abélien, alors
\begin{enumerate}[\rm\indent 1.]
\item $\mathrm{V}_{G \rightarrow H_{i, 2}}(xG') =\left\{
            \begin{array}{ll}
              a^2H'_{i, 2}, & \hbox{si $x=a$;} \\
             H'_{i, 2}, & \hbox{si $x=b^2$.}
            \end{array}
          \right.
$ avec $i=1$, $2$.
\item $\mathrm{V}_{G \rightarrow H_{3, 2}}(xG') =\left\{
            \begin{array}{ll}
               H'_{3, 2}, & \hbox{si $x=a$ et $G=G_{1}$ ou $G_{3}$;}\\
               a^{2^{n-3}}H'_{3, 2}, & \hbox{si $x=a$ et $G=G_{2}$;}\\
               a^{2^{n-4}}H'_{3, 2}, & \hbox{si $x=a$ et $G=G_{4}$;}\\
              a^{2^{n-3}}H'_{3, 2}, & \hbox{si $x=b^2$ et $G=G_{3}$;}\\
              H'_{3, 2}, & \hbox{si $x=b^2$ et $G=G_{1}$, $G_{2}$ ou $G_{4}$.}
            \end{array}
          \right.
$
\item $\mathrm{V}_{G \rightarrow H_{i, 4}}(xG') =\left\{
            \begin{array}{ll}
             a^{2^{n-3}}H'_{i, 4}, & \hbox{si $x=a$ et $G=G_{4}$;}\\
             H'_{i, 4}, & \hbox{si $x=a$ et $G=G_{j}$  avec $j\neq4$;} \\
             H'_{i, 4}, & \hbox{si $x=b$ et $G\neq G_{3}$avec $i\neq 3$;} \\
             a^{2^{n-3}}H'_{i, 4}, & \hbox{si $x=b$ et $G=G_{3}$ avec $i\neq 3$;}\\
             H'_{3, 4}, & \hbox{si $x=b$ et $G=G_{1}$, $G_{2}$;}\\
             a^{2^{n-3}}H'_{3, 4}, & \hbox{si $x=b$ et $G=G_{3}$;}\\
             a^{2^{n-3}(-1+2^{n-5})}H'_{3, 4}, & \hbox{si $x=b$ et $G=G_{4}$.}
            \end{array}
          \right.
$ avec $i=1$, $2$ ou $3$.
\end{enumerate}
\end{lem}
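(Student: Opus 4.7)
The plan is to apply the transfer formula \eqref{JNT7} systematically, case by case, to each pair $(H_{i,j}, x)$ with $x$ one of the generators $a, b, b^2$. Corollary \ref{012} supplies an explicit description of each $H_{i,j}$ as a subgroup generated by powers of $a$ and $b$, together with its derived subgroup $H'_{i,j}$, and the preceding lemma supplies the basic commutators $[a,b], [a,b^2], [a^2,b]$ that appear after simplification. For each pair, the first step is to check whether $x \in H_{i,j}$, then compute $f = [\langle x\rangle H_{i,j}: H_{i,j}]$ and choose a set of coset representatives of $G/\langle x\rangle H_{i,j}$.

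Three generic situations arise. \emph{(a)} If $x \notin H_{i,j}$ and $[G:H_{i,j}] = 2$, then $\langle x\rangle H_{i,j} = G$, $f = 2$, there is a single representative, and the formula collapses to $\mathrm{V}(xG') = x^2 H'_{i,j}$. \emph{(b)} If $x \in H_{i,j}$ of index $2$, then $f = 1$ and, with representatives $\{1, y\}$ ($y \notin H_{i,j}$), the formula becomes $\mathrm{V}(xG') = x\cdot x^y \cdot H'_{i,j} = x^2 [x, y] H'_{i,j}$. \emph{(c)} For $H_{i,4}$ of index $4$, one has either two or four factors according to whether $\langle x\rangle H_{i,4} = G$ or $H_{i,4}$, and representatives can be chosen inside $\{1, b, b^2, b^3\}$.

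Executing these cases: for $H_{1,2}$ and $H_{2,2}$, $a \notin H_{i,2}$ gives $\mathrm{V}(aG') = a^2 H'_{i,2}$ at once, while $b^2 \in H_{i,2}$ leads via (b) to $b^4 [b^2, a] H'_{i,2}$; the preceding lemma shows $[b^2, a]$ and $b^4 = z_1$ lie in $H'_{i,2} = \langle a^4\rangle$ (using $n \geq 5$ to place $a^{2^{n-3}}$ inside $\langle a^4\rangle$), so the result is $H'_{i,2}$. For $H_{3,2} = \langle a, b^2\rangle$, both $a$ and $b^2$ belong to $H_{3,2}$; taking $y = b$, (b) yields $\mathrm{V}(aG') = a^2[a,b] H'_{3,2} = z_2 H'_{3,2}$ since $[a,b] = a^{-2}z_2$, and $\mathrm{V}(b^2 G') = b^4[b^2, b]H'_{3,2} = z_1 H'_{3,2}$. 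Reading the values of $z_1, z_2$ from Table \ref{Tabl3} produces the four subcases. For the index-$4$ subgroups, Corollary \ref{012} gives $H'_{i,4} = 1$, so the transfer is simply a product of conjugates of $x$ or $x^f$. For instance, for $x = a$ and $H_{1,4} = \langle a\rangle$, iterating $a \mapsto a^{-1}z_2$ under conjugation by $b$ and multiplying the four conjugates $a, a^b, a^{b^2}, a^{b^3}$ produces, modulo $a^{2^{n-2}} = 1$, either $1$ (if $z_2 = 1$ or $z_2 = a^{2^{n-3}}$) or $a^{2^{n-3}}$ (if $G = G_4$, where $z_2 = a^{2^{n-4}}$). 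Analogous case analysis handles $H_{2,4}$, $H_{3,4}$, and the generator $x = b$, where the coset representatives are adapted so that $b$ lies outside $H_{i,4}$ for $i = 1, 2$ but inside $H_{3,4}$.

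The main obstacle is the volume of bookkeeping rather than any single conceptual step: twelve pairs $(H_{i,j}, x)$, each branching over the four metacyclic groups $G_1, \ldots, G_4$, with exponent arithmetic modulo $2^{n-2}$ that depends on $n$. The most delicate subcase is $G_4$: conjugation by $b$ sends $a$ to $a^{-1+2^{n-4}}$, whose iterated powers involve cross-terms $2^{2n-7}, 2^{2n-8}$ that only vanish modulo $2^{n-2}$ thanks to the hypothesis $n \geq 6$ for $G_4$. Careful reduction using $a^{2^{n-2}} = 1$ at each step, together with the explicit commutators from the preceding lemma, brings every case into the form announced in the statement.
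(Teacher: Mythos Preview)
Your approach is exactly the one taken in the paper: apply the transfer formula \eqref{JNT7} case by case, distinguishing whether $x$ lies in $H_{i,j}$ or not, and then reduce the resulting products of conjugates using the commutator values from the preceding lemma together with the explicit descriptions of $H'_{i,j}$ from Corollary~\ref{012}. The paper in fact only writes out the computation for part~1 (the case $i=1,2$, reaching $\mathrm{V}(b^2G')=b^4[b^2,a]H'_{i,2}$ and then checking that $a^{-2^{n-3}}\in\langle a^4\rangle$ when $n\geq5$) and declares that the remaining cases ``se traitent de la m\^eme fa\c{c}on''; your outline covers more of the casework explicitly. One small slip: you write that $b$ lies \emph{inside} $H_{3,4}=\langle a^2,b^2\rangle$, but in fact $b\notin H_{3,4}$, so for that case $f=2$ with two representatives (e.g.\ $\{1,a\}$), giving $\mathrm{V}(bG')=b^2(b^2)^aH'_{3,4}=b^4[b^2,a]$, which then reduces to the announced values via the preceding lemma.
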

\begin{proof}
Nous montrons ici le premier résultat, les autre se traitent de la
même façons. Nous avons $H_{1, 2}=\langle b, a^2\rangle$ et $H_{2,
2}=\langle ab, a^2\rangle$, alors $a\notin H_{i, 2}$, où $i=1$, $2$,
ce qui donne que $\mathrm{V}_{G \rightarrow H_{i, 2}}(aG')=
              a^2H'_{i, 2}.
              $ Maintenant, remarquons que $b^2\in H_{i, 2}$, où $i=1$, $2$; et $G/ H_{i, 2}=\langle aH'_{i, 2}\rangle$, alors
\begin{align*}
    \mathrm{V}_{G \rightarrow H_{i, 2}}(b^2G')&=b^4[b^2,a]H'_{i, 2}\\
                                              &=\left\{\begin{tabular}{ll}
                      $H'_{i, 2}$, & \text{si} $G=G_{1}, G_{2}$; \\
                      $ a^{-2^{n-3}}H'_{i, 2}$ & \text{si} $G=G_{3}$;\\
                      $ a^{-2^{n-3}(-1+2^{n-5})}H'_{i, 2}$, & \text{si} $G=G_{4}$.
          \end{tabular}
          \right.
\end{align*}
Si $G=G_{3}$, $G_{4}$, alors $n\geq 5$ et $H'_{i, 2}=\langle
a^4\rangle$; on conclut alors que les deux éléments $a^{-2^{n-3}}$
et $ a^{-2^{n-3}(-1+2^{n-5})}$ appartiennent à $H'_{i, 2}$.
Finalement on a:
$$
\mathrm{V}_{G \rightarrow H_{i, 2}}(b^2G')= H'_{i, 2}.
$$
\end{proof}
\begin{thm}\label{004}
On garde les notations précédentes. Si $G$ est métacyclique non modulaire non abélien, alors
\begin{enumerate}[\rm\indent 1.]
\item $\ker j_{\mathbf{k} \rightarrow \mathbf{K}_{i, 2}}=\{1, \mathfrak{d}^2\}$ avec $i=1$ ou $2$.
\item $\ker j_{\mathbf{k} \rightarrow \mathbf{K}_{3, 2}}=\left\{
                                                           \begin{array}{ll}
                                                             \{1, \mathfrak{c}, \mathfrak{d}^2, \mathfrak{c}\mathfrak{d}^2\}, & \hbox{si $G=G_{1}$;} \\
                                                             \{1, \mathfrak{d}^2\}, & \hbox{si $G=G_{2}$ ou $G_{4}$;} \\
                                                             \{1, \mathfrak{c}\}, & \hbox{si $G=G_{3}$.}
                                                           \end{array}
                                                         \right.
$
\item $\ker j_{\mathbf{k} \rightarrow \mathbf{K}_{i, 4}}=\left\{
                                                           \begin{array}{ll}
                                                             \mathrm{C}_{\mathbf{k}, 2}, & \hbox{si $G=G_{1}$ ou $G_{2}$;} \\
                                                             \{1, \mathfrak{c}, \mathfrak{d}^2, \mathfrak{c}\mathfrak{d}^2\}, & \hbox{si $G=G_{3}$;} \\
                                                            \{1, \mathfrak{d}, \mathfrak{d}^2, \mathfrak{d}^3\}, & \hbox{si $G=G_{4}$,}
                                                           \end{array}
                                                         \right.$ avec $i=1$, $2$.
\item $\ker j_{\mathbf{k} \rightarrow \mathbf{K}_{3, 4}}=\left\{
                                                           \begin{array}{ll}
                                                             \mathrm{C}_{\mathbf{k}, 2}, & \hbox{si $G=G_{1}$ ou $G_{2}$;} \\
                                                             \{1, \mathfrak{c}, \mathfrak{d}^2, \mathfrak{c}\mathfrak{d}^2\}, & \hbox{si $G=G_{3}$ ou $G=G_{4}$.}
                                                           \end{array}
                                                         \right.$
\item La capitulation des $2$-classes d'idéaux de $\mathbf{k}$ est de
type $(2A, 2A, 2A)$ ou bien $(2A, 2A, 4)$.
\end{enumerate}
\end{thm}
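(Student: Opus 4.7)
The plan is to reduce the computation of $\ker j_{\mathbf{k} \to \mathbf{K}_{i,j}}$ to the computation of $\ker V_{G \to H_{i,j}}$ via class field theory, as recalled in the introduction, and then to use the identification $\mathrm{C}_{\mathbf{k}, 2} \simeq G/G'$ with $\mathfrak{c} \leftrightarrow aG'$ and $\mathfrak{d} \leftrightarrow bG'$. Since $a^2, b^4 \in G'$, the abelian group $G/G'$ has exactly eight cosets $\{a^i b^j G' : 0 \le i \le 1, 0 \le j \le 3\}$, and the transfer $V$ is a homomorphism; hence $\ker V$ is determined by the images of $aG'$ and $bG'$.

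For each subfield $\mathbf{K}_{i,j}$ and each of the four groups $G_1, G_2, G_3, G_4$, I would tabulate $V(aG')$ and $V(bG')$ (or $V(b^2 G')$) using Lemma \ref{013}, and then reduce these elements modulo $H'_{i,j}$ using the explicit description provided by Corollary \ref{012}. Concretely, for point 1, Lemma \ref{013}(1) gives $V_{G \to H_{i,2}}(aG') = a^2 H'_{i,2}$; since $H'_{i,2} = \langle a^4\rangle$ and $a$ has order $2^{n-2} \ge 4$, the element $a^2$ does not lie in $H'_{i,2}$, so $aG' \notin \ker V$. Also $V(b^2 G') = H'_{i,2}$ is trivial, so $b^2 G' \in \ker V$. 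To exclude that the kernel is larger (i.e.\ contains $bG'$ or $abG'$), I would perform the direct transfer computation along the transversal $\{1, a\}$ of $G/H_{i,2}$: one obtains $V(bG') = b^2 [b, a]H'_{i,2} = b^2 \cdot a^2 z_2^{-1} H'_{i,2}$, which is nontrivial because $b^2 \notin \langle a\rangle \cdot H'_{i,2}$. Hence $\ker V = \{G', b^2 G'\}$, giving $\{1, \mathfrak{d}^2\}$.

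Points 2, 3, and 4 are handled by the same strategy applied to the corresponding transfers in Lemma \ref{013}(2) and (3). For each of the four families of groups, one checks whether the image of a generator — an element of the form $a^k$ with $k \in \{0, 2^{n-3}, 2^{n-4}, 2^{n-3}(-1+2^{n-5})\}$ — belongs to $H'_{i,j}$, which is either trivial or $\langle a^4\rangle$. The delicate bookkeeping in the $G_4$ case reduces to the identity $a^{2^{n-2}} = 1$, so $2^{n-3}(-1+2^{n-5}) \equiv -2^{n-3} \pmod{2^{n-2}}$, placing this element in $\langle a^4\rangle$ only when $n \ge 5$. Finally, for point 5, I would intersect each kernel $\ker V_{G \to H_{i,2}}$ with the image of $H_{i,2}/G'$ in $G/G'$ (which corresponds under class field theory to $\mathcal{N}_{\mathbf{K}_{i,2}/\mathbf{k}}(\mathrm{C}_{\mathbf{K}_{i,2}, 2})$). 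In every case this intersection contains $b^2 G'$ or $aG'$ (both of which lie in the relevant $H_{i,2}/G'$), so all three unramified quadratic extensions are of type $A$; combining with the cardinalities of the kernels computed in points 1 and 2 yields the type $(2A, 2A, 4)$ if $G = G_1$ and $(2A, 2A, 2A)$ otherwise.

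The main obstacle will be the sheer volume of case analysis: four metacyclic non-modular groups intersected with six subfields and eight cosets to test, each requiring an exponent manipulation modulo $2^{n-2}$. The arithmetic is routine once the generators of $H_{i,j}$ and $H'_{i,j}$ are in hand (from the proof of Corollary \ref{012}), but the $G_4$ case is the trickiest because of the mixed exponents $2 - 2^{n-4}$ appearing in $[a, b]$, which require the hypothesis $n \ge 6$ to guarantee parity of the reduced exponents.
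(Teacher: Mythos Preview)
Your approach is essentially the paper's own: reduce to transfer kernels via Artin reciprocity, read off $V_{G\to H_{i,j}}(aG')$ and $V_{G\to H_{i,j}}(b^2G')$ (or $V(bG')$) from Lemma~\ref{013}, reduce modulo the explicit $H'_{i,j}$ of Corollary~\ref{012}, and for part~5 intersect with $H_{i,2}/G'$. One small correction: your blanket claim that $H'_{i,j}$ is ``either trivial or $\langle a^4\rangle$'' is not quite right for $H'_{3,2}$ in the $G_4$ case, where Corollary~\ref{012} gives $\langle a^{2^{n-3}}\rangle$ (a proper subgroup of $\langle a^4\rangle$ since $n\ge 6$); this does not affect the method, only the bookkeeping in that one case.
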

\begin{proof}
Nous montrons seulement le résultat 2. les autres se montrent de la
même façon. D'après le lemme précédent on a
$$\mathrm{V}_{G \rightarrow H_{3, 2}}(xG') =\left\{
            \begin{array}{ll}
               H'_{3, 2}, & \hbox{si $x=a$ et $G=G_{1}$ ou $G_{3}$;}\\
               a^{2^{n-3}}H'_{3, 2}, & \hbox{si $x=a$ et $G=G_{2}$;}\\
               a^{2^{n-4}}H'_{3, 2}, & \hbox{si $x=a$ et $G=G_{4}$;}\\
              a^{2^{n-3}}H'_{3, 2}, & \hbox{si $x=b^2$ et $G=G_{3}$;}\\
              H'_{3, 2}, & \hbox{si $x=b^2$ et $G=G_{1}$, $G_{2}$ ou $G_{4}$,}
            \end{array}
          \right.
$$
or le corollaire \ref{012}, montre que $H'_{3, 2}=1$, alors
$$\mathrm{V}_{G \rightarrow H_{3, 2}}(xG') =\left\{
            \begin{array}{ll}
               1, & \hbox{si $x=a$ et $G=G_{1}$ ou $G_{3}$;}\\
               a^{2^{n-3}}, & \hbox{si $x=a$ et $G=G_{2}$;}\\
               a^{2^{n-4}}, & \hbox{si $x=a$ et $G=G_{4}$;}\\
              a^{2^{n-3}}, & \hbox{si $x=b^2$ et $G=G_{3}$;}\\
             1, & \hbox{si $x=b^2$ et $G=G_{1}$, $G_{2}$ ou $G_{4}$.}
            \end{array}
          \right.
$$
Les définitions des $G_{2}$, $G_{3}$ et $G_{4}$, exigent que
$n\geq 5$, alors que $a^{2^{n-3}}$ et  $a^{2^{n-4}}$ sont deux
éléments différents de $1$. Ce qui nous permet de conclure que
$$\ker \mathrm{V}_{G \rightarrow H_{3, 2}}=\left\{
                                                           \begin{array}{ll}
                                                             \{G', aG', b^2G', ab^2G'\}, & \hbox{si $G=G_{1}$;} \\
                                                             \{G', b^2G'\}, & \hbox{si $G=G_{2}$ ou $G_{4}$;} \\
                                                             \{G', aG'\}, & \hbox{si $G=G_{3}$.}
                                                           \end{array}
                                                         \right.
$$
Par la loi de réciprocité d'Artin, nous trouvons le résultat 2.\\
\indent 5. les résultats 1. et 2. du lemme \ref{013} donnent:
$$
\ker{\rm V}_{G\rightarrow H_{i, 2}}\cap H_{i, 2}/G'=\{G', b^2G'\}\cap \langle bG'\rangle=\{G', b^2G'\}, \hbox{si $i=1$, $2$;}
$$ et
$
\ker{\rm V}_{G\rightarrow H_{3, 2}}\cap H_{3, 2}/G'=$
$$
                                          \left\{
                                            \begin{array}{ll}
                                             \langle aG', b^2G'\rangle\cap \langle aG', b^2G'\rangle=\langle aG', b^2G'\rangle, & \hbox{si $G=G_{1}$;} \\
                                             \{G', aG'\}\cap \langle aG', b^2G'\rangle=\{G', aG'\}, & \hbox{si $G=G_{3}$;}\\
                                             \{G', b^2G'\}\cap \langle aG', b^2G'\rangle=\{G', b^2G'\}, & \hbox{si non.} \\
                                            \end{array}
                                          \right.
$$
Par la loi de réciprocité d'Artin, nous trouvons que $$|\ker
j_{\mathbf{k} \rightarrow K_{i, 2}}\cap\mathcal{N}_{K_{i,
2}/\mathbf{k}}(C_{K_{i, 2}})|>1,$$ où $i=1$, $2$ et $3$. Les
résultats 1. et 2. de ce théorème entraînent que la capitulation des
$2$-classes d'idéaux de $\mathbf{k}$ est de type $(2A, 2A, 2A)$ ou
bien $(2A, 2A, 4)$.
\end{proof}
\begin{cor}\label{016}
On garde les notations précédentes. Si $G$ est métacyclique non modulaire non abélien
et seulement $\mathfrak{d}^2$ et son carré capitulent dans $\mathbf{K}_{3, 2}$, alors les propriétés suivantes sont équivalentes:
\begin{enumerate}[\rm\indent 1.]
\item $G=G_{2}$;
\item la tour des $2$-corps de classes de Hilbert de $\mathbf{K}_{3, 2}$ s'arrête en première étape;
\item $h(\mathbf{K}_{i, 4})=\dfrac{h(\mathbf{K}_{3, 2})}{2}$ avec $i=1$, $2$ ou $3$.
\end{enumerate}
\end{cor}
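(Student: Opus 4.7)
The plan is to reduce the problem to the choice between $G = G_2$ and $G = G_4$, and then distinguish these two cases using the explicit computation of $H_{3,2}'$ from Corollary~\ref{012}. The hypothesis that only $\mathfrak{d}^2$ and its (trivial) square capitulate in $\mathbf{K}_{3,2}$ amounts to $\ker j_{\mathbf{k} \rightarrow \mathbf{K}_{3,2}} = \{1, \mathfrak{d}^2\}$, and Theorem~\ref{004}(2) then forces $G \in \{G_2, G_4\}$. Each of the three equivalences will follow once I show that the corresponding condition selects $G = G_2$ inside this two-element subfamily.

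For $1 \Leftrightarrow 2$, I would translate the tower assertion into group theory via class field theory. Since $G$ is metacyclic hence metabelian, $H_{3,2}'' = 1$. The corollary following Theorem~\ref{002} says the $2$-tower of $\mathbf{k}$ stops at $\mathbf{k}_2^{(2)}$, so the subfields of $\mathbf{k}_2^{(2)}$ fixed by $H_{3,2}'$ and $H_{3,2}''$ are exactly $(\mathbf{K}_{3,2})_2^{(1)}$ and $(\mathbf{K}_{3,2})_2^{(2)}$ respectively; in particular $\mathrm{C}_{(\mathbf{K}_{3,2})_2^{(1)},\, 2} \cong H_{3,2}'$. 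The $2$-tower of $\mathbf{K}_{3,2}$ therefore stops at the first step iff $H_{3,2}' = 1$, and by Corollary~\ref{012} this holds iff $G \neq G_4$, that is, iff $G = G_2$ in our subfamily.

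For $1 \Leftrightarrow 3$, I would compute both sides of the equality directly. Corollary~\ref{012} gives $H_{1,4}' = H_{2,4}' = H_{3,4}' = 1$ in every metacyclic case, so $h(\mathbf{K}_{i,4}) = |H_{i,4}| = 2^{n-2}$ for $i = 1, 2, 3$, while $h(\mathbf{K}_{3,2}) = |H_{3,2}|/|H_{3,2}'| = 2^{n-1}/|H_{3,2}'|$. The relation $h(\mathbf{K}_{i,4}) = h(\mathbf{K}_{3,2})/2$ thus becomes $|H_{3,2}'| = 1$, and Corollary~\ref{012} once again identifies this with $G = G_2$.

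The only delicate point is the class field theoretic dictionary used in $1 \Leftrightarrow 2$: one has to argue that the second Hilbert $2$-class field of $\mathbf{K}_{3,2}$ sits inside $\mathbf{k}_2^{(2)}$, which is precisely where the metabelianness of $G$ together with the fact that the $2$-tower of $\mathbf{k}$ already stops at $\mathbf{k}_2^{(2)}$ does all the work. Once this identification is made, both equivalences reduce to reading off the value of $|H_{3,2}'|$ from Corollary~\ref{012}, and the proof is essentially bookkeeping.
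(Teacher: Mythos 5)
Your proposal is correct, but it takes a partly different route from the paper's. The paper never proves $1 \Leftrightarrow 2$ directly: it gets $2 \Leftrightarrow 3$ by quoting Proposition 7 of \cite{BeLeSn98} (applicable because Theorem \ref{002} guarantees that the $2$-class group of $\mathbf{K}_{3,2}$ has rank $2$, the $\mathbf{K}_{i,4}$ being its three unramified quadratic extensions), and then proves $1 \Leftrightarrow 3$ by comparing class numbers, much as in your second computation. You instead prove $1 \Leftrightarrow 2$ by working inside $\mathbf{k}_2^{(2)}$: using that $G$ is metabelian and that the $2$-tower of $\mathbf{k}$ stops at $\mathbf{k}_2^{(2)}$ (the corollary following Theorem \ref{002}), you identify $(\mathbf{K}_{3,2})_2^{(1)}$ with the fixed field of $H_{3,2}'$ and conclude that the tower of $\mathbf{K}_{3,2}$ stops at the first step iff $H_{3,2}'=1$, which Corollary \ref{012} translates into $G=G_2$ inside the subfamily $\{G_2, G_4\}$ singled out by Theorem \ref{004}. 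This buys independence from the external reference, at the price of justifying the dictionary carefully: one needs $(\mathbf{K}_{3,2})_2^{(1)}\subseteq \mathbf{k}_2^{(2)}$ (it is abelian unramified over $\mathbf{K}_{3,2}$, hence over $\mathbf{k}_2^{(1)}$), the ``if'' direction uses that the $2$-class group of $\mathbf{k}_2^{(2)}$ is trivial, and for the ``only if'' direction a nontrivial $H_{3,2}'$ has an index-$2$ subgroup whose fixed field is an unramified quadratic, hence abelian, extension of $(\mathbf{K}_{3,2})_2^{(1)}$; your sketch of this delicate point is essentially right. Incidentally, your value $h(\mathbf{K}_{i,4})=|H_{i,4}|=2^{n-2}$ is the correct one: the paper's own proof slips here (it takes $[G':H_{i,4}']$ to be the order of $a^4$, i.e. $2^{n-4}$, instead of $|G'|=2^{n-3}$, and prints $h(\mathbf{K}_{i,4})=2^{n-3}$, which would contradict its stated conclusion), whereas your figures $2^{n-2}$ versus $h(\mathbf{K}_{3,2})/2=2^{n-2}$ for $G_2$ and $2^{n-3}$ for $G_4$ yield exactly the claimed equivalence.
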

\begin{proof}
2. $\Leftrightarrow$ 3. Comme $G$ est un groupe métacyclique non modulaire non abélien et seulement $\mathfrak{d}^2$ et son carré capitulent dans $\mathbf{K}_{3, 2}$, alors $G$ est un groupe d'ordre $>16$ et le rang du $2$-groupe de classes de $\mathbf{K}_{3, 2}$ est égal à $2$ (théorème \ref{002}), alors la proposition 7 du \cite{BeLeSn98}, entraîne que la tour des $2$-corps de classes de Hilbert de $\mathbf{K}_{3, 2}$ s'arrête en première étape si, et seulement si  $h(\mathbf{K}_{i, 4})=\dfrac{h(\mathbf{K}_{3, 2})}{2}$ avec $i=1$, $2$ ou $3$.\\
1. $\Leftrightarrow$ 3. Puisque seulement la classe $\mathfrak{d}^2$
et son carré capitulent dans $\mathbf{K}_{3, 2}$, alors $G=G_{2}$
ou $G=G_{4}$; or si $2^n$ est l'ordre de $G$, nous avons d'une part
déjà montré (p. \pageref{2:023}) que
$$
H_{3, 2}/H_{3, 2}'\simeq\left\{
                     \begin{array}{ll}
                       \ZZ/2\ZZ\times \ZZ/2^{n-2}\ZZ, & \hbox{si $G=G_{2}$;} \\
                       \ZZ/2\ZZ\times \ZZ/2^{n-3}\ZZ, & \hbox{si $G=G_{4}$.}
                     \end{array}
                   \right.
$$
Ceci nous donne que
$$
\dfrac{h(\mathbf{K}_{3, 2})}{2}=\left\{
                                  \begin{array}{ll}
                                    2^{n-2}, & \hbox{si $G=G_{2}$;} \\
                                    2^{n-3}, & \hbox{si $G=G_{4}$.}
                                  \end{array}
                                \right.
$$
D'autre part on a pour $i=1$, $2$, $3$:
$$
h(\mathbf{K}_{i, 4})=[H_{i, 4}:H'_{i, 4}]=[H_{i, 4}:G'][G':H'_{i, 4}].
$$
D'après le corollaire \ref{012},  $H'_{1, 4}=H'_{2, 4}=H'_{3, 2}=1$ et $G'=\langle a^2\rangle$, alors $[G':H'_{i, 4}]$ est égal à $2^{n-4}$, l'ordre de $a^4$. Il est facile de voir que $[H_{i, 4}:G']=2$, ce qui implique que $h(\mathbf{K}_{i, 4})=2^{n-3}$. Finalement, $G=G_{2}$ si, et seulement si, $h(\mathbf{K}_{i, 4})=\dfrac{h(\mathbf{K}_{3, 2})}{2}$ avec $i=1$, $2$ ou $3$.
\end{proof}
\subsection{Le cas non métacyclique}
Dans cette sous-section, nous étudions le problème de capitulation
dans le cas où $G$ est un $2$-groupe non métacyclique.
\begin{thm}\label{005}
On garde les notations précédentes. Alors les propriétés suivantes sont équivalentes:
\begin{enumerate}[\rm\indent 1.]
\item $G$ est non métacyclique;
\item Le $2$-groupe de classes de $\mathbf{K_{3,  2}}$ est de type $(2, 2, 2)$;
\item Le $2$-rang du $2$-groupe de classes de $\mathbf{K_{3,  2}}$ est égal à $3$.
\end{enumerate}
\end{thm}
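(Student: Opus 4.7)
Le plan est de ramener le Théorème \ref{005} au Théorème \ref{2:015} via la correspondance usuelle de la théorie du corps de classes. D'abord je vérifierai que le Théorème \ref{2:015} s'applique au groupe $G = \mathrm{Gal}(\mathbf{k}_2^{(2)}/\mathbf{k})$ : l'abélianisé $G/G'$ est de type $(2,4)$ par hypothèse, et $G$ est métabélien puisque $G' = \mathrm{Gal}(\mathbf{k}_2^{(2)}/\mathbf{k}_2^{(1)})$ est isomorphe par le symbole d'Artin au $2$-groupe de classes de $\mathbf{k}_2^{(1)}$, donc abélien. Le sous-groupe maximal $M$ du Théorème \ref{2:015} --- l'unique pour lequel $M/G'$ est de type $(2,2)$ --- est exactement $H_{3,2} = \langle a, b^2, G'\rangle$ défini au début de la Section \ref{009}, puisque les deux autres sous-groupes maximaux $H_{1,2}$ et $H_{2,2}$ donnent des quotients cycliques d'ordre $4$ modulo $G'$.

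Ensuite j'identifierai les invariants arithmétiques des conditions 2 et 3 avec l'invariant théorique du groupe du Théorème \ref{2:015}. Comme $\mathbf{K}_{3,2}/\mathbf{k}$ est non ramifiée, le second corps de $2$-classes de Hilbert $\mathbf{k}_2^{(2)}$ coïncide avec $(\mathbf{K}_{3,2})_2^{(1)}$, et la loi de réciprocité d'Artin fournit alors l'isomorphisme standard $\mathrm{C}_{\mathbf{K}_{3,2},2} \simeq H_{3,2}/H_{3,2}'$. Sous cette identification, la condition 2 se traduit par le fait que $H_{3,2}/H_{3,2}'$ est de type $(2,2,2)$, et la condition 3 exprime que ce groupe abélien a $2$-rang égal à $3$ ; or par le théorème de la base de Burnside ce dernier $2$-rang coïncide avec $d(H_{3,2})$.

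Ces traductions étant faites, les trois conditions du Théorème \ref{005} coïncident avec les trois conditions du Théorème \ref{2:015} appliqué au couple $(G, H_{3,2})$, d'où l'équivalence voulue. L'étape principale --- et la seule qui demande un peu de vigilance --- est l'identification $\mathrm{C}_{\mathbf{K}_{3,2},2} \simeq H_{3,2}/H_{3,2}'$ ; tout le contenu théorique sur les groupes a déjà été établi dans la Section \ref{JNT1}, si bien que je n'anticipe aucun obstacle sérieux dans la rédaction.
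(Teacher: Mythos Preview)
Your approach is essentially the paper's: the proof there reads in full ``Cons\'equence directe du Th\'eor\`eme~\ref{2:015}'', and you have correctly identified that $G$ is m\'etab\'elien (since $G'\simeq \mathrm{C}_{\mathbf{k}_2^{(1)},2}$) and that the subgroup $M$ of Th\'eor\`eme~\ref{2:015} is $H_{3,2}$.

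There is, however, one slip in your justification of the key identification $\mathrm{C}_{\mathbf{K}_{3,2},2}\simeq H_{3,2}/H_{3,2}'$. The equality $\mathbf{k}_2^{(2)}=(\mathbf{K}_{3,2})_2^{(1)}$ is \emph{false} in general: one has $\mathrm{Gal}(\mathbf{k}_2^{(2)}/\mathbf{K}_{3,2})=H_{3,2}$, and precisely in the non-metacyclic case Lemme~\ref{2:022} gives $H_{3,2}'=\gamma_3(G)\neq 1$, so $H_{3,2}$ is non-abelian and $\mathbf{k}_2^{(2)}/\mathbf{K}_{3,2}$ cannot be abelian. What does hold is the inclusion $(\mathbf{K}_{3,2})_2^{(1)}\subset\mathbf{k}_2^{(2)}$: since $\mathbf{k}_2^{(1)}\subset(\mathbf{K}_{3,2})_2^{(1)}$ and $(\mathbf{K}_{3,2})_2^{(1)}/\mathbf{k}_2^{(1)}$ is an abelian unramified $2$-extension, it lies in $\mathbf{k}_2^{(2)}$. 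The fixed field of $H_{3,2}'$ in $\mathbf{k}_2^{(2)}$ is then exactly $(\mathbf{K}_{3,2})_2^{(1)}$ (the maximal abelian subextension of the unramified $2$-extension $\mathbf{k}_2^{(2)}/\mathbf{K}_{3,2}$), whence $H_{3,2}/H_{3,2}'\simeq\mathrm{Gal}\bigl((\mathbf{K}_{3,2})_2^{(1)}/\mathbf{K}_{3,2}\bigr)\simeq \mathrm{C}_{\mathbf{K}_{3,2},2}$. With this correction the rest of your argument goes through unchanged.
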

\begin{proof}
1. $\Leftrightarrow$ 2. $\Leftrightarrow$ 3. Conséquence directe du Théorème \ref{2:015}, p. \pageref{2:015}.
\end{proof}
\begin{lem}
On garde les notations précédentes. Si $G$ est un $2$-groupe non métacyclique, alors $$\mathrm{V}_{G \rightarrow H_{3, 2}}(xG')=\left\{
                                                                                \begin{array}{ll}
                                                                                  [a, b]\gamma_3(G), & \hbox{si $x=a$;} \\
                                                                                  \gamma_3(G), & \hbox{si $x=b^2$.}
                                                                                \end{array}
                                                                              \right.
$$
\end{lem}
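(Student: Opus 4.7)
Mon plan est d'appliquer directement la formule du transfer \eqref{JNT7}, en tirant parti de deux faits structurels imm�diats : $H_{3,2}$ est d'indice $2$ dans $G$, et les g�n�rateurs $a$ et $b^2$ appartiennent tous deux � $H_{3,2}$.

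Je commence par observer que, $a$ et $b^2$ �tant dans $H_{3,2}$, on a $f=[\langle g\rangle H_{3,2}:H_{3,2}]=1$ pour $g\in\{a,b^2\}$. Comme $[G:H_{3,2}]=2$, je choisis $\{1,b\}$ comme syst�me de repr�sentants de $G/H_{3,2}$, si bien que la formule \eqref{JNT7} se r�duit �
$$\mathrm{V}_{G\to H_{3,2}}(gG')=g\cdot(b^{-1}gb)\cdot H_{3,2}'.$$
Un calcul direct, utilisant l'identit� $b^{-1}ab=a[a,b]$ et le fait que $b^{-1}b^2b=b^2$, fournit alors
$$\mathrm{V}_{G\to H_{3,2}}(aG')=a^2[a,b]\cdot H_{3,2}'\quad\text{et}\quad\mathrm{V}_{G\to H_{3,2}}(b^2G')=b^4\cdot H_{3,2}'.$$

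Il reste � simplifier ces expressions modulo $H_{3,2}'$, ce qui constitue � mes yeux le point conceptuel principal, m�me s'il est bref. J'invoquerai le lemme \ref{2:022} (appliqu� � $M=H_{3,2}$) pour identifier $H_{3,2}'=\gamma_3(G)$, puis le corollaire \ref{JNT3}, point \ref{2:024}, qui garantit, pr�cis�ment dans le cas non m�tacyclique, que $a^2\equiv b^4\equiv 1\mod\gamma_3(G)$. On conclura imm�diatement que $\mathrm{V}_{G\to H_{3,2}}(aG')=[a,b]\,\gamma_3(G)$ et $\mathrm{V}_{G\to H_{3,2}}(b^2G')=\gamma_3(G)$. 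L'obstacle est donc moins calculatoire que conceptuel : c'est pr�cis�ment l'identification $H_{3,2}'=\gamma_3(G)$ qui permet aux relations tronqu�es du corollaire \ref{JNT3} d'annuler $a^2$ et $b^4$ dans le quotient, et c'est elle qui explique pourquoi l'hypoth�se de non m�tacyclicit� est indispensable.
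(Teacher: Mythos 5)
Votre preuve est correcte et suit essentiellement la m�me d�marche que celle du papier : application directe de la formule \eqref{JNT7} avec les repr�sentants $\{1,b\}$, puis simplification via le lemme \ref{2:022} ($H_{3,2}'=\gamma_3(G)$) et le point \ref{2:024} du corollaire \ref{JNT3} ($a^2\equiv b^4\equiv 1\bmod\gamma_3(G)$). Votre r�daction est m�me plus soign�e sur l'appartenance de $a$ et $b^2$ � $H_{3,2}$ (le texte du papier contient � cet endroit une coquille) ; notez seulement que le lemme \ref{2:022} suppose $G$ m�tab�lien, ce qui est automatique ici puisque $G=\mathrm{Gal}(\mathbf{k}_2^{(2)}/\mathbf{k})$.
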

\begin{proof}
Le résultat \ref{2:024} du corollaire \ref{JNT3}, p. \pageref{2:024}, donne que si $G$ est non métacyclique, alors
$$a^{2}\equiv b^{4}\equiv c^2\equiv [a, c]\equiv [b, c]\equiv
  1\mod\gamma_3(G),$$ avec $[a, b]=c.$
Si on applique la formule \eqref{JNT7}, nous trouverons que
$$\mathrm{V}_{G \rightarrow H_{3, 2}}(xG')=\left\{\begin{array}{ll}
                                                  a^2[a, b]H'_{3, 2}, & \hbox{si $x=a$;}\\
                                                  H'_{3, 2}, & \hbox{si $x=b^2$,}
                                                 \end{array}
                                                    \right.
$$
car $a\in  H_{3, 2}$ et $b^2\notin  H_{3, 2}$. Comme  $H'_{3, 2}=\gamma_3(G)$ (lemme \ref{2:022}, p. \pageref{2:022}) et $a^{2}\equiv  1\mod\gamma_3(G)$, alors
$$\mathrm{V}_{G \rightarrow H_{3, 2}}(xG')=\left\{
                                                                                \begin{array}{ll}
                                                                                  [a, b]\gamma_3(G), & \hbox{si $x=a$;} \\
                                                                                  \gamma_3(G), & \hbox{si $x=b^2$.}
                                                                                \end{array}
                                                                              \right.
$$
\end{proof}
\begin{cor}\label{006}
On garde les notations précédentes. Si $G$ est un $2$-groupe non métacyclique, alors
\begin{enumerate}[\rm\indent 1.]
\item $\ker j_{\mathbf{k} \rightarrow \mathbf{K}_{3, 2}}=\{1, \mathfrak{d}^2\}$;
\item La capitulation des $2$-classes d'idéaux de $\mathbf{k}$  dans $\mathbf{K}_{3, 2}$ est de type $2A$.
\end{enumerate}
\end{cor}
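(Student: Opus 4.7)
Le plan est d'utiliser le lemme pr�c�dent, qui donne ${\rm V}_{G\rightarrow H_{3,2}}(aG')=[a,b]\gamma_3(G)$ et ${\rm V}_{G\rightarrow H_{3,2}}(b^2G')=\gamma_3(G)$, pour en d�duire enti�rement le noyau du transfert sur $G/G'$, puis de traduire via la loi de r�ciprocit� d'Artin. Puisque le lemme ne traite que $aG'$ et $b^2G'$, je commencerai par calculer ${\rm V}_{G\rightarrow H_{3,2}}(bG')$: comme $b\notin H_{3,2}$ et $[G:H_{3,2}]=2$, on a $\langle b\rangle H_{3,2}=G$, et la formule \eqref{JNT7} avec $f=2$ et un unique repr�sentant donnera ${\rm V}_{G\rightarrow H_{3,2}}(bG')\equiv b^2\pmod{H'_{3,2}}$, en utilisant $H'_{3,2}=\gamma_3(G)$ (lemme \ref{2:022}).

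Ensuite, je transporterai tous ces calculs dans le quotient $H_{3,2}/\gamma_3(G)$, qui est de type $(2,2,2)$ d'apr�s le th�or�me \ref{2:015}, avec pour base les classes de $a$, $b^2$ et $c=[a,b]$. Cette structure garantit que $c\gamma_3(G)$, $b^2\gamma_3(G)$ et $cb^2\gamma_3(G)$ sont tous non triviaux. En utilisant la multiplicativit� du transfert sur les g�n�rateurs $aG'$ et $bG'$ de $G/G'$, j'�valuerai ${\rm V}_{G\rightarrow H_{3,2}}$ sur les huit classes de $G/G'$ et je constaterai que seules $G'$ et $b^2G'$ se trouvent dans le noyau, les six autres �tant envoy�es sur l'un des �l�ments non triviaux $c\gamma_3(G)$, $b^2\gamma_3(G)$ ou $cb^2\gamma_3(G)$.

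Pour le point 1, la loi de r�ciprocit� d'Artin, combin�e aux identifications $aG'\leftrightarrow\mathfrak{c}$ et $bG'\leftrightarrow\mathfrak{d}$, fournira alors imm�diatement $\ker j_{\mathbf{k}\rightarrow\mathbf{K}_{3,2}}=\{1,\mathfrak{d}^2\}$. Pour le point 2, il suffira de remarquer que $\mathcal{N}_{\mathbf{K}_{3,2}/\mathbf{k}}(\mathrm{C}_{\mathbf{K}_{3,2}})$ correspond via Artin � $H_{3,2}/G'$, soit $\{1,\mathfrak{c},\mathfrak{d}^2,\mathfrak{c}\mathfrak{d}^2\}$, qui contient $\mathfrak{d}^2$; l'intersection $\ker j_{\mathbf{k}\rightarrow\mathbf{K}_{3,2}}\cap\mathcal{N}_{\mathbf{K}_{3,2}/\mathbf{k}}(\mathrm{C}_{\mathbf{K}_{3,2}})$ sera donc d'ordre $2>1$, c'est-�-dire exactement la condition de Taussky pour le type $2A$.

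L'obstacle principal sera la v�rification que $c$ et $b^2$ restent lin�airement ind�pendants modulo $\gamma_3(G)$ dans le $\mathbb{F}_2$-espace vectoriel $H_{3,2}/\gamma_3(G)$: c'est cette ind�pendance qui garantit que le noyau du transfert est exactement de cardinal $2$ et ne d�g�n�re pas en un groupe plus grand. Elle d�coulera cependant directement de la structure $(2,2,2)$ donn�e par le th�or�me \ref{2:015}, ce qui rend finalement cette �tape routini�re.
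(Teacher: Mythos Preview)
Your proof is correct and follows essentially the same strategy as the paper: compute the transfer using the preceding lemma and Lemma~\ref{2:022}, identify the kernel, and translate via Artin reciprocity. Your argument is in fact slightly more complete than the paper's: the paper only records $V(aG')=c\gamma_3(G)\neq 1$ and $V(b^2G')=\gamma_3(G)$ before asserting $\ker V=\{G',b^2G'\}$, whereas you additionally compute $V(bG')=b^2\gamma_3(G)$ and use the $(2,2,2)$-structure of $H_{3,2}/\gamma_3(G)$ to rule out the a~priori possibilities that $bG'$ or $abG'$ lie in the kernel---a step the paper leaves implicit.
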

\begin{proof}
1. Soit $G=\langle a, b\rangle$  tel que $a^{2}\equiv b^{4}\equiv
1\mod\gamma_2(G)$. Les termes $c_i$ sont définis comme suit: $[a, b]
= c=c_2$ et $c_{j+1} = [b, c_j ]$. Nous avons  $G' =[c_2, c_3,
\ldots]$, $\gamma_3(G) = [c_2^2, c_3, \ldots]$ voir \cite[lemme
2]{BeLeSn97}. Ceci explique pourquoi $c=[a, b]\notin \gamma_3(G)$,
alors
$$
\ker\mathrm{V}_{G \rightarrow H_{3, 2}}= \{G', b^2G'\}.
$$
Par la loi de réciprocité d'Artin, nous trouvons
$$\ker j_{\mathbf{k} \rightarrow \mathbf{K}_{3, 2}}=\{1, \mathfrak{d}^2\}
$$
 \indent 2. Il suffit de remarquer que
$$\ker{\rm V}_{G\rightarrow H_{3, 2}}\cap H_{3, 2}/G'=\{G', b^2G'\}\cap \langle aG', b^2G', c^2G'\rangle.$$
Par la loi de réciprocité d'Artin, nous trouvons que $$|\ker
j_{\mathbf{k} \rightarrow K_{3, 2}}\cap\mathcal{N}_{K_{i,
2}/\mathbf{k}}(C_{K_{3, 2}})|>1.$$ Ce qui nous permet de  dire que
la capitulation des $2$-classes d'idéaux de $\mathbf{k}$  dans
$\mathbf{K}_{3, 2}$ est de type $2A$.
\end{proof}
\subsection{Le cas abélien}
Dans cette sous-section, nous avons deux théorèmes sur le problème
de capitulation, dans le cas où $G$ est un $2$-groupe abélien.
\begin{thm}\label{007}
On garde les notations précédentes. Alors les propriétés suivantes sont équivalentes:
\begin{enumerate}[\rm\indent 1.]
\item $G$ est  abélien;
\item Le $2$-groupe de classes de $\mathbf{K_{3,  2}}$ est de type $(2, 2)$;
\item Le $2$-nombre de classes de $\mathbf{K_{3,  2}}$ est $4$;
\item Le $2$-groupe de classes de $\mathbf{K_{i, 2}}$ est cyclique d'ordre $4$ avec $i=1$ ou $2$;
\item Le $2$-nombre de classes de $\mathbf{K_{i,  2}}$ est $4$ avec $i=1$ ou $2$;
\item La tour des $2$-corps de classes de Hilbert de $\mathbf{k}$ s'arrête en $\mathbf{k}_2^{(1)}$;
\item La capitulation des $2$-classes d'idéaux de $\mathbf{k}$ est de type $(4, 4, 4)$.
\end{enumerate}
\end{thm}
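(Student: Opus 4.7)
The plan is to use condition 1 (``$G$ abelian'') as the hub and reduce every other condition to it via the class field dictionary $H_{i,j}/H'_{i,j} \simeq \mathrm{C}_{\mathbf{K}_{i,j},2}$, together with results from the preceding subsections. The central block $1 \Leftrightarrow 2 \Leftrightarrow 4$ is an immediate translation of Theorem \ref{2:020}: with $M = H_{3,2}$, $H = H_{1,2}$ and $K = H_{2,2}$, the three group-theoretic equivalences ``$G$ abelian $\Leftrightarrow M/M'$ of type $(2,2) \Leftrightarrow H$ cyclic of order $4 \Leftrightarrow K$ cyclic of order $4$'' rewrite directly into the arithmetic statements. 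The equivalence $1 \Leftrightarrow 6$ is a definition chase: $G$ abelian $\Leftrightarrow G'=1 \Leftrightarrow \mathbf{k}_2^{(2)}=\mathbf{k}_2^{(1)} \Leftrightarrow \mathrm{C}_{\mathbf{k}_2^{(1)},2}=1$, i.e.\ the tower stops at $\mathbf{k}_2^{(1)}$.

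For $3 \Rightarrow 2$ and $5 \Rightarrow 4$ I would argue by rank-versus-order. In the first, the proof of Theorem \ref{2:015} observes that $M$ admits three distinct maximal subgroups $N_1, N_2, N_3$, hence is not cyclic, so $d(M) \geq 2$; thus $\mathrm{C}_{\mathbf{K}_{3,2},2} \simeq M/M'$ has $2$-rank at least $2$, and an abelian group of order $4$ with rank $\geq 2$ is necessarily of type $(2,2)$. In the second, $H_{i,2}/G'$ is cyclic of order $4$ by construction for $i \in \{1,2\}$, and the inclusion $H'_{i,2} \subseteq G'$ yields a surjection $H_{i,2}/H'_{i,2} \twoheadrightarrow H_{i,2}/G'$; if $|H_{i,2}/H'_{i,2}|=4$, this surjection is forced to be an isomorphism, giving cyclicity of order $4$.

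The equivalence $1 \Leftrightarrow 7$ is where the three preceding subsections combine. For $1 \Rightarrow 7$, with $G$ abelian of type $(2,4)$ the transfer $\mathrm{V}_{G \to H_{i,2}}$ on an index-$2$ subgroup acts as $x \mapsto x^2$, so its kernel is $G[2]=\{1, \mathfrak{c}, \mathfrak{d}^2, \mathfrak{c}\mathfrak{d}^2\}$, of order $4$, giving capitulation type $(4,4,4)$. For $7 \Rightarrow 1$ I would argue by exclusion of the non-abelian cases already classified: Theorem \ref{003} yields type $(2B,2B,2A)$ in the modular case; Theorem \ref{004} yields $(2A,2A,2A)$ or $(2A,2A,4)$ in the metacyclic non-modular non-abelian case; and Corollary \ref{006} forces $|\ker j_{\mathbf{k}\to\mathbf{K}_{3,2}}|=2$ in the non-metacyclic case. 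In none of these is the capitulation type $(4,4,4)$, so $G$ must be abelian. The main obstacle is purely organizational: making sure the case-by-case bookkeeping in $7 \Rightarrow 1$ exhausts every possible shape of non-abelian $G$ with $G/G'$ of type $(2,4)$.
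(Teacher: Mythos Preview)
Your proof is correct and covers all the equivalences. The core block $1 \Leftrightarrow 2 \Leftrightarrow 4$ via Theorem \ref{2:020}, the definition chase for $1 \Leftrightarrow 6$, and your rank/surjection arguments for $3 \Rightarrow 2$ and $5 \Rightarrow 4$ are all sound; in fact you supply more detail on these last two than the paper does, which simply folds them into the citation of Theorem \ref{2:020} (implicitly relying on the full classification of $M/M'$ from Theorems \ref{2:015} and \ref{2:014}).

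The one genuine divergence is the treatment of condition 7. The paper does not argue $7 \Rightarrow 1$ by exhausting the non-abelian cases; instead it establishes $6 \Leftrightarrow 7$ directly by citing an external result, \cite[lemma 1, p.~105]{Benj06}. Your route---ruling out the modular case via Theorem \ref{003}, the metacyclic non-modular non-abelian case via Theorem \ref{004}, and the non-metacyclic case via Corollary \ref{006}---is entirely valid and has the advantage of being self-contained within the paper's own machinery, exploiting precisely the capitulation computations that the preceding subsections were set up to provide. The paper's approach is shorter and avoids the case bookkeeping, at the cost of an outside reference. Your worry that the exhaustion might miss a case is unfounded: any non-abelian $G$ with $G/G'$ of type $(2,4)$ is either metacyclic (hence modular or not) or non-metacyclic, and $G$ is automatically metabelian here since $G' = \mathrm{Gal}(\mathbf{k}_2^{(2)}/\mathbf{k}_2^{(1)})$ is abelian.
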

\begin{proof}
Par la loi de réciprocité d'Artin et le théorème \ref{2:020}, p. \pageref{2:020}, nous trouvons que 1. $\Leftrightarrow$ 2. $\Leftrightarrow$ 3. $\Leftrightarrow$ 4. $\Leftrightarrow$ 5.

1. $\Leftrightarrow$ 6. évident.

6. $\Leftrightarrow$ 7. voir \cite[lemme 1, p. 105]{Benj06}.
\end{proof}
\begin{cor}\label{008}
On garde les notations précédentes. Si $G$ est abélien, alors
\begin{enumerate}[\rm\indent 1.]
\item $\ker j_{\mathbf{k} \rightarrow \mathbf{K}_{i, 2}}=\{1, \mathfrak{c}, \mathfrak{d}^2, \mathfrak{c}\mathfrak{d}^2\}$ avec $i=1$, $2$ ou
$3$;
\item $\ker j_{\mathbf{k} \rightarrow \mathbf{K}_{i, 4}}=\mathrm{C}_{\mathbf{k}, 2}$ avec $i=1$, $2$ ou $3$.
\end{enumerate}
\end{cor}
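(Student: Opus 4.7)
Le plan est d'exploiter la structure ab�lienne de $G$ pour simplifier radicalement la formule de transfert \eqref{JNT7}, puis d'appliquer la loi de r�ciprocit� d'Artin pour traduire les noyaux de transfert en noyaux de capitulation.

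Puisque $G$ est ab�lien, $G'=1$ et $G=G/G'$ est de type $(2,4)$, donc $G=\langle a,b\rangle$ avec $a^2=b^4=1$ et $ab=ba$. La premi�re �tape consiste � observer que, pour tout sous-groupe $H$ d'indice $n$ dans un groupe ab�lien $G$, la formule \eqref{JNT7} se r�duit � $\mathrm{V}_{G\rightarrow H}(g)=g^n$. En effet, les conjugu�s $x_i^{-1}g^f x_i$ �gal � $g^f$ (car $G$ est ab�lien) et le produit sur $t=[G:\langle g\rangle H]$ repr�sentants donne $g^{ft}=g^{[G:H]}$; de plus $H'=1$ puisque $H$ est aussi ab�lien.

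Deuxi�me �tape: pour les trois sous-groupes d'indice $2$, � savoir $H_{1,2}=\langle b,G'\rangle$, $H_{2,2}=\langle ab,G'\rangle$ et $H_{3,2}=\langle a,b^2,G'\rangle$, le transfert est $g\mapsto g^2$. En �num�rant les huit �l�ments de $G=\{1,a,b,b^2,b^3,ab,ab^2,ab^3\}$ on v�rifie directement que $g^2=1$ si et seulement si $g\in\{1,a,b^2,ab^2\}$ (les �l�ments d'ordre divisant $2$), tandis que $b^2=b^2$, $(b^3)^2=b^2$, $(ab)^2=b^2$, $(ab^3)^2=b^2$ sont non triviaux dans $H_{i,2}$. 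Ainsi
\[
\ker\mathrm{V}_{G\rightarrow H_{i,2}}=\{G',aG',b^2G',ab^2G'\}
\]
pour $i=1,2,3$. La loi de r�ciprocit� d'Artin (avec les correspondances $aG'\leftrightarrow\mathfrak{c}$ et $bG'\leftrightarrow\mathfrak{d}$) donne alors imm�diatement l'assertion 1.

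Troisi�me �tape: pour les trois sous-groupes d'indice $4$, le transfert est $g\mapsto g^4$. Or $a^4=(a^2)^2=1$ et $b^4=1$, donc $g^4=1$ pour tout $g\in G$. Par suite $\ker\mathrm{V}_{G\rightarrow H_{i,4}}=G/G'$ en entier, et par Artin on obtient $\ker j_{\mathbf{k}\rightarrow\mathbf{K}_{i,4}}=\mathrm{C}_{\mathbf{k},2}$, ce qui �tablit l'assertion 2. Le seul point d�licat est la r�duction initiale de la formule de transfert au cas ab�lien, mais celle-ci est standard; une fois acquise, toutes les v�rifications sont des calculs triviaux dans le groupe ab�lien d'ordre $8$.
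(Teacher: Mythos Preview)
Your proof is correct. The key observation that for an abelian group the transfer $\mathrm{V}_{G\to H}$ is simply $g\mapsto g^{[G:H]}$ is valid (the identity $ft=[G:H]$ follows from $[G:H]=[G:\langle g\rangle H]\cdot[\langle g\rangle H:H]$), and the subsequent kernel computations in the group of order~$8$ are routine.

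Your route differs from the paper's. The paper gives a one-line proof: since $G$ is abelian the $2$-class field tower of $\mathbf{k}$ terminates at $\mathbf{k}_2^{(1)}$, and then a theorem from Janusz \cite[p.~193]{Jan73} (essentially the computation of the capitulation kernel for an unramified abelian extension inside the Hilbert class field) is invoked as a black box. Your argument, by contrast, is fully self-contained: it unwinds the transfer formula \eqref{JNT7} directly in the abelian case and reads off the answer. The paper's approach is shorter on the page but hides the actual mechanism behind an external reference; yours makes the mechanism explicit and requires nothing beyond what is already developed in the paper. In terms of content the two arguments are ultimately the same computation --- the theorem in Janusz amounts precisely to the fact that the transfer to an index-$n$ subgroup of an abelian group is the $n$-th power map --- but your presentation is more transparent for a reader who does not have \cite{Jan73} at hand.
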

\begin{proof}
Comme La tour des $2$-corps de classes de Hilbert de $\mathbf{k}$ s'arrête en $\mathbf{k}_2^{(1)}$, il suffit d'appliquer  \cite[théorème p.193]{Jan73}.
\end{proof}
\section{Application}
Dans cette section, on va donner deux applications de quelques
résultats précédents, l'un pour un corps quadratique réel, l'autre
pour un corps biquadratique imaginaire. Soit
$\mathbf{k}=\QQ(\sqrt{p_1p_2p_2})$ un corps quadratique réel tels
que les $p_i$ sont des nombres premier $\equiv 1\pmod 4$ et le
$2$-groupe de classes de $\mathbf{k}$ est de type $(2, 4)$.
Rappelons que dans \cite{Benj99} E. Benjamin n'à pas donné une
précision sur la structure du groupe
$G=\mathrm{Gal}(\mathbf{k}_2^{(2)}/\mathbf{k})$, si la capitulation
est de type $(2A , 2A, 2A)$. Pour nous, nous donnons une condition
nécessaire et suffisante pour que le groupe $G$ soit non
métacyclique, mais avant ça, établissons le lemme suivant:
\begin{lem}
Soient $p$, $q$ et $r$ des nombres premiers distincts
deux-à-deux tels que $p\equiv q\equiv r\equiv 1\pmod 4$, $\left(\dfrac{p}{q}\right)=1$ et $l$ est le $2$-rang du $2$-groupe de classes de $\QQ(\sqrt{p}, \sqrt{qr})$. Et nous avons les propriétés suivantes:
\begin{enumerate}[\rm\indent (1)]
  \item Si $\left(\dfrac{p}{r}\right)=1$, alors $l=\left\{
                                                  \begin{array}{ll}
                                                    3, & \hbox{si $\left(\dfrac{p}{q}\right)_4=\left(\dfrac{q}{p}\right)_4$ et $\left(\dfrac{p}{r}\right)_4=\left(\dfrac{r}{p}\right)_4$;} \\
                                                    2, & \hbox{si non.}
                                                  \end{array}
                                                \right.$
  \item Si $\left(\dfrac{p}{r}\right)=-1$, alors $l=\left\{
                                                  \begin{array}{ll}
                                                    2, & \hbox{si $\left(\dfrac{p}{q}\right)_4=\left(\dfrac{q}{p}\right)_4$;} \\
                                                    1, & \hbox{si non.}
                                                  \end{array}
                                                \right.$
\end{enumerate}
\end{lem}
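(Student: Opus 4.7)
The plan is to compute $l$ by combining genus theory on the three quadratic subfields of $F := \QQ(\sqrt{p}, \sqrt{qr})$ with R\'edei-type quartic residue conditions.

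First, I would identify the three quadratic subfields of $F$: namely $k_1 = \QQ(\sqrt{p})$, $k_2 = \QQ(\sqrt{qr})$, and $k_3 = \QQ(\sqrt{pqr})$. Since each of $p$, $q$, $r \equiv 1 \pmod 4$, all three subfields are real, and classical genus theory gives: the $2$-rang of $\mathrm{C}_{k_1}$ is $0$ (one ramified prime), that of $\mathrm{C}_{k_2}$ is $1$ (two ramified primes), and that of $\mathrm{C}_{k_3}$ is $2$ (three ramified primes). Pushing the ideal classes of the $k_i$ up to $F$ produces elements of $\mathrm{C}_F$, and the Kuroda-type formula $h(F) = \tfrac{1}{4} q(F) \prod_i h(k_i)$ supplies an upper bound on $l$.

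Second, I would apply the ambiguous class number formula to $F/k_1$. The primes of $k_1$ that ramify in $F$ are those above $q$ and $r$; the hypothesis $(p/q)=1$ forces $q$ to split in $k_1$, while the sign of $(p/r)$ determines whether $r$ splits or remains inert. This bifurcates the argument into the two cases of the statement and produces different counts of ambiguous ideal classes, hence different ceilings for $l$.

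Third, I would invoke R\'edei's criterion to decide which expected ambiguous classes actually survive in $\mathrm{C}_F$ rather than capitulating. The equalities $\bigl(\tfrac{p}{q}\bigr)_4 = \bigl(\tfrac{q}{p}\bigr)_4$ and $\bigl(\tfrac{p}{r}\bigr)_4 = \bigl(\tfrac{r}{p}\bigr)_4$ are precisely R\'edei's conditions for the associated genus-theoretic contributions to the $4$-rank to be nonzero; when either symbol equality fails, the corresponding generator of the ambiguous part becomes a square in $\mathrm{C}_F$, reducing $l$ by one. Combining this with the ceilings from step two yields the four sub-cases.

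The main obstacle will be the precise determination of the unit index $q(F) \in \{1,2,4\}$ in Kuroda's formula, together with the bookkeeping needed to separate the contributions to $l$ coming from genus theory (which are always present) from those coming from quartic residue conditions (which can be destroyed). In particular, the case $(p/r)=-1$ with $\bigl(\tfrac{p}{q}\bigr)_4 \neq \bigl(\tfrac{q}{p}\bigr)_4$, where $l$ drops to $1$, requires verifying that one of the two apparent genus-theoretic generators actually lies in the principal class.
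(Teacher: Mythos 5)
Your step two is the right engine, and it is in fact the same mathematics the paper relies on: the paper's proof is a one-line citation of Th\'eor\`eme 2 of \cite{AzMo01}, which packages Chevalley's ambiguous class number formula for the quadratic extension $F=\QQ(\sqrt p,\sqrt{qr})$ over $\QQ(\sqrt p)$ (whose class number is odd) into the statement $l=t-1-e$, where $t$ is the number of primes of $\QQ(\sqrt p)$ ramified in $F$ (here $t=4$ or $3$ according as $\left(\frac{p}{r}\right)=1$ or $-1$) and $2^{e}=[E_{\QQ(\sqrt p)}:E_{\QQ(\sqrt p)}\cap N_{F/\QQ(\sqrt p)}F^{\times}]$. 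By contrast, your step one is a detour: Kuroda's formula and the unit index $q(F)$ control class numbers, not the $2$-rank, and none of that is needed.

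The genuine gap is in your step three. The quartic conditions do not enter through R\'edei's $4$-rank criterion, and the mechanism ``each failed symbol equality makes one ambiguous generator a square, reducing $l$ by one'' is wrong as stated: taken literally it would give $l=1$ in case (1) when both equalities fail, whereas the lemma asserts $l=2$ there. What actually happens is that the correction $e$ is $0$ or $1$, never $2$, because $E_{\QQ(\sqrt p)}=\langle -1,\varepsilon_p\rangle$ and $-1$ is always a norm from $F$ (it is a local norm at every ramified prime since $q\equiv r\equiv 1\pmod 4$, and Hasse's norm theorem applies to the quadratic extension $F/\QQ(\sqrt p)$); so $l=t-1$ if $\varepsilon_p$ is a norm and $l=t-2$ otherwise. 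The quartic symbols then enter via Scholz's reciprocity law: when $q$ splits, $\varepsilon_p$ is a local norm at the primes above $q$ iff $\left(\frac{p}{q}\right)_4=\left(\frac{q}{p}\right)_4$, and likewise at $r$ when $\left(\frac{p}{r}\right)=1$. Finally, in case (2) you must check that the local condition at the ramified prime above the inert prime $r$ is automatically satisfied (it is, because $N\varepsilon_p=-1$ and $r\equiv 1\pmod 4$ force $\varepsilon_p$ to be a square in the residue field $\mathbb{F}_{r^2}$), so only the condition at $q$ remains; this is exactly the point your last sentence flags but does not resolve. With these corrections your plan amounts to reproving \cite{AzMo01}, Th\'eor\`eme 2, which the paper simply invokes.
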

\begin{proof}
C'est une consequence immediate du théorème 2 de \cite{AzMo01}, par exemple si $\left(\dfrac{p}{r}\right)=1$, nous trouvons que il existe $4$ idéaux premiers de $\QQ(\sqrt{p})$, qui se ramifient dans $\QQ(\sqrt{p}, \sqrt{qr})$, ainsi $l=3-e$, avec $e=0$ ou $1$; plus précisément on a: $e=0$ si, et seulement si, $\left(\dfrac{p}{q}\right)_4=\left(\dfrac{q}{p}\right)_4$ et $\left(\dfrac{p}{r}\right)_4=\left(\dfrac{r}{p}\right)_4$.
\end{proof}
\begin{thm}
Soient $p_1$, $p_2$ et $p_3$ des nombres premiers distincts
deux-à-deux tels que $p_1\equiv p_2\equiv p_3\equiv 1\pmod 4$. Si
le $2$-groupe de classes de $\mathbf{k}=\QQ(\sqrt{p_1p_2p_2})$ est
de type $(2, 4)$, alors le groupe
$G=\mathrm{Gal}(\mathbf{k}_2^{(2)}/\mathbf{k})$ est non métacyclique
si et seulement si
\begin{enumerate}[\rm\indent (1)]
  \item $\left(\dfrac{p_i}{p_j}\right)=\left(\dfrac{p_i}{p_k}\right)=1$ et
  \item $\left(\dfrac{p_i}{p_j}\right)_4=\left(\dfrac{p_j}{p_i}\right)_4$ et
$\left(\dfrac{p_i}{p_k}\right)_4=\left(\dfrac{p_k}{p_i}\right)_4$.
\end{enumerate}
\end{thm}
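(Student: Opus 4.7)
Le plan est de r�duire ce th�or�me, via le Th�or�me \ref{005}, � un calcul du $2$-rang du $2$-groupe de classes d'une extension biquadratique explicite, puis d'appliquer le lemme pr�c�dent. En effet, sous l'hypoth�se que $\mathrm{C}_{\mathbf{k}, 2}$ est de type $(2, 4)$, le Th�or�me \ref{005} affirme que $G$ est non m�tacyclique si et seulement si le $2$-groupe de classes de $\mathbf{K}_{3, 2}$ est de type $(2, 2, 2)$, c'est-�-dire de $2$-rang �gal � $3$.

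Je commencerais par rappeler que les trois extensions quadratiques non ramifi�es de $\mathbf{k} = \QQ(\sqrt{p_1p_2p_3})$ sont les corps biquadratiques $\mathbf{k}(\sqrt{p_\ell}) = \QQ(\sqrt{p_\ell}, \sqrt{p_mp_n})$ pour $\{\ell, m, n\} = \{1, 2, 3\}$. La contrainte que $\mathrm{C}_{\mathbf{k}, 2}$ est de type $(2, 4)$ impose, via la th�orie du genre pour $\mathbf{k}$, l'existence d'un unique indice $i\in\{1, 2, 3\}$ tel que $\left(\dfrac{p_i}{p_j}\right) = \left(\dfrac{p_i}{p_k}\right) = 1$ (o� $\{i, j, k\} = \{1, 2, 3\}$); c'est pr�cis�ment pour cet indice que le sous-groupe norme de $\mathbf{k}(\sqrt{p_i})$ dans $\mathrm{C}_{\mathbf{k}, 2}$ est de type $(2, 2)$, alors que les sous-groupes norme associ�s aux deux autres extensions sont cycliques d'ordre $4$. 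Ainsi $\mathbf{K}_{3, 2} = \QQ(\sqrt{p_i}, \sqrt{p_jp_k})$.

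Une fois cette identification obtenue, j'appliquerais le lemme pr�c�dent avec $p = p_i$, $q = p_j$, $r = p_k$. Sous la condition $(1)$ du th�or�me, c'est-�-dire $\left(\dfrac{p}{q}\right) = \left(\dfrac{p}{r}\right) = 1$, le cas $(1)$ du lemme donne que le $2$-rang du $2$-groupe de classes de $\mathbf{K}_{3, 2}$ vaut $3$ si et seulement si la condition $(2)$ du th�or�me est satisfaite. Si l'une des �galit�s de $(1)$ �choue, le lemme fournit un $2$-rang au plus $2$, et le Th�or�me \ref{005} emp�che alors $G$ d'�tre non m�tacyclique. La combinaison de ces deux observations livre l'�quivalence voulue.

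Le point d�licat sera l'identification pr�cise de $\mathbf{K}_{3, 2}$ parmi les trois extensions quadratiques non ramifi�es de $\mathbf{k}$: il faudra montrer qu'une seule de ces extensions poss�de un sous-groupe norme de type $(2, 2)$, et que celle-ci correspond � l'unique indice $i$ distingu� par les conditions de Legendre. Cette �tape repose sur une analyse fine de la structure du $2$-groupe de classes de $\mathbf{k}$ via la th�orie du genre (voir \cite{AzMo01}) coupl�e � la correspondance fournie par la th�orie du corps de classes entre sous-groupes de $\mathrm{C}_{\mathbf{k}, 2}$ d'indice $2$ et extensions quadratiques non ramifi�es de $\mathbf{k}$; c'est l'essentiel du travail technique.
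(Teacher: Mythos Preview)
Your proposal is correct and follows the same overall strategy as the paper: reduce via Th\'eor\`eme~\ref{005} to the $2$-rang of $\mathrm{C}_{\mathbf{K}_{3,2},\,2}$, identify $\mathbf{K}_{3,2}$ explicitly among the three quadratic unramified extensions of $\mathbf{k}$, then apply the preceding lemma.

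The only substantive difference lies in how $\mathbf{K}_{3,2}$ is pinned down. The paper observes that, since $\mathrm{C}_{\mathbf{k},2}$ is of type $(2,4)$, there exists a cyclic unramified extension of degree~$4$ of $\mathbf{k}$, necessarily totally real; it then invokes R\'edei--Reichardt and Scholz \cite{Re34,Sc34} directly to conclude that condition~(1) holds for some labeling $\{i,j,k\}$ (together with the extra relation $(p_i/p_jp_k)_4=(p_jp_k/p_i)_4$), and that $\mathbf{K}_{3,2}=\QQ(\sqrt{p_i},\sqrt{p_jp_k})$. Your route via the shape of the norm subgroup is an equivalent characterization of $\mathbf{K}_{3,2}$, but when made precise it will require the same R\'edei--Reichardt input rather than elementary genus theory alone: the statement that the norm group of $\mathbf{k}(\sqrt{p_\ell})$ is of type $(2,2)$ exactly when $(p_\ell/p_m)=(p_\ell/p_n)=1$ is a $4$-rank statement, not a $2$-rank one. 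Your final paragraph rightly flags this identification as the essential technical step. Note also that, since condition~(1) is in fact forced by the hypothesis that $\mathrm{C}_{\mathbf{k},2}\simeq(2,4)$, your contingency ``si l'une des \'egalit\'es de~(1) \'echoue'' never actually arises; the paper treats~(1) as an automatic consequence and the equivalence as really being with~(2).
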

\begin{proof}
Nous connaissons que le $2$-groupe de classes de $\mathbf{k}=\QQ(\sqrt{p_1p_2p_2})$ est de type $(2, 4)$, alors $\mathbf{k}$ admet une extension cyclique de degré $4$ et non ramifiée pour les idéaux finis et infinis, donc cette extension doit être réelle, ce qui nous permet d'utiliser les résultats de \cite{Re34,Sc34}  et conclure que
\begin{equation}\label{014}
    \left(\dfrac{p_i}{p_j}\right)=\left(\dfrac{p_i}{p_k}\right)=1 \text{ et } \left(\dfrac{p_i}{p_jp_k}\right)_4=\left(\dfrac{p_jp_k}{p_i}\right)_4.
\end{equation}
Dans cette situation $\mathbf{K_{3,  2}}=\QQ(\sqrt{p_i}, \sqrt{p_jp_k})$. Le théorème \ref{005} entraîne que $G$ est non métacyclique si, et seulement si, le $2$-rang du $2$-groupe de classes de $\mathbf{K_{3,  2}}$ est égal à $3$. Le lemme précédent montre que c'est équivalent à $$\left(\dfrac{p_i}{p_j}\right)_4=\left(\dfrac{p_j}{p_i}\right)_4\text{ et }
\left(\dfrac{p_i}{p_k}\right)_4=\left(\dfrac{p_k}{p_i}\right)_4.$$
\end{proof}
Si $G$ est un groupe métacyclique, on n'a pas les derniers égalités. La relation \eqref{014} nous donne que \begin{equation}\label{015}
\left(\dfrac{p_i}{p_j}\right)_4=-\left(\dfrac{p_j}{p_i}\right)_4\text{ et }
\left(\dfrac{p_i}{p_k}\right)_4=-\left(\dfrac{p_k}{p_i}\right)_4.
\end{equation}

\begin{cor}
Soient $i$, $j$ et $k$ les entiers de la relation \eqref{014}. Si le $2$-groupe de classes de $\mathbf{k}=\QQ(\sqrt{p_1p_2p_2})$ est
de type $(2, 4)$ et le groupe $G=\mathrm{Gal}(\mathbf{k}_2^{(2)}/\mathbf{k})$
est  métacyclique, alors
\begin{enumerate}[\indent\rm(a)]
\item  $G$ est abélien si et seulement si $\left(\dfrac{p_j}{p_k}\right)=-1$.
\item  $G$ est non-abélien et non-modulaire si et seulement si $\left(\dfrac{p_j}{p_k}\right)=1$.
\end{enumerate}
\end{cor}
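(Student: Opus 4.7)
L'hypoth\`ese $G$ m\'etacyclique ne laisse que trois structures possibles pour $G$: ab\'elien, modulaire, ou m\'etacyclique non modulaire non ab\'elien. Les th\'eor\`emes~\ref{007}, \ref{003} et le corollaire~\ref{2:017} caract\'erisent ces trois cas par la structure du $2$-groupe de classes de $\mathbf{K}_{3, 2}=\QQ(\sqrt{p_i},\sqrt{p_jp_k})$: respectivement type $(2, 2)$ d'ordre $4$, type $(2, 4)$ d'ordre $8$, ou $2$-rang $2$ d'ordre $\geq 16$. Il s'agit donc de relier cette structure au symbole $\left(\frac{p_j}{p_k}\right)$, et surtout d'exclure automatiquement le cas modulaire dans la dichotomie propos\'ee par le corollaire.

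La m\'ethode que je propose consiste \`a appliquer la formule des classes de Kuroda au corps biquadratique $\mathbf{K}_{3, 2}/\QQ$ dont les sous-corps quadratiques sont $\QQ(\sqrt{p_i})$, $\QQ(\sqrt{p_jp_k})$ et $\mathbf{k}=\QQ(\sqrt{p_ip_jp_k})$:
$$h_2(\mathbf{K}_{3, 2}) = \frac{q(\mathbf{K}_{3, 2})}{4}\, h_2(\QQ(\sqrt{p_i}))\, h_2(\QQ(\sqrt{p_jp_k}))\, h_2(\mathbf{k}),$$
o\`u $q(\mathbf{K}_{3, 2}) \in \{1, 2, 4\}$ est l'indice d'unit\'es. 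On a $h_2(\QQ(\sqrt{p_i}))=1$ par th\'eorie des genres (un seul premier ramifi\'e $\equiv 1\pmod 4$) et $h_2(\mathbf{k})=8$ par hypoth\`ese. Pour le facteur d\'ecisif $h_2(\QQ(\sqrt{p_jp_k}))$, la th\'eorie des genres jointe au crit\`ere de R\'edei--Scholz donne $h_2(\QQ(\sqrt{p_jp_k}))=2$ lorsque $\left(\frac{p_j}{p_k}\right)=-1$ et $4\mid h_2(\QQ(\sqrt{p_jp_k}))$ lorsque $\left(\frac{p_j}{p_k}\right)=1$. L'indice $q(\mathbf{K}_{3, 2})$ se contr\^ole via les normes des unit\'es fondamentales des sous-corps quadratiques, ce qui est grandement simplifi\'e par les congruences $p_i,p_j,p_k\equiv 1\pmod 4$ et par les relations~\eqref{014} et~\eqref{015} d\'ej\`a impos\'ees par l'hypoth\`ese m\'etacyclique.

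On conclut alors comme suit: si $\left(\frac{p_j}{p_k}\right)=-1$ on aboutit \`a $h_2(\mathbf{K}_{3, 2})=4$ apr\`es calcul de $q(\mathbf{K}_{3, 2})$, et le th\'eor\`eme~\ref{007} donne $G$ ab\'elien; si $\left(\frac{p_j}{p_k}\right)=1$, on aboutit \`a $h_2(\mathbf{K}_{3, 2})\geq 16$, et le corollaire~\ref{2:017} donne $G$ m\'etacyclique non modulaire non ab\'elien. La principale difficult\'e r\'eside en deux points d\'elicats: d'une part, le contr\^ole rigoureux de l'indice $q(\mathbf{K}_{3, 2})$, qui pourrait autrement rendre l'\'egalit\'e finale ambigu\"e d'un facteur multiplicatif; d'autre part et surtout, l'exclusion de la valeur interm\'ediaire $h_2(\mathbf{K}_{3, 2})=8$ (cas modulaire) lorsque $\left(\frac{p_j}{p_k}\right)=1$. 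Cette exclusion repose sur le fait que les conditions~\eqref{015} imposent aux sym\-boles biquadratiques de contribuer une classe suppl\'ementaire dans $\mathbf{K}_{3, 2}$, ce qui porte l'ordre \`a au moins $16$ et \'evite donc la valeur $8$ caract\'eristique du cas modulaire, dans l'esprit des calculs d\'ej\`a men\'es \`a la preuve du th\'eor\`eme~\ref{004}.
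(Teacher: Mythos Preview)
Your route through Kuroda's formula on $\mathbf{K}_{3,2}$ is genuinely different from the paper's, but it has two real gaps that you yourself flag and do not close.

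First, the unit index $q(\mathbf{K}_{3,2})$ is not computed. In case~(a) you need $q=1$ exactly to land on $h_2(\mathbf{K}_{3,2})=4$; in case~(b) you need $q\geq 2$ whenever $h_2(\QQ(\sqrt{p_jp_k}))=4$. Saying that $q$ ``se contr\^ole via les normes des unit\'es fondamentales'' is not a proof: the norms of $\varepsilon_{p_jp_k}$ and $\varepsilon_{p_ip_jp_k}$ are not determined by the hypotheses, and the Kubota-type analysis of which products become squares in the biquadratic field is non-trivial here.

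Second, and more seriously, your exclusion of the modular case in~(b) is not justified. The relations~\eqref{015} constrain the biquadratic symbols involving $p_i$, but the $8$-rank of $\QQ(\sqrt{p_jp_k})$ depends on symbols between $p_j$ and $p_k$ only, on which~\eqref{015} says nothing. So the claim that~\eqref{015} ``contribue une classe suppl\'ementaire dans $\mathbf{K}_{3,2}$'' is unsupported, and the reference to the proof of Theorem~\ref{004} (which is a transfer computation, not a class-number estimate) does not help.

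The paper sidesteps both problems. For~(a) it simply quotes \cite[Th\'eor\`eme~1]{BeLeSn98}: when $\left(\frac{p_j}{p_k}\right)=-1$ the $2$-class field tower of $\mathbf{k}$ is abelian, hence $G$ is abelian. For~(b) the key move is to \emph{change the field}: instead of pushing harder on $\mathbf{K}_{3,2}$, one looks at $\QQ(\sqrt{p_j},\sqrt{p_ip_k})$, which is one of $\mathbf{K}_{1,2}$ or $\mathbf{K}_{2,2}$. The preceding lemma with $(p,q,r)=(p_j,p_i,p_k)$ applies because $\left(\frac{p_j}{p_i}\right)=\left(\frac{p_j}{p_k}\right)=1$, and~\eqref{015} gives $\left(\frac{p_j}{p_i}\right)_4\neq\left(\frac{p_i}{p_j}\right)_4$, so the $2$-rank of its class group equals $2$. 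Thus $G$ has a maximal subgroup $H$ with $H/G'$ cyclic of order $4$ and $d(H)=2$; but for $G$ abelian or modular the subgroups $H_{1,2}$ and $H_{2,2}$ are cyclic (rank~$1$), so $G$ is neither. This rank argument on a \emph{different} unramified extension is the idea your plan is missing.
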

\begin{proof}
Si $i$, $j$ et $k$ sont les entiers de la relation \eqref{014} et si le $2$-groupe de classes de $\mathbf{k}=\QQ(\sqrt{p_1p_2p_2})$ est de type $(2, 4)$, alors d'après \cite{Ka76} on a deux $\left(\dfrac{p_j}{p_k}\right)=-1$ ou $\left(\dfrac{p_j}{p_k}\right)=1$. Dans le premier cas la tour des $2$-corps de classes de Hilbert de $\mathbf{k}$ s'arrête en $\mathbf{k}_2^{(1)}$ (\cite[théorème 1]{BeLeSn98}). Par conséquent $G$ est un groupe abélien. Si $\left(\dfrac{p_j}{p_k}\right)=1$, la relation \eqref{015} et le lemme précédent montre que le $2$-rang du $2$-groupe de classes de $\QQ(\sqrt{p_j}, \sqrt{p_ip_k})$ est égal à $2$. C'est-à-dire que $G$ admet un sous-groupe maximal de rang $2$, alors est non-abélien et non-modulaire.
\end{proof}
Dans la suite, nous donnons une autre application, cette fois nous prenons $\mathbf{k}=\QQ(\sqrt{2p}, i)$ et supposons que son $2$-groupe de classes est de type $(2, 4)$. Le théorème suivant est presque le résultats principal dans \cite{AzTa08}. Ici il devient une bonne application de résultats précédents.
\begin{thm}
Soit $\mathbf{k}=\QQ(\sqrt{2p}, i)$ avec $p$ un nombre premier tel
que $p\equiv 1\mod 8$ et $\left(\dfrac{2}{p}\right)_4=\left(\dfrac{p}{2}\right)_4=-1$, $2^n$ le
$2$-nombre de classes de $\QQ(\sqrt{-p})$.
Alors $G=\mathrm{G}al(\mathbf{k}_2^{(2)}/\mathbf{k}$ est un groupe
métacyclique non-modulaire, de plus $$G=\langle a, b :\, a^{2^{n}}
= 1,\, b^4 = 1,\, a^b = a^{-1+2^{n-1}}\rangle.$$
\end{thm}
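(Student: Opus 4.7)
The plan is to apply the classification results of Sections~\ref{JNT1} and~\ref{009} to the specific biquadratic field $\mathbf{k}=\QQ(\sqrt{2p},i)$. As a first step, I would verify that $\mathrm{C}_{\mathbf{k},2}$ is of type $(2,4)$: under the hypotheses $p\equiv 1\pmod 8$ and $\left(\frac{2}{p}\right)_4=\left(\frac{p}{2}\right)_4=-1$, this follows by combining Kuroda's class number formula for $\mathbf{k}/\QQ$ with the known structure of the 2-class groups of the three quadratic subfields $\QQ(i)$, $\QQ(\sqrt{2p})$, $\QQ(\sqrt{-2p})$ and of $\QQ(\sqrt{-p})$, together with a standard unit-index computation. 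Once this type is established, the whole framework of Section~\ref{009} is available, and in particular the three unramified quadratic extensions $\mathbf{K}_{1,2},\mathbf{K}_{2,2},\mathbf{K}_{3,2}$ and the three unramified quartic extensions $\mathbf{K}_{1,4},\mathbf{K}_{2,4},\mathbf{K}_{3,4}$ are well defined.

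The second step is to identify the three unramified quadratic extensions explicitly. Since $p\equiv 1\pmod 4$ splits in $\QQ(i)$ as $p=\pi\overline{\pi}$, the natural generators of the unramified quadratic extensions of $\mathbf{k}$ come from adjoining square roots of units or of $\pi$, $\overline{\pi}$; in particular $\mathbf{K}_{3,2}$, the extension associated to the subgroup $H_{3,2}/G'$ of type $(2,2)$, is the multiquadratic field containing $\QQ(\sqrt{-p})$. This identification is important because it allows the 2-class number of $\QQ(\sqrt{-p})$ to enter the computation.

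The crucial computational step is to determine the $2$-class group of $\mathbf{K}_{3,2}$. Using that $\mathrm{C}_{\QQ(\sqrt{-p}),2}$ is cyclic of order $2^n$, a Kuroda-type formula applied to the multiquadratic extension $\mathbf{K}_{3,2}/\QQ$, plus the hypothesis $\left(\frac{2}{p}\right)_4=\left(\frac{p}{2}\right)_4=-1$, should yield that $\mathrm{C}_{\mathbf{K}_{3,2},2}$ is of type $(2,2^{n})$, hence of rank $2$ and order $2^{n+1}>8$ (since $p\equiv 1\pmod 8$ forces $n\geq 2$). Theorem~\ref{002} then gives immediately that $G$ is metacyclic non-modular, so $G\in\{G_1,G_2,G_3,G_4\}$. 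To single out $G=G_2$, I would invoke Corollary~\ref{016}: verifying $h(\mathbf{K}_{i,4})=h(\mathbf{K}_{3,2})/2$ for $i=1,2,3$ (again via Kuroda formulas for the intermediate quartic extensions of $\mathbf{k}$) together with the fact that only $\mathfrak{d}^2$ and its square capitulate in $\mathbf{K}_{3,2}$ forces $G\simeq G_2$. Matching against Table~\ref{Tabl3} with $|G|=2^{n+2}$ then yields the presentation $G=\langle a,b : a^{2^n}=1,\,b^4=1,\,a^b=a^{-1+2^{n-1}}\rangle$.

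The main obstacle is the explicit computation of $\mathrm{C}_{\mathbf{K}_{3,2},2}$ and of the class numbers $h(\mathbf{K}_{i,4})$: these require careful bookkeeping of units in degree-$8$ multiquadratic extensions and repeated use of Kuroda's formula, which is where almost all of the arithmetic content of the proof lies. Once these class-number data are in hand, the passage to the isomorphism type of $G$ is purely group-theoretic and is a direct application of the machinery of Section~\ref{JNT1}.
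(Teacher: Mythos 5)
Your overall route is the one the paper itself follows: check that $\mathrm{C}_{\mathbf{k},2}$ is of type $(2,4)$, show that the $2$-class group of $\mathbf{K}_{3,2}$ has rank $2$ and order $>8$ so that Theorem \ref{002} (or Corollary \ref{2:017}) yields that $G$ is metacyclic non-modular, then combine Corollary \ref{016} with capitulation data to single out $G_2$ and read the presentation off Table \ref{Tabl3}. The only real difference is that the paper does not redo any arithmetic: the $(2,4)$ structure, the identification of $\mathbf{K}_{3,2}$ (the quadratic layer $\mathbf{k}(\sqrt2)$ inside the cyclic quartic unramified extension), the rank-$2$ statement, and the capitulation and class-number inputs are all quoted from \cite{AzTa08} (Theorem 9, Theorem 12, Proposition 9), whereas you propose to rederive them by Kuroda-type computations.

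Two points in your sketch are genuine gaps. First, your justification ``order $2^{n+1}>8$ since $p\equiv 1\pmod 8$ forces $n\geq 2$'' is not enough: $n=2$ gives exactly $8$, and then Theorem \ref{002} does not apply --- a rank-$2$ class group of order $8$, i.e.\ of type $(2,4)$, would put you in Corollary \ref{2:019}, where $G$ could be modular or $G_1$. What is needed is $n\geq 3$, and this comes not from $p\equiv 1\pmod 8$ alone but from the hypothesis $\left(\dfrac{2}{p}\right)_4=\left(\dfrac{p}{2}\right)_4=-1$, which forces $8\mid h_2(\QQ(\sqrt{-p}))$; the paper uses $n\geq 3$ explicitly. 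Second, the hypothesis of Corollary \ref{016} --- that only $\mathfrak{d}^2$ (and its square) capitulates in $\mathbf{K}_{3,2}$ --- cannot be extracted from class-number computations alone: the groups $G_1$, $G_2$, $G_3$ all give $H_{3,2}/H_{3,2}'$ of type $(2,2^{N-2})$ and $H_{1,4}=\langle a\rangle$ cyclic of the same order, so Kuroda-style bookkeeping of $h_2(\mathbf{K}_{i,2})$ and $h_2(\mathbf{K}_{i,4})$ does not separate them. To exclude $G_1$ and $G_3$ you must actually compute the capitulation kernel in $\mathbf{K}_{3,2}$ (compare Theorem \ref{004}), which is an arithmetic computation with units and ideals of $\mathbf{K}_{3,2}$; in the paper this is precisely what Theorem 12 and Proposition 9 of \cite{AzTa08} supply, and your proposal gives no route to it.
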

\begin{proof}Notons que le $2$-groupe de classes de $\mathbf{k}$ est de type $(2, 4)$, car $p\equiv 1\mod 8$ et $\left(\dfrac{2}{p}\right)_4=\left(\dfrac{p}{2}\right)_4=-1$. Nous avons construit dans \cite{AzTa08} une extensions cyclique d'ordre 4, non ramifiée sur $\mathbf{k}$ et contient le corps $\mathbf{k}(\sqrt{2})$. Avec nos notations c'est le corps $\mathbf{K_{3,  2}}$. D'après \cite[théorème 9]{AzTa08}, le $2$-rang du $2$-groupe de classes de $\mathbf{K_{3,  2}}$ est égal à $2$, alors le théorème \ref{002}, $G$ est un groupe métacyclique non-modulaire d'ordre $2^{n+2}$, car le $2$-nombre de classes de $\mathbf{K_{3,  2}}$ est égal à $2^{n+1}$ avec $n\geq 3$, donc l'ordre de $G$ est divisible par $32$. Enfin le théorème 12, la proposition 9 du \cite{AzTa08} et le corollaire \ref{016}, impliquent que $$G=\langle a, b :\, a^{2^{n}}
= 1,\, b^4 = 1,\, a^b = a^{-1+2^{n-1}}\rangle.$$
\end{proof}
\bibliographystyle{alpha}

\end{document}